\theoremstyle{plain}
\newtheorem{thm}{Theorem}[section]
\newtheorem{lem}[thm]{Lemma}
\newtheorem{cor}[thm]{Corollary}
\newtheorem{prop}[thm]{Proposition}
\newtheorem{conj}[thm]{Conjecture}
\newtheorem{stat}[thm]{Statement}
\theoremstyle{definition}
\newtheorem*{fact}{Fact}
\newtheorem{defn}{Definition} \setcounter{defn}{0}
\newtheorem{ex}[thm]{Exemple}
\newtheorem{rmk}[thm]{Remark}
\numberwithin{thm}{section}
\newcommand{\Ess}{{\rm EFin}}
\newcommand{\sect}{{\rm Section}}
\newcommand{\Sch}{{\rm Sch}}
\newcommand{\Hom}{{\rm Hom}}
\newcommand{\Res}{{\rm Res}}
\newcommand{\PI}{\rm PI}
\newcommand{\Spec}{{\rm Spec \,}}
\newcommand{\Aff}{{\rm Aff}}
\newcommand{\Gal}{{\rm Gal}}
\newcommand{\Aut}{{\rm Aut}}
\newcommand{\A}{{\mathbb A}}
\newcommand{\C}{{\mathbb C}}
\newcommand{\F}{{\mathbb F}}
\newcommand{\G}{{\mathbb G}}
\newcommand{\N}{{\mathbb N}}
\renewcommand{\P}{{\mathbb P}}
\newcommand{\Q}{{\mathbb Q}}
\newcommand{\R}{{\mathbb R}}
\newcommand{\Y}{{\mathbb Y}}
\newcommand{\Z}{{\mathbb Z}}
\newcommand{\Rep}{\text{\sf Rep}}
\newcommand{\id}{{\rm id\hspace{.1ex}}}
\begin{document}
\title{Nori's Fundamental Group over a  non-Algebraically Closed Field}

\author{ Lei Zhang }
 \address{
    Freie Universit\"at Berlin\\
    FB Mathematik und Informatik\\
    Arnimallee 3, Zimmer 112A
    14195 Berlin, Deutschland }
\email{l.zhang@fu-berlin.de}
\thanks{This work was supported by the European Research Council (ERC) Advanced Grant 0419744101 and the Einstein Foundation}
\date{\today}

\begin{abstract} Let $X$ be a connected reduced scheme over a field $k$,  $x\in X(k)$ be a $k$-rational point. M. V. Nori constructed in his Ph.D thesis a fundamental
group scheme $\pi^N(X,x)$ which generalizes A. Grothendieck's \'etale fundamental group $\pi_1^{\text{\'et}}(X,x)$ by including infinitesimal coverings.  However, Nori's fundamental group scheme carries little arithmetic information, and it behalves like the \'etale fundamental group only when  $k$ is algebraically closed. For example, if $X=\Spec(k)$, then Nori's fundamental
group scheme is always trivial while the \'etale
fundamental group
$\pi_1^{\rm\text{\'et}}(X,x)=\Gal(\bar{k}/k)$. In this paper, we study a slightly modified version of Nori's fundamental group scheme: We take $x$ to be a geometric point instead of a rational point. It is very suprising to the author that this tiny little modification of Nori's original definition brings a lot of arithmetic information and makes the fundamental group scheme more like $\pi_1^{\text{\'et}}(X,x)$. For example, now if we take $X=\Spec(k)$ again, with $\bar{x}\in X(\bar{k})$, then we get a profinite group scheme $\pi^N(k/k,\bar{x})$ over $k$ which takes $\Gal(\bar{k}/k)$ as its (pro-constant) quotient. Thus not only the Galois extensions, but also the purely inseparable extensions of $k$ are encoded into $\pi^N(k/k,\bar{x})$. We call $\pi^N(k/k,\bar{x})$ the \textit{Nori-Galois group} of $k$. We also studied the fundamental sequence which relates the Nori-Galois group to the geometric fundamental group. It turns out that the expected fundamental exact sequence is always a complex and exact on the right, but fails to be exact in the middle and on the left. Then we give conditions to determine when the exactness holds.
\end{abstract} 
\setcounter{section}{-1}
\maketitle
\section{Introduction}
Let  $X$ be a  connected scheme, $x\in X(\bar{k})
$ be a geometric point. Let ECov$(X)$ be the category of finite \'etale coverings of $X$. Then we have a fibre functor $F$ from ECov($X$) to the category of finite sets by sending any finite \'etale  covering $f: Y\to X$ to its fibres $f^{-1}(x)$. In \cite{SGA1}[Expos\'e V] A. Grothendieck proved that ECov($X$) together with the fibre functor $F$ forms a Galois category. Then he defined the \'etale fundamental group $\pi_1^{\text{\'et}}(X,x):=\Aut(F)$ to be the group of automorphisms of $F$. This is  a {profinite group},
i.e.  a topological group of the form $\varprojlim_{i\in I}G_i$ where $I$ is a small cofiltered category and $G_i$ is a finite group for each $i\in I$. This profinite group classifies all torsors under finite   groups, in particular when $G$ is a finite abelian group we have $H_{\text{\'et}}^1(X,G)=\Hom_{\text{cont}}(\pi_1^{\text{\'et}}(X,x),G)$. If $X=\Spec(k)$ and $x\in X(\bar{k})$ corresponds to the field extension $k\subseteq \bar{k}$, then $I$ can be chosen as the set of finite Galois sub-extensions of $k\subseteq \bar{k}$, and for each  $i=(k\subseteq K)\in I$, $G_i:=\Gal(K/k)$, so $\pi_1^{\text{\'et}}(X,x)=\varprojlim_{i\in I}G_i$ is just
the absolute Galois group $\Gal(\bar{k}/k)$.

Let $X$ be a proper reduced connected  scheme over a 
field $k$, $x\in X(k)$ be a rational point. In \cite[Part I,
Chapter I]{Nori} M. V. Nori constructed a full subcategory $\Ess(X)\subseteq {\rm Vec}(X)$ of the category of vector bundles on $X$. Objects in $\Ess(X)$ are called essentially finite vector bundles. He proved that $\Ess(X)$ with the fibre functor $\omega$ from $\Ess(X)$ to the category of finite dimensional vector spaces sending $V\mapsto V|_x$ is a Tannakian category over $k$. Then he defined $\pi^N(X,x):=\Aut^{\otimes}(\omega)$ to be the group of $k$-linear tensor automorphisms of $\omega$. This is a profinite $k$-group scheme which classifies all $k$-pointed torsors over $X$ under finite $k$-group schemes.  In particular when $G$ is a finite abelian $k$-group scheme we have $H_{\text{fppf}}^1(X,G)=\Hom_{\text{grp.sch}}(\pi_1^{N}(X,x),G)$. However, in this construction the properness assumption is vital, it does not apply to non-proper schemes. To remedy this M. V. Nori introduced in \cite[Part I,
Chapter II]{Nori} another construction. For any  reduced connected  scheme $X$ over a
field $k$ with a rational point $x\in X(k)$, let $N(X/k,x)$ be the category of torsors under finite $k-$group schemes with a fixed
 $k$-point lying over $x$. Then 
Nori defined $\pi^N(X,x):=\varprojlim_{i\in N(X/k,x)}G_i$, where $G_i$ is the finite group scheme corresponding to the pointed torsor $i$. It is not hard to prove that this definition concides with the Tannakian one (See \cite[Part I,
Chapter I, Proposition 3.11]{Nori} and \ref{unicov} for an explaination).

There is a comparison between Grothendieck's fundamental group and Nori's fundamental group scheme: If $X$ is a connected reduced scheme over an algebraically closed field $k$ with a rational point $x\in X(k)$, then $\pi^N(X,x)(k)\cong \pi_1^{\text{\'et}}(X,x)$ as topological groups, where $\pi^N(X,x)(k)$ is equipped with the Zariski topology. This means, \textit{over an algebraically closed field}, Nori's fundamental group scheme is a generalization of Grothendieck's \'etale fundamental group. However, when the base field is not algebraically closed, Nori's definition is quite different from Grothendieck's: 

\begin{enumerate}\item If $X=\Spec(k)$, then Nori's fundamental
group scheme is always trivial while the \'etale
fundamental group
$\pi_1^{\rm\text{\'et}}(X,x)=\Gal(\bar{k}/k)$.
\item In \cite[Part I,
Chapter II, Proposition 5]{Nori} Nori proved that the fundamental group scheme satisfies base change by separable field extensions. But this does hold for the \'etale fundamental group.  Take the projective space for example, if we see $\pi_1^{\text{\'et}}(\P_{{\Q}}^n)$ as a profinite group scheme over $\Q$, then we have $\pi_1^{\text{\'et}}(\P_{{\Q}}^n)\times_{\Q}\bar{\Q}=\Gal(\bar{\Q}/\Q)\times_{\Q}\bar{\Q}\neq \{1\}=\pi_1^{\text{\'et}}(\P_{\bar{\Q}})$.
\item  By \cite[Chapter II, Proposition 4]{Nori}, $\pi^N(X,x_1)$ and $\pi^N(X,x_2)$ differ by an inner twist for different rational points $x_1,x_2\in X(k)$, and they only become isomorphic after base change to $\bar{k}$.
\end{enumerate}

The first two properties reveals that Nori's fundamental group scheme is in some sense a \textit{geometric fundamental group}, i.e. it is better designed  for schemes over an algebraically closed field.  In this paper, we are going to  study an arithematic variant of this fundamental group scheme which brings it closer to $\pi_1^{\text{\'et}}$. In Nori's second definition of the fundamental group scheme, instead of taking a rational point $x\in X(k)$, we take a geometric point $\bar{x}\in X(\bar{k})$. It is really surprising that this tiny little modification makes the fundamental group scheme contain extremely rich arithmetic information.

To simplify the study we first split the fundamental group scheme into several different parts and study each of them. Let $N(X/k,\bar{x})$ be as before, except that $x$ is now replaced by a geometric point $\bar{x}$, and denote $\pi^N(X/k,\bar{x})$ the group scheme $\varprojlim_{i\in N(X/k,\bar{x})}G_i$. Let $I_{\text{\'et}}(X/k,\bar{x})$ (resp. $I_{co}(X/k,\bar{x}), I_{lc}(X/k,\bar{x})$) be the full subcategory of $N(X/k,\bar{x})$
consisting of those pointed torsors whose group schemes are \'etale (resp. constant, local), and the corresponding fundamental
group is denoted by $\pi^E(X/k,\bar{x})$ (resp. $\pi^G(X/k,\bar{x})$, $\pi^L(X/k,\bar{x})$). Then according to \ref{comparison}, we have the following  canonical surjections:
     $$\xymatrix{&\pi^E(X/k,\bar{x})\ar@{->>}[r]&\pi^G(X/k,\bar{x})\\\pi^N(X/k,\bar{x})\ar@{->>}[ur]\ar@{->>}[dr]&&\\&\pi^L(X/k,\bar{x})&} $$
$$\xymatrix{\pi^N(X/k,\bar{x})\ar@{>>}[rr]&& \pi^E(X/k,\bar{x})\times_k\pi^L(X/k,\bar{x})}.$$
 In fact, $\pi^G(X/k,\bar{x})$ is nothing but a "group
    scheme version" of $\pi_1^{\text{\'et}}(X,\bar{x})$ (see \ref{constant2} (ii)).   Although $\pi^E(X/k,\bar{x})$ and $\pi_1^{\text{\'et}}(X,\bar{x})$ are all fundamental groups classifying {\'etale coverings}
 they are indeed largely different. For example  $\pi_{1}^{\text{\'et}}(\Spec(\R),\bar{x})=\Gal(\C/\R)=\Z/2\Z$ and the universal covering of $\Spec(\R)$ under $\pi_{1}^{\text{\'et}}(\Spec(\R),\bar{x})$ is $\Spec(\C)$, while we have

\begin{thm} (See \ref{R}). Let $\R$ be the field of real numbers, $\bar{x}:\Spec(\C)\to \Spec(\R)$ be the morphism corresponding to the natural inclusion $\R\subset \C$. Then $$\pi^N(\R/\R,\bar{x})=\pi^E(\R/\R,\bar{x})=\varprojlim_{n\in\N^+}\mu_{n,\R}$$ is an  infinite $\R-$group scheme, and the universal covering corresponding to  $\pi^E(\R/\R,\bar{x})$ is a non-Noetherian affine scheme with infinitely many connected components.
\end{thm}

Although the so defined fundamental group scheme is quite complicated, it does behalve well as an arithmetic fundamental group scheme:
\begin{prop}\label{field0} (See \ref{field}). Let $X=\Spec(k)$ be a field, $\bar{x}\in X(\bar{k})$ be the geometric point $k\subseteq \bar{k}$. Then:
\begin{enumerate}[(i)]
\item $\pi^L(k/k,\bar{x})=\{1\},\ \pi^N(k/k,\bar{x})=\pi^E(k/k,\bar{x})$,\\ when $k$ is a perfect field;
\item $\pi^E(k/k,\bar{x})=\{1\},\ \pi^N(k/k,\bar{x})=\pi^L(k/k,\bar{x})$,\\ when $k$ is a separably closed field;
\item $\pi^N(X/k,\bar{x})=\pi^E(X/k,\bar{x})=\pi^L(X/k,\bar{x})=\{1\}$,\\  when $X$ is $\A_{\bar{k}}^n$ with $k$ a field of characteristic 0 or $X$ is $\P_{\bar{k}}^n$ with $k$ a field of arbitrary characteristic.
\end{enumerate} \end{prop}
\noindent In \cite{Ro} you can find an interesting application of \ref{field0}.

The following is an analogue of \cite[Expos\'e X, Corollarie 1.8., pp. 204]{SGA1}:

\begin{prop} (See \ref{basechange}). Let $X$ be a scheme geometrically connected proper separable
over a field $k$, $k\subseteq l\subseteq l'$ be a sequence of field
extensions, where $l$ and $l'$ are algebraically closed fields. Let
$\bar{x}:\Spec(l')\to X$ be a geometric point. Then the following
natural map
$$\pi_{l}^{l'}:\pi^E(X\times_kl'/k,\bar{x})\longrightarrow\pi^E(X\times_kl/k,\bar{x})$$ is an isomorphism of $k$-group schemes.
\end{prop}

\noindent By contrast $\pi^L(X/k,\bar{x})$ (hence also $\pi^N(X/k,\bar{x})$) doesn't satisfy base change by algebraically closed field extensions. This can be deduced from a famous counterexample by Mehta and Subramanian in \cite{MS} which was used to show that base change by algebraically closed field extensions fails for Nori's original definition (See \ref{basechangeinf} for details).

The following theorem shows that the arithmetic fundamental group scheme we are considering here deserves the name \textit{fundamental group}.

\begin{prop} (See \ref{basepoint}).  Let $X$ be any connected reduced scheme over $k$, $\bar{x}_1: \Spec(\bar{l}_1)\to X$ and $\bar{x}_2: \Spec(\bar{l}_2)\to X$ be two  geometric points of $X$. Then there are (non-canonical) isomorphisms between the following $k$-group schemes: \[\pi^{E}({X}/k,\bar{x}_1)\cong \pi^{E}({X}/k,\bar{x}_2)\tag{i}\] \[\pi^{L}({X}/k,\bar{x}_1)\cong \pi^{L}({X}/k,\bar{x}_2)\tag{ii}\]\[\pi^{N}({X}/k,\bar{x}_1)\cong \pi^{N}({X}/k,\bar{x}_2).\tag{iii}\]
\end{prop}

A very powerful tool to understand   arithmetic fundamental groups is the so called \textit{fundamental exact sequence} which relates the geometric part (the geometric fundamental group) to the arithmetic part (the Galois group). 
Unlike the fundamental exact sequence for $\pi_1^{\text{\'et}}$ in \cite[Expos\'e IX, Th\'eor\`eme 6.1]{SGA1}, ours is exact only in certain cases. Nonetheless it does provide some valuable information about the arithmetic fundamental group, e.g. it follows immediately from the following theorem that our arithmetic fundamental group will \textit{never} satisfy base change by separable field extensions if $k\neq k^{\text{sep}}$.

\begin{thm} (See \S 3). Let $X$ be a geometrically connected scheme which is separable  over a field $k$ and let $\bar{x}\in X(\bar{k})$ be a geometric point, then there is a complex of $k$-group schemes \begin{equation}\label{seq1}1\to\pi^{I}(\bar{X}/k,\bar{x})\to \pi^{I}(X/k,\bar{x})\to  \pi^{I}(k/k,\bar{x})\to 1\tag{1}\end{equation}where $I=N,L$ or $E$. This sequence  is always exact on the right, not always exact on the left (\ref{localcase}), and   is  exact in the middle  if and only if for any object $(P,G,p)\in I(X/k,\bar{x})$ both of the following conditions are satisfied: \begin{enumerate}[(i)]\item If $(P,G,p)$ is saturated, then the image of the composition of the natural homomorphisms $$\pi^{I}(\bar{X}/k,\bar{x})\to \pi^{I}(X/k,\bar{x})\twoheadrightarrow G$$ is a normal subgroup of $G$; \item Whenever the pull-back of $(P,G,p)$ along $\bar{X}\to X$ is trivial there is an object $(Q,H,q)\in I(k/k,\bar{x})$ whose pull-back along $X\to \Spec(k)$ is isomorphic  to $(P,G,p)$.
\end{enumerate}

\noindent Moreover, condition (i) holds for triples $(P,G,p)$ where $G$ is \'etale and $P$ is connected (\ref{topthm}) or $G$ is local and $k$ is perfect (\ref{localcase}). But as the example (\ref{normality}) shows, (i) fails when $P$ is not connected while $G$ is still \'etale. The condition  (ii) holds when $k$ is perfect or $X$ is proper or $G$ is \'etale (\ref{descentprop}), but fails (\ref{faildescent}) when $k$ is not perfect, $X$ is not proper and $G$ is not \'etale.
\end{thm}

\noindent Here, I think  the state of the art is the counterexample \ref{normality}. In fact the normality problem as in (i) is always  very difficult  in the   study of the homotopy sequence of fundamental groups.  For example, in \cite{Zh2} an essential part of the proof is devoted to the normality problem, and it is also the key issue in \cite{EHV}. Here we give a very delicate example to show that the normality condition breaks in many cases. But of course, if one only restricts to the abelian quotient $\pi_{ab}^N$ of the whole fundamental group scheme $\pi^N$ then one gets an exact sequence as long as condition (ii) holds. 

The following is another special case in which (i) and (ii) hold.


\begin{cor} (See \ref{AP}).  If either $X=\A_{{k}}^n$ with $k$ is a field of characteristic 0 \textit{or} $X$ is a complete rational variety over an arbitrary field $k$, then the  canonical map $$\pi^N({X}/k,\bar{x})\to\pi^N(k/k,\bar{x})$$ is an isomorphism. In other words, any finite flat torsor over $X$ descends uniquely to $k$.
\end{cor}

As we will see in \ref{geometricfundamentalgroup}, there are two geometric fundamental group schemes corresponding to this arithmetic fundamental group scheme. Here is the fundamental sequence with respect to the other geometric fundamental group scheme.

\begin{thm} (See \S 4). Let $X$ be a scheme geometrically connected separable  over a  field $k$, $\bar{x}\in X(\bar{k})$ be a geometric point, then there is a
 natural sequence of $\bar{k}$-group schemes \begin{equation}\label{seq2}1\to\pi^{I}(\bar{X}/\bar{k},\bar{x})\to \pi^{I}(X/k,\bar{x})\times_k\bar{k}\to  \pi^{I}(k/k,\bar{x})\times_k\bar{k}\to 1.\tag{2}\end{equation} It is a complex,  always exact on the right, exact on the left when $k$ is perfect and $X$ is q.s. and q.c., but it is in general not  exact in the middle for $I=N,E,L$.
\end{thm}

In the end we apply our discussions to construct a possibly smaller subset $\sect_{\sim}^N(k,X)$ of the full set of section classes of the fundamental exact sequence of the \'etale fundamental group (see pp. \ref{Nori section}). In fact this subset contains all the "geometric sections", i.e. those sections which come from the rational points of $X$. Thus if one expect that there is a one-to-one correspondence between the rational points and the section classes, then a priori one should expect a one-to-one correspondence between the rational points and $\sect_{\sim}^N(k,X)$. Therefore, we formulate this possibly weaker version of the section conjecture (\ref{reform conj}). Indeed this formulation has an advantage when one deals with the problem in characteristic $p>0$ (see \ref{char p}).

{\bf Acknowledgments:}  I would like to express my deepest gratitude
to my Ph.D advisor H\'el\`ene Esnault for leading me into this
beautiful $\pi_1$ world, supporting my career and influencing me
from her work. I thank my friend Jilong Tong for a lot of extremely constructive
discussions. I also thank M. Romagny,  A. Vistoli, G. Zalamansky for their interest and helpful discussions, and  O. Wittenberg for very useful comments on the last section of this paper.

\section*{Notations and Conventions}
\begin{enumerate}[(i)]\item We always use $k$ to denote a field, $\bar{k}$ to denote its algebraic  closure.

\item Let $f: S'\to S$ be a morphism of schemes, $X'$ be a scheme over $S'$. We say $X'$ possess an $S$-\textit{form} if there is a scheme $X$ over $S$ whose pull-back along $f$ is isomorphic to $X'$. 
\item When $X$ is a scheme over $k$, we use $\bar{X}$ to denote $X\times_k\bar{k}$. If $k\subseteq K$ is a field extension we use $X_K$ to denote $X\times_kK$. Sometimes we also use $\bar{X}$ (resp.$X_K$) to denote something over $\bar{k}$ (resp. $K$) which does not necessarily possess a $k$-form $X$. This depends on the situation we are in.
\item Let $X\times_SY$ be a fibred product of schemes. We use $pr_1$ to denote the first projection $X\times_SY\to X$ and $pr_2$ to denote the second projection. 
\item Let $G$ be a group scheme over $k$. In this note, a $G$-\textit{torsor}  over a $k$-scheme $X$  is an $X$-scheme $P$ equipped with a right action $\rho: P\times_kG\to P$, where $\rho$ is a morphism of $X$-schemes which induces an isomorphism $pr_1\times\rho: P\times_kG\to P\times_XP$. Moreover we require that the structure map $P\to X$ of the $X$-scheme $P$ is faithfully flat and quasi-compact.
\item Let $X$ be a scheme. We use $X_{\rm red}$ to denote the reduced closed subscheme structure of $X$.
\item Let $f:X\to S$ be a morphism of schemes. We call $f$ \textit{separable} \cite[Expos\'e X, D\'efinition 1.1]{SGA1} if it is flat and  all its   geometric fibres  are  reduced. A $k$-scheme $X$ is called separable if and only if its structure map is separable.
\item Let $G$ be a group scheme over $k$, $H\subseteq G$ be a subgroup scheme. We say $H\subseteq G$ is a \textit{normal subgroup scheme} if for any $k$-scheme $T$, $H(T)\subseteq G(T)$ is a normal subgroup. Note that $H\subseteq G$ is normal if and only if $\bar{H}\subseteq \bar{G}$ is normal.
\item Let $f:X\to Y$ be a morphism of schemes. We say $f$ is \textit{surjective} if for each morphism $y: \Spec(k)\to Y$ the fibred product of $y$ with $f$ is a non-empty scheme. If $f: X\to Y$ is a morphism of group schemes, then we say $f$ is \textit{surjective}  when $f$ is surjective as a morphism of FPQC-sheaves of groups. In the case when $X,Y$ are affine group schemes over $k$, $f$ is surjective $\Leftrightarrow$ the corresponding map of Hopf-algebras is injective.
\item Let $G$ be a $k-$group scheme. We denote by  $\Rep_k(G) $ the category of finite dimensional $k$-linear representations of $G$.
\item Let $S'\to S$ be a Galois covering, i.e. a connected finite \'etale covering which is a torsor under its own automorphism group $\Aut_S(S')$. Let $\pi': X'\to S'$ be a morphism of schemes. A \textit{twisted action} of $\Aut_S(S')$ on $X'$ is a group homomorphism $f:\Aut_S(S')\to\Aut(X')$, where $\Aut(X')$ is the group of scheme automorphisms of $X'$, such that for any $\sigma\in\Aut_S(S')$ the following diagram $$\xymatrix{X'\ar[r]^-{f(\sigma)}\ar[d]_-{\pi'}& X'\ar[d]^-{\pi'}\\S'\ar[r]^-{\sigma}&S'}$$ is commutative. By Grothendieck's general descent theory \cite[6.2, Example B, pp. 139]{BLR}, there is an equivalence of categories between the category of affine $S'$-schemes equipped with a twisted action from $\Aut_S(S')$ and the category of affine $S$-schemes. We often refer to this as \textit{Galois descent}.
\item Here is another version of \textit{Galois descent}. Let $k\subseteq K$  be a finite Galois extension. There is an equivalence of categories between the category of finite abstract groups equipped with a continous action from $\Gal(\bar{k}/k)$ (resp. an action from $\Gal(K/k)$) via {group automorphisms} and the category of finite \'etale $k$-group schemes (resp.  $k$-group schemes whose pull-back to $K$ are finite constant). \cite[3.25-3.26]{AV}.
\end{enumerate}

\maketitle
\section{The Arithmetic Nori's Fundamental Group}

Let $X$ be a reduced connected  scheme over a
field $k$, $x:S\to X$ be a  morphism of $k$-schemes with $S$ non-empty.

\begin{defn}\label{torsor} Consider the triples $(P,G,p)$ where $G$ is a finite group  scheme over $k$, $P$ is a  $G$-torsor over $X$, $p:S\to P$ is a $k$-morphism lifting $x:S\to X$. A morphism from $(P_1,G_1,p_1)$ to $(P_2,G_2,p_2)$ is a pair $(s,t)$ where $t:G_1\to G_2$ is a $k$-group scheme homomorphism, $s:P_1\to P_2$ is an $X$-scheme morphism which intertwines the group action and sends $p_1\mapsto p_2$.  We denote the category consisting of such triples by $N(X/k,x)$.
\end{defn}


\begin{defn}\label{cofilter} \cite[Expos\'e I, D\'efinition 2.7]{SGA4} A   category $I$ is called cofiltered if it satisfies the following three conditions:\begin{enumerate}[(i)]
\item it is non-empty;
\item for any objects $i,j\in I$, there exists an object $k\in I$ and two arrows $k\to i$, $k\to j$;
\item for any two morphisms $$\xymatrix{j\ar@/^/[r]^-a\ar@/_/[r]_-b &i}$$ there exists a morphism $c:k\to j$ satisfying $a\circ c=b\circ c$.
\end{enumerate}
\end{defn}

\begin{rmk} The category $N(X/k,x)$ has finite fibred products and  a final object $(X,\{1\},x)$, so in particular it is cofiltered. The proof  is due to M.V.Nori.  Considering the importance of the fact to our construction, we would like to reproduce his proof in our settings.\end{rmk}

\begin{prop}\label{cofiltered}{\rm \cite[Chapter II, Proposition 1 and Proposition 2]{Nori} } Fibred products exist in $N(X/k,x)$.
\end{prop}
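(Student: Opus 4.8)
The plan is to construct the fibred product explicitly and then verify the universal property tautologically, isolating faithful flatness as the one substantive point. Given two morphisms $(s_1,t_1)\colon (P_1,G_1,p_1)\to (P_3,G_3,p_3)$ and $(s_2,t_2)\colon (P_2,G_2,p_2)\to (P_3,G_3,p_3)$, I would take as candidate the triple $(P,H,p)$ with $H:=G_1\times_{G_3}G_2$, $P:=P_1\times_{P_3}P_2$, and $p:=(p_1,p_2)$. The group $H$ is a closed subgroup scheme of the finite group scheme $G_1\times_k G_2$, hence finite over $k$; the point $(p_1,p_2)$ lands in $P$ because $s_1\circ p_1=p_3=s_2\circ p_2$; and $H$ acts on $P$ componentwise, the compatibility $t_1(h_1)=t_2(h_2)$ guaranteeing that the action preserves the fibre-product condition $s_1(q_1)=s_2(q_2)$.

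First I would dispose of the formal verifications. Since each $G_i$ is finite, each $P_i\to X$ is affine, so $P\to X$ is affine and in particular quasi-compact. To see that $\mathrm{pr}_1\times\rho\colon P\times_k H\to P\times_X P$ is an isomorphism, I would obtain it by forming the fibre product, over the isomorphism $P_3\times_k G_3\xrightarrow{\sim}P_3\times_X P_3$, of the two torsor isomorphisms $P_i\times_k G_i\xrightarrow{\sim}P_i\times_X P_i$; using that fibre products commute with products, namely $(P_1\times_k G_1)\times_{P_3\times_k G_3}(P_2\times_k G_2)\cong P\times_k H$ and $(P_1\times_X P_1)\times_{P_3\times_X P_3}(P_2\times_X P_2)\cong P\times_X P$, a fibre product of isomorphisms is again an isomorphism, and a direct check identifies it with $\mathrm{pr}_1\times\rho$. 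Crucially this step uses no flatness.

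The heart of the matter is that $\pi\colon P\to X$ is faithfully flat. Here I would first record the easy case: if $t_2$ (say) is faithfully flat, i.e. surjective, then after base change along the fppf cover $P_2\to X$ the morphism $s_2$ becomes $\mathrm{id}\times t_2$, so $s_2\colon P_2\to P_3$ is itself faithfully flat; since $P\to P_1$ is the base change of $s_2$ along $s_1$, the composite $P\to P_1\to X$ is then fppf. For the general case I would base change along the fppf cover $W:=P_1\times_X P_2\to X$, over which both $P_1$ and $P_2$, and hence $P_3$, trivialise; writing $\gamma\in G_3(W)$ for the ``difference'' of the two resulting trivialisations of $P_3$ (so that $\gamma$ is trivial at the chosen point), one computes that $P\times_X W\to W$ is the pullback along $\gamma\colon W\to G_3$ of the morphism $m\colon G_1\times_k G_2\to G_3$, $(g_1,g_2)\mapsto t_1(g_1)t_2(g_2)^{-1}$. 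The morphism $m$ factors as the faithfully flat surjection $t_1\times t_2\colon G_1\times_k G_2\to \mathrm{im}(t_1)\times_k\mathrm{im}(t_2)$ followed by the division map of $G_3$, whose image is $\mathrm{im}(t_1)\cdot\mathrm{im}(t_2)$; and the double-coset class of $\gamma$ defines a locally constant function on $X$ valued in the finite set of components of $\mathrm{im}(t_1)\backslash G_3/\mathrm{im}(t_2)$, which is constant by connectedness of $X$ and equal to the trivial class it takes at $x$. Thus $\gamma$ factors through the image of $m$, $P\times_X W\to W$ is faithfully flat, and fppf descent yields that $\pi$ is faithfully flat. This is the step I expect to be the main obstacle, the subtle case being when neither $t_1$ nor $t_2$ is surjective, and it is the only place where connectedness of $X$ and the presence of the base point $p$ are genuinely used.

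Finally, once $(P,H,p)$ is known to lie in $N(X/k,x)$, the universal property is immediate by unwinding Definition \ref{torsor}: a morphism $(Q,K,q)\to(P,H,p)$ consists of a homomorphism $K\to H$, equivalently a compatible pair $K\to G_1$, $K\to G_2$ with equal composites to $G_3$, together with an $X$-morphism $Q\to P$ intertwining the actions and preserving the base point, equivalently a compatible pair $Q\to P_1$, $Q\to P_2$ over $P_3$ sending $q$ to $p_1$, $p_2$; this is exactly a pair of morphisms $(Q,K,q)\to(P_i,G_i,p_i)$ agreeing in $(P_3,G_3,p_3)$. Hence $(P,H,p)=(P_1,G_1,p_1)\times_{(P_3,G_3,p_3)}(P_2,G_2,p_2)$, completing the construction.
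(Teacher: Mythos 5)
Your construction, the identification of the torsor isomorphism for $(P,H,p)$ by forming a fibre product of the three torsor isomorphisms, and the reduction of everything to the difference map $\gamma\colon W\to G_3$ over $W=P_1\times_XP_2$ are all sound, and in fact structurally close to the paper's own proof: there too the key object is the map $P_1\times_XP_2\to Q\times_XQ\cong Q\times_kG\xrightarrow{pr_2}G$, of which $P$ is the fibre of the neutral element. The universal-property verification and the easy case where one $t_i$ is surjective are also fine. The gap is in the decisive sentence: from ``the double-coset class of $\gamma$ is locally constant on $X$, constant by connectedness, and trivial at $x$'' you infer that $\gamma$ factors through the scheme-theoretic image $Z=\mathrm{im}(t_1)\cdot\mathrm{im}(t_2)$ of $m$. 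Component-counting constrains $\gamma$ only \emph{topologically}: it shows $\gamma$ lands in the union of connected components of $G_3$ meeting $Z$, whereas you need factorization through the closed subscheme $Z$, which is in general \emph{not} open in $G_3$ when $G_3$ has an infinitesimal part (e.g.\ $Z=\{e\}_{\mathrm{red}}$ sitting inside the connected scheme $\mu_p$ when $\mathrm{im}(t_1)=\mathrm{im}(t_2)=\{e\}$ and $p=\mathrm{char}(k)$). Since $W$ need not be reduced (already $X\times_k\mu_p$ is not), no factorization through $Z$ follows in the infinitesimal direction. Symptomatically, your proof never uses reducedness of $X$, yet the statement is \emph{false} without it once the groups are not \'etale (cf.\ Remark \ref{rmk} (iii)): take $X=\Spec(k[\epsilon]/\epsilon^2)$ in characteristic $p$, $f=1+\epsilon$ (so $f^p=1$, $f\neq 1$, $f$ trivial at the closed point), $P_1=P_2=X$ with trivial groups mapping to the trivial torsor $P_3=X\times_k\mu_p$ by the two sections differing by $\gamma=f$; every step of your component argument holds (the double-coset space is a single point topologically, the class is trivial at $x$), yet $P=\gamma^{-1}(e)=\Spec(k)$ is not flat over $X$.

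The repair is possible along your own lines and shows exactly where the missing hypothesis enters. One checks that $Z\hookrightarrow G_3$ is a closed immersion ($Z$ is the quotient of $\mathrm{im}(t_1)\times_k\mathrm{im}(t_2)$ by the antidiagonal action of $\mathrm{im}(t_1)\cap\mathrm{im}(t_2)$, mapping monomorphically, hence closedly, into $G_3$), and that the equivariance $\gamma(w\cdot(g_1,g_2))=t_1(g_1)^{-1}\gamma(w)\,t_2(g_2)$ makes the closed subscheme $\gamma^{-1}(Z)\subseteq W$ compatible with the canonical descent datum on $W\to X$; by fppf descent of closed immersions it descends to a closed subscheme $X_0\subseteq X$. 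The underlying set $|X_0|$ is open and closed (because $|Z|$ is open-closed in the finite discrete space $|G_3|$ and $W\to X$ is open), and non-empty because $(p_1,p_2)$ lies in $\gamma^{-1}(e)$ --- this is where the base point enters, exactly as in the paper. Connectedness gives $|X_0|=|X|$, and only now \emph{reducedness} of $X$ gives $X_0=X$ as schemes, after which your pullback of the faithfully flat $m\colon G_1\times_kG_2\to Z$ along $\gamma$ does yield faithful flatness of $P\to X$ by fppf descent. This is, in different clothing, the paper's endgame: there the quotient $Y=P/(G_1\times_GG_2)$ is shown to be a closed immersion into $X$ with open-closed, non-empty image, and connectedness plus reducedness force $Y=X$. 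Your route, once patched, is more explicit about the source of flatness, but as written the infinitesimal case is not covered.
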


\begin{proof} We have to show that given any two morphisms $$(\phi_i,h_i): (P_i,G_i,
p_i)\to(Q,G,q)\in N(X/k,x)$$ where $i= 1,2$, the triple
$(P_1\times_QP_2, G_1\times_GG_2, p_1\times_q p_2)$ is again an object in
 $N(X/k,x)$.

The action of $G_1$ on $P_1$ (resp. $G_2$ on $P_2$) induces a morphism of $k$-schemes
$$\lambda: (P_1\times_QP_2)\times_k(G_1\times_GG_2)\to
(P_1\times_QP_2)\times_X(P_1\times_QP_2)$$$$\ \ \ \ \ (x_1,x_2)\times(g_1,g_2)\mapsto (x_1,x_2)\times(x_1g_1,x_2g_2).$$
By a purely abstract nonsense argument, we see that the induced morphism is an isomorphism. Now the problem is to show that the projection $\phi: P_1\times_QP_2\to X$ is FPQC.

Let $Y$ be the quotient of $P_1\times_QP_2$ by $G_1\times_GG_2$,
$$\varphi:P_1\times_QP_2\to Y$$ be the quotient map. Then there is a unique
morphism of schemes $i:Y\to X$ through which the projection $\phi$ factors. Consider the
following commutative diagram:
$$\xymatrix{(P_1\times_QP_2)\times_k(G_1\times_GG_2)\ar[r]^-{\lambda}\ar[r]_-{\cong}\ar[d]_-{\varphi\circ pr_1}
&(P_1\times_QP_2)\times_X(P_1\times_QP_2)\ar[d]^-{\varphi\times
\varphi}\\Y\ar[r]^-{\Delta}&Y\times_XY}.$$ As $i:Y\to X$ is finite, $\Delta$ is of finite presentation \cite[1.4.3.1, pp.231]{EGA IV-1}. Since $\varphi$ is finite faithfully
flat \cite[Expos\'e V, Th\'eor\`eme 4.1, pp.259]{SGA3}, $\varphi\circ pr_1$, $\varphi\times\varphi$, $\lambda$ are all finite and
faithfully flat. So $\Delta$ is also  faithfully flat. But $\Delta$
is already a closed immersion, so it has to be an isomorphism. Hence the
finite morphism $i: Y\to X$ is a monomorphism \cite[17.2.6]{EGA IV-4} in the category of
schemes. Thus it has to be a closed immersion \cite[18.12.6]{EGA IV-4}. Now look at the
following diagram
$$\xymatrix{P_1\times_QP_2\ar@{^{(}->}[r]\ar@{>>}[d]_-{\varphi}&P_1\times_XP_2\ar@{>>}[d]^-{\psi}\\Y\ar[r]^-{i}&X}.$$
Since $P_1\times_QP_2$ is the fibre of the neutral element of $G$
under the following map
$$P_1\times_XP_2\xrightarrow{(\phi_1\times\phi_2)}Q\times_XQ\xrightarrow{\cong}Q\times_kG\xrightarrow{pr_2}G,$$
$P_1\times_QP_2\subseteq P_1\times_XP_2$ must be both open and closed as a sub topological space (but not as a subscheme). The map
$\psi$ is finite flat and of finite presentation, so the underlying topological space of the scheme $Y$, as the image of
$P_1\times_QP_2$ under $\psi$, is both open and
closed in $X$. Since $P_1\times_QP_2$ admits a morphism from a \textit{non-empty} scheme $S$, it must be non-empty as well. Thus $Y\neq \varnothing$. Combining this with the condition that $X$ is connected and reduced we conclude that $i:Y\to X$ is an
isomorphism. Now  $\phi=i\circ\varphi$ is finite locally free and surjective, so in particular FPQC.
\end{proof}

\begin{rmk}\label{rmk}  (i) Proposition \ref{cofiltered}  implies that $N(X/k,x)$ is cofiltered\footnote{This was suggested to us by Jilong Tong. We were using a non-standard notion of cofilteredness in the earlier version. We  thank him for this suggestion.}. Indeed, conditions (i), (ii) of Definition \ref{cofilter} are directly checked. For (iii), suppose we have two maps $a,b: j\to i$ as in \ref{cofilter},  then we could make the following cartesian diagram$$\xymatrix{k\ar[r]^{c}\ar[d]&j\ar[d]^{a\times b}\\i\ar[r]^-{\Delta}&i\times i}$$ where $\Delta$ stands for the diagonal map. The map $c$ in the diagram is precisely what we are looking for.

(ii)  It is rather important that we require $S\neq\emptyset$, otherwise the category is not cofiltered. For example, let's take $X=\Spec(k)$ to be a field, $Q=(\Z/2\Z)_k$ be the trivial torsor under the constant group scheme $(\Z/2\Z)_k$, $P_1=P_2=\Spec(k)$ be the trivial torsor under the trivial $k$-group scheme $\{1\}$,  $\phi_i: P_i\to Q\ (i=1,2)$ be two maps sending $P_i$ to the two different points of $Q$. If the category was cofiltered, then there should be two morphisms of torsors $\psi_i: P\to P_i\ (i=1,2)$ which  equalize $\phi_1$ and $\phi_2$. But $P_1\times_QP_2=\emptyset$, so this can't happen.

(iii) In \ref{cofilter} the reducedness and connectedness assumptions are actually quite important. For example, we could take $X:=\alpha_{p,k}$ where $k$ is a field of characteristic $p$, $Q:= X\times_k\alpha_{p,k}$ the trivial $\alpha_{p,k}$-torsor over $X$, $P_1=P_2=X$ is the trivial torsor over $X$ under the trivial group scheme $\{1\}$. Let $\phi_1: P_1\to Q$ be the diagonal map $X\to X\times_k\alpha_{p,k}=X\times_kX$, and let $\phi_2: P_2\to Q$ be the map $id\times 0:X\to X\times_k\alpha_{p,k}$. Then $P_1\times_QP_2=\Spec(k)$ and the projection $P_1\times_QP_2\to X$ is just the imbedding of the identity point $0:\Spec(k)\hookrightarrow \alpha_{p,k}$ which certainly can't be flat. Now let's equip $X$ with a geometric point $x:\Spec(\bar{k})\rightarrow\Spec(k)\xrightarrow{0}X=\alpha_{p,k} $, then there are unique liftings $p_1\in P_1(\bar{k})$, $p_2\in P_2(\bar{k})$, $q\in Q(\bar{k})$ of $x$. If we had a triple $(T,H,t)\in N(X/k,x)$ and a commutative diagram $$\xymatrix{(T,H,t)\ar[r]\ar[d]&(P_1,\{1\},p_1)\ar[d]^{(\phi_1,0)}\\(P_2,\{1\},p_2)\ar[r]^{(\phi_2,0)}&(Q,\alpha_{p,k},q)}$$ then the structure map $T\to X$ of the $X$-scheme $T$ would factor through $0:\Spec(k)\to X=\alpha_{p,k}$ because $\Spec(k)=P_1\times_QP_2$. Then $T\to X$ can not be faithfully flat, but this contradicts to the assumption that $T$ is a torsor over $X$. Thus $N(X/k,x)$ is not cofiltered. If $X$ is allowed to be non-connected, then take any morphism of $k$-schemes $S\to Y$ with $S$ non-empty, let $Y_1=Y_2=Y_3=Y_4=Y_5=Y_6=Y$, $X:=Y_1\coprod Y_2$, $P_1=P_2=Y_1\coprod Y_2=X$  be the trivial $\{1\}$-torsor, $Q=Y_3\coprod Y_4\coprod Y_5\coprod Y_6$ be the trivial $(\Z/2\Z)_k$-torsor with structure map $Y_3\mapsto Y_1,Y_4\mapsto Y_2, Y_5\mapsto Y_1,Y_6\mapsto Y_2$. Now set $x:  S\to Y_1\subseteq X$ $p_1: S\to Y_1\subseteq P_1$, $p_2: S\to Y_1\subseteq P_2$,  $q: S\to Y_3\subseteq Q$. Let $\phi_1: P_1\to Q$ be the map sending $Y_1\mapsto Y_3$, $Y_2\mapsto Y_4$, $\phi_2: P_2\to Q$ be the map sending $Y_1\mapsto Y_3$, $Y_2\mapsto Y_6$. If we had a triple $(T,H,t)\in N(X/k,x)$ and a commutative diagram $$\xymatrix{(T,H,t)\ar[r]\ar[d]&(P_1,\{1\},p_1)\ar[d]^{(\phi_1,0)}\\(P_2,\{1\},p_2)\ar[r]^{(\phi_2,0)}&(Q,\alpha_{p,k},q)}$$
Then the structure map $T\to X$ of the $X$-scheme would factor through $Y_1\subsetneq X$. Therefore, $T$ is flat but \textit{not} faithfully flat over $X$, a contradiction! So $N(X/k,x)$ can not be cofiltered.

(iv) In \ref{cofiltered}, if $G_1$ and $G_2$ are \'etale then $\phi$ is automatically FPQC (\cite[Expos\'e I, Corollaire 4.8., pp.4]{SGA1}) even when $X$ is non-reduced. However, connectedness is still vital.

\end{rmk}

\begin{defn}\label{projlimit} Let $X$ be a reduced connected  scheme over a  field $k$, $x:S\to X$ be a  morphism of $k$-schemes with $S$ non-empty, $I(X/k,x)\subseteq N(X/k,x)$ be a  cofiltered full subcategory. The forgetful functor $i:=(P_i,G_i,p_i)\longmapsto G_i$ from $I(X/k,x)$ to the category of $k$-group schemes defines a small cofiltered projective system of finite $k$-group schemes. We define the arithematic Nori fundamental group scheme $\pi^I(X/k,x)$ to be $\pi^I(X/k,x):=\varprojlim_{i\in I(X/k,x)}G_i$.
\end{defn}


\section{First Properties of $\pi^I(X/k,x)$}
\subsection{The Universal Covering}
As in \cite[Chapter II, Proposition 2]{Nori} we can define
the universal covering for our fundamental group scheme.

\begin{prop}\label{unicov} Let $X$ be a  connected reduced scheme
over a field $k$, $x:S\to X$ be a  morphism of $k$-schemes with $S$ non-empty,
$I(X/k,x)\subseteq N(X/k,x)$ be a  cofiltered full subcategory. Then there exists a
triple $(\widetilde{X_x},\pi^I(X/k,x),\tilde{x})$, where
 $\widetilde{X_x}$ is a
$\pi^I(X/k,x)$-torsor over $X$,
 $\tilde{x}:S\to\widetilde{X_x}$ is an  $S$-point of $\widetilde{X_x}$ lying above $x$, which satisfies that for any $(P,G,p)\in I$
  there exists a unique morphism $$(\phi,h):
(\widetilde{X_x},\pi^I(X/k,x),\tilde{x})\to (P,G,p), $$where $h:
\pi^I(X/k,x)\to G$ is homomorphism of $k$-group schemes and
$\phi:\widetilde{X_x}\to P$ is a morphism of $X$-schemes which sends
$\tilde{x}$ to $p$ and intertwines the group actions.
\end{prop}
\begin{proof} Consider the following functors $$F_X: I(X/k,x)\to \Aff(X),\ \ \ \ \
(P,G,p)\mapsto P$$$$F_k: I(X/k,x)\to {\rm Grsch}(k),\ \ \ \ \
(P,G,p)\mapsto G$$
where $\Aff(X)$ denotes the category of affine schemes over $X$, and ${\rm Grsch}(k)$ denotes the category of finite group schemes over $k$. We have by \ref{projlimit} that $$\pi^I(X/k,x)=\varprojlim_{i\in I(X/k,x)}F_k(i)$$ Now let
$$\widetilde{X_x}:=\varprojlim_{i\in I(X/k,x)}F_X(i).$$
Then $\widetilde{X_x}$ is an affine scheme over $X$ which admits a point
$\tilde{x}:S\to \widetilde{X_x}$ lying above $x$.

Now we get a triple $(\widetilde{X_x},\pi^I(X/k,x),\tilde{x})$ which
has the property that for any $i:=(P,G,p)\in I(X/k,x)$ there is a morphism
$$(\phi_i,h_i):(\widetilde{X_x},\pi^I(X/k,x),\tilde{x})\to(P,G,p)$$
defined by the projection to the index $i\in I(X/k,x)$. Let $H$ be the image of $h_i$, then we get a factorization of $h_i$ $$\xymatrix{\pi^I(X/k,x)\ar@{->>}[rr]^-{f}&&H\ar@{^{(}->}[rr]^-{g}&&G}$$
and a commutative diagram
 $$\xymatrix{
                & (\widetilde{X_x},\pi^I(X/k,x),\tilde{x}) \ar[dr]^-{(\phi_i,h_i)} \ar@{->>}[dl]_-{(\psi,f)}            \\
 (Q,H,q)  \ar@{^{(}->}[rr]^{(\varphi,g)} & &     (P,G,p)        },$$
where $Q:=\widetilde{X_x}\times_{f}^{\pi^I(X/k,x)}H$ is the contracted producted along $f$, and $\psi,\varphi$ are canonical maps induced by the contracted product. Let $j:=(Q,H,q)$. There is a projection map $$(\phi_j,h_j): (\widetilde{X_x},\pi^I(X/k,x),\tilde{x}) \longrightarrow (Q,H,q)$$ which also makes the above diagram commutative after replacing $(\psi,f)$ by $(\phi_j,h_j)$. Since $\varphi$ and $g$ are closed imbeddings, we must have $(\psi,f)=(\phi_j,h_j)$. Hence the affine ring of $\pi^I(X/k,x)$ is in fact a filtered inductive limit of its sub Hopf-algebras  which are induced by those $j\in I(X/k,x)$ whose $h_j$ are surjective, and the same thing happens for $\widetilde{X_x}$. This implies that if there is
another morphism
$$(\phi,h):(\widetilde{X_x},\pi^I(X/k,x),\tilde{x})\to(P,G,p)$$
 then we
can find an index $i':=(P',G',p')\in I$ such that $\phi, h$ factor through
the projection morphisms
$$\phi_{i'}:\widetilde{X_x}\twoheadrightarrow P'\ \ \ \ \ \  \ \ \ \text{and} \ \ \ \ \ \ \ \ \ \ \ h_{i'}: \pi^I(X/k,x)\twoheadrightarrow G',$$ in other words, we have a commutative diagram $$\xymatrix{
                & (\widetilde{X_x},\pi^I(X/k,x),\tilde{x}) \ar[dr]^-{(\phi,h)} \ar[dl]_-{(\phi_{i'},h_{i'})}            \\
 (P',G',p')  \ar[rr]^{(\varphi,g)} & &     (P,G,p)        }.$$
 But by the very definition of a projective limit, we know that
 $(\varphi,g)\circ(\phi_{i'},h_{i'})=(\phi_i,h_i)$. Thus
 $(\phi_i,h_i)=(\phi,h)$. This completes the proof.
\end{proof}

\begin{cor}\label{imp}Let $\Hom_{\text{\rm grp.sch}}(\pi^N(X/k,x), -)$ be the category whose objects are finite $k$-group schemes equipped with $k$-group scheme homomorphisms from $\pi^N(X/k,x)$, and whose morphisms are $k$-group scheme  homomorphisms which are comptible with the homomorphisms from $\pi^N(X/k,x)$. Then there is an equivalence of categories $$\Hom_{\text{\rm grp.sch}}(\pi^N(X/k,x), -)\xrightarrow{\cong} N(X/k,x).$$  A similar statement holds if one replaces $N(X/k,x)$ by some smaller cofiltered subcategory. \end{cor}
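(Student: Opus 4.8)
The plan is to produce a quasi-inverse pair of functors, using the universal covering of Theorem \ref{unicov} in one direction and the contracted product in the other.

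First I would define a functor $\Phi: N(X/k,x)\to\Hom_{\text{grp.sch}}(\pi^N(X/k,x),-)$. Given an object $(P,G,p)$, Theorem \ref{unicov} supplies a \emph{unique} morphism $(\phi,h):(\widetilde{X_x},\pi^N(X/k,x),\tilde{x})\to(P,G,p)$, and I set $\Phi(P,G,p):=(G,h)$. For a morphism $(s,t):(P_1,G_1,p_1)\to(P_2,G_2,p_2)$ in $N(X/k,x)$, the composite $(s,t)\circ(\phi_1,h_1)$ is a morphism from the universal triple to $(P_2,G_2,p_2)$, so by the uniqueness clause of \ref{unicov} it coincides with $(\phi_2,h_2)$; reading off the group component gives $t\circ h_1=h_2$, which is precisely the condition that $t$ be a morphism in $\Hom_{\text{grp.sch}}(\pi^N(X/k,x),-)$. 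Hence $\Phi$ is a well-defined functor.

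Next I would define the candidate quasi-inverse $\Psi$ via the contracted product. Given $(G,h)$ with $h:\pi^N(X/k,x)\to G$, I set
$$\Psi(G,h):=\left(\widetilde{X_x}\times_h^{\pi^N(X/k,x)}G,\ G,\ [\tilde{x},e]\right),$$
the marked point being the class of $(\tilde{x},e)$. The essential verification is that this lands in $N(X/k,x)$: since $\pi^N(X/k,x)=\varprojlim_i G_i$ is profinite and $G$ is finite, $h$ factors through a finite quotient as $\pi^N(X/k,x)\surj G_i\to G$; because $\widetilde{X_x}\times^{\pi^N(X/k,x)}G_i\cong P_i$ is the level-$i$ torsor, one computes $\widetilde{X_x}\times_h^{\pi^N(X/k,x)}G\cong P_i\times^{G_i}G$, which is visibly a finite $G$-torsor over $X$ with faithfully flat quasi-compact structure map, hence an object of $N(X/k,x)$. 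Functoriality of $\Psi$ on morphisms $t:(G_1,h_1)\to(G_2,h_2)$ with $t\circ h_1=h_2$ is immediate from functoriality of the contracted product in the group variable.

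Finally I would exhibit the two natural isomorphisms. For $\Psi\circ\Phi\cong\id$: starting from $(P,G,p)$ with its universal map $(\phi,h)$, the equivariance of $\phi$ induces a $G$-equivariant $X$-morphism $\widetilde{X_x}\times_h^{\pi^N(X/k,x)}G\to P$, $[z,g]\mapsto\phi(z)\cdot g$, sending the marked point to $p$; since any morphism of $G$-torsors over $X$ is automatically an isomorphism, this gives the required natural isomorphism. For $\Phi\circ\Psi\cong\id$: the canonical map $z\mapsto[z,e]$ together with $h$ is a morphism from the universal triple to $\Psi(G,h)$, so the uniqueness in \ref{unicov} forces the group component produced by $\Phi$ to be $h$ itself. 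I expect the main obstacle to be the middle-paragraph check that the contracted product genuinely lies in $N(X/k,x)$ — in particular that its structure map is faithfully flat and quasi-compact even though $\widetilde{X_x}$ is an infinite, possibly non-Noetherian, projective limit — which is exactly why reducing to the finite level $P_i\times^{G_i}G$ is the crucial step. The entire argument works verbatim after replacing $N(X/k,x)$ by any cofiltered full subcategory $I(X/k,x)$, which yields the last sentence of the statement.
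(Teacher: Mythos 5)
Your proposal is correct and takes essentially the same route as the paper: the paper's proof likewise sends $h:\pi^N(X/k,x)\to G$ to the contracted product $\bigl(\widetilde{X_x}\times_h^{\pi^N(X/k,x)}G,\,G,\,\tilde{x}\bigr)$ and obtains the quasi-inverse from the uniqueness clause of Theorem \ref{unicov}. Your middle-paragraph verification that the contracted product lies in $N(X/k,x)$, by factoring $h$ through a finite quotient $G_i$ (which the proof of \ref{unicov} and Lemma \ref{saturated} supply) and identifying the result with $P_i\times^{G_i}G$, merely makes explicit a check the paper leaves implicit.
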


\begin{proof} Given   a $k$-group scheme homomorphism $f:\pi^N(X/k,x)\to G$,  we get a contracted product $$(\widetilde{X_x}\times_{f}^{\pi^N(X/k,x)}G, G,\tilde{x})\in N(X/k,x),$$ and given a morphism in $\Hom_{\text{\rm grp.sch}}(\pi^N(X/k,x), -)$, we get a morphism in $N(X/k,x)$ defined by the universal property of the contracted product. This defines a functor $$\Hom_{\text{\rm grp.sch}}(\pi^N(X/k,x), -)\xrightarrow{\cong} N(X/k,x).$$ The quasi-inverse of this functor is given by \ref{unicov}.
\end{proof}

\begin{defn}\footnote{The terminology \textit{saturated} is taken from \cite{EHV}. We also used it in \cite{Zh}. In \cite{Nori} such objects are called \textit{reduced}.}
Let $X$ be a reduced connected  scheme over a  field $k$,  $x:S\to X$ be a  morphism of $k$-schemes with $S$ non-empty, $I(X/k,x)\subseteq N(X/k,x)$ be a  cofiltered full subcategory. We call a triple $(P,G,p)\in I(X/k,x)$ an
$I$-saturated object if the corresponding projection map $\pi^I(X/k,x)\to G$ is
surjective.
\end{defn}

\begin{lem} \label{saturated}Let $X$ be a reduced connected  scheme over a  field $k$,  $x:S\to X$ be a  morphism of $k$-schemes with $S\neq\emptyset$, $I(X/k,x)\subseteq N(X/k,x)$ be a  cofiltered full subcategory.  Then the full subcategory of $ I(X/k,x)$ consisting of $I$-saturated objects is cofinal in $I(X/k,x)$,  i.e. for any object $(P,G,p)\in I(X/k,x)$ there is a morphism $$(Q,H,q)\to (P,G,p)\in I(X/k,x)$$ where $(Q,H,q)$ is an $I$-saturated object. So when we study projective limits indexed by $I(X/k,x)$ we can restrict ourselves to this smaller category of $I$-saturated objects.
\end{lem}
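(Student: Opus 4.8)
The plan is to treat one object at a time and to make explicit the construction that already appears inside the proof of Theorem \ref{unicov}. Fix an arbitrary $(P,G,p)\in I(X/k,x)$; since cofinality is checked object by object, it suffices to produce an $I$-saturated object dominating it. Let $h:\pi^I(X/k,x)\to G$ be the projection attached to $(P,G,p)$. As $G$ is a finite group scheme and $\pi^I(X/k,x)$ is an affine $k$-group scheme, the image $H:=\im(h)$ is a closed, hence finite, subgroup scheme of $G$, and $h$ factors as $\pi^I(X/k,x)\xrightarrow{f}H\xrightarrow{g}G$ with $f$ faithfully flat (surjective) and $g$ a closed immersion.

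Next I would form the contracted product $Q:=\widetilde{X_x}\times_{f}^{\pi^I(X/k,x)}H$. Because $\widetilde{X_x}$ is a $\pi^I(X/k,x)$-torsor over $X$ and $f$ is faithfully flat, $Q$ is an $H$-torsor over $X$; the point $\tilde{x}$ descends to a point $q:S\to Q$ lying over $x$, and $g$ induces a morphism of torsors $\varphi:Q\to P$, so $(\varphi,g):(Q,H,q)\to(P,G,p)$ is a morphism. This is precisely the triple denoted $j=(Q,H,q)$ in the proof of \ref{unicov}, where it is shown that the projection $(\phi_j,h_j)$ from the universal triple coincides with $(\psi,f)$; in particular the projection $h_{(Q,H,q)}=f$ is surjective, so $(Q,H,q)$ is $I$-saturated and dominates $(P,G,p)$. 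This establishes cofinality, and since a cofinal full subcategory of a cofiltered category computes the same projective limit, the final assertion of the lemma follows at once.

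The one point that needs genuine care, and which I expect to be the main obstacle, is to verify that $(Q,H,q)$ really is an object of the subcategory $I(X/k,x)$ rather than merely of $N(X/k,x)$. For this I would use that $H$ is finite while $\mathcal{O}(\pi^I(X/k,x))=\varinjlim_{i\in I}\mathcal{O}(G_i)$ is a filtered colimit: the finitely many algebra generators of $\mathcal{O}(H)$ already land in some $\mathcal{O}(G_{i_1})$, so $f$ factors as $\pi^I(X/k,x)\xrightarrow{h_{i_1}}G_{i_1}\xrightarrow{r}H$ with $i_1\in I(X/k,x)$ and $r$ surjective. Then $Q\cong P_{i_1}\times_{r}^{G_{i_1}}H$ is obtained from the honest object $i_1\in I(X/k,x)$ by pushing out along the quotient $r$, which both exhibits $(Q,H,q)$ inside $I(X/k,x)$ and supplies a dominating morphism $i_1\to(Q,H,q)$; the surjectivity of $f$ records exactly that $h$ and $h_{i_1}$ have the same image $H$ in $G$. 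In the concrete subcategories of interest this membership is automatic, since the image of a homomorphism of finite $k$-group schemes is again \'etale (resp. local, constant) when the target is, so $(Q,H,q)$ inherits the defining property of $I$; verifying that the pushout stays in $I$ in general is the crux, while the remaining torsor bookkeeping is supplied by \ref{cofiltered} and \ref{unicov}.
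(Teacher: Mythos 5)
Your argument is correct and is, at bottom, the paper's own proof written out in full: the paper simply takes the image factorization $\pi^I(X/k,x)\twoheadrightarrow H\subseteq G$ and cites \ref{imp} to assert that the triple attached to this surjection lies in $I(X/k,x)$ and maps to $(P,G,p)$, while your contracted product $Q=\widetilde{X_x}\times_f^{\pi^I(X/k,x)}H$ is exactly the construction by which \ref{imp} is deduced from \ref{unicov}. The one place where you go beyond the paper is also the right place to worry: membership of $(Q,H,q)$ in $I(X/k,x)$ is precisely what the paper buries in the unproved parenthetical of \ref{imp} (``similar statement holds'' for smaller cofiltered subcategories), and your caution is justified, because for a truly arbitrary cofiltered full subcategory the claim can fail. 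For instance, let $I$ be the full subcategory on the single object $T=(X\times_k(\Z/2\Z)_k,(\Z/2\Z)_k,(x,e))$: its endomorphisms in $N(X/k,x)$ are the identity and the collapse onto the identity section (whose group component is the trivial homomorphism), so $I$ is full and cofiltered, yet $\pi^I(X/k,x)$ is the equalizer of $\id$ and $0$ on $(\Z/2\Z)_k$, hence trivial, and $I$ contains no saturated object at all. So the lemma tacitly requires $I$ to be stable under the operation $(P,G,p)\mapsto(Q,H,q)$ for subgroup schemes $H\subseteq G$, which your final paragraph verifies exactly in the cases where the paper applies the lemma ($N$, $I_{\text{\'et}}$, $I_{\text{co}}$, $I_{\text{lc}}$, since a finite subgroup scheme of an \'etale, constant, resp.\ local $k$-group scheme is again such). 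Two minor touch-ups: since the transition maps $\mathcal{O}(G_j)\to\mathcal{O}(G_i)$ need not be injective, the factorization of $f$ through a stage $i_1$ should be argued by mapping the finitely presented Hopf algebra $\mathcal{O}(H)$ into the filtered colimit and enlarging $i_1$ until the Hopf compatibilities hold, rather than by ``generators landing in $\mathcal{O}(G_{i_1})$''; and the identification of the cone projection of $(Q,H,q)$ with $f$ via the uniqueness argument of \ref{unicov} presupposes that $(Q,H,q)$ is an object of $I$, so the membership verification must logically come first, as you indeed indicate.
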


\begin{proof} Given a triple $(P,G,p)\in I(X/k,x)$ we get a  homomorphism $\pi^I(X/k,x)\to G$. Since  $\pi^I(X/k,x)$ and  $G$ are affine group schemes,  there is a unique decomposition $$\pi^I(X/k,x)\twoheadrightarrow H\subseteq G.$$ By \ref{imp} we have a morphism $(Q,H,q)\subseteq (P,G,p)\in  I(X/k,x)$, where $(Q,H,q)$ corresponds to the surjection $\pi^I(X/k,x)\twoheadrightarrow H$. This finishes the proof.  \end{proof}

\subsection{Relations among $\pi^N,\ \pi^L,\ \pi^E,\ \pi^G$}

\begin{defn} There are various choices of $I(X/k,x)\subseteq N(X/k,x)$. We will list some of them which will be frequently used in the rest of this paper.
\begin{enumerate}[(i)]\item
$\pi^N(X/k,x):=\pi^I(X/k,x)$ when $I(X/k,x)=N(X/k,x)$;\\
\item
 $\pi^E(X/k,x):=\pi^I(X/k,x)$ when $I(X/k,x)=I_{\text{\rm\'et}}(X/k,x)$ is the subcategory consisting of triples $(P,G,p)$ where $G$ is an \textit{\'etale group scheme} over $k$;\\
\item
 $\pi^G(X/k,x):=\pi^I(X/k,x)$ when $I(X/k,x)=I_{\text{\rm co}}(X/k,x)$ is the subcategory consisting of triples $(P,G,p)$ where $G$ is a \textit{constant group scheme} over $k$;\\
\item
 $\pi^L(X/k,x):=\pi^I(X/k,x)$ when $I(X/k,x)=I_{\text{\rm lc}}(X/k,x)$ is the subcategory consisting of triples $(P,G,p)$ where $G$ is a \textit{local} (i.e. \textit{connected}) \textit{group scheme}.
\end{enumerate}
\end{defn}
\begin{rmk}\label{constant2} (i) As we have seen in \ref{rmk} (iv), $\pi^E(X/k,x)$ can be defined without the assumption that $X$ is reduced.

(ii) When $x:S\to X$ is taken to be a geometric point in $ X(\bar{k})$, $\pi^G(X/k,x)$ is a profinite affine group scheme whose group of  $k$-points is just Grothendieck's \'etale fundamental group  $\pi_1^{\rm\text{\'et}}(X,x)$. The only difference between $\pi_1^{\text{\'et}}(X,{x})$ and $\pi^G(X/k,{x})$ is that $\pi_1^{\text{\'et}}(X,{x})$ is a projective limit of finite groups, where the limit is taken in the category of topological groups in which each finite group has the discrete topology while $\pi^G(X/k,{x})$  is a projective limit of finite groups, where the limit is taken in the category of affine   group schemes in which each finite group is regarded as a constant group scheme over $k$. In other words, $\pi^G(X/k,x)$ is none other than a linearization of $\pi_1^{\rm\text{\'et}}(X,x)$.
\end{rmk}

\begin{lem}\label{relation} Let $I_1\subseteq I_2\subseteq N(X/k,x)$ be two cofiltered full subcategories and $(P,G,p)$ be
an object in $I_1$. If for any imbedding $$(Q,H,q)\hookrightarrow
(P,G,p)\in I_2$${\rm(}i.e. $H\subseteq G$ is a subgroup{\rm)}, $(P,G,p)\in I_1$ implies $(Q,H,q)\in I_1$, then we have
a surjection
$$\pi^{I_2}(X/k,x)\twoheadrightarrow \pi^{I_1}(X/k,x).$$
\end{lem}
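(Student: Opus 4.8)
The plan is to realize the surjection through the Tannakian dictionary. The inclusion of index categories $I_1\subseteq I_2$ induces the evident $k$-linear exact tensor functor $\Phi:\mathfrak{C}(X/k,x,I_1)\to\mathfrak{C}(X/k,x,I_2)$ sending a couple $(V,(P,G,p))$ to itself, viewed now in the larger category; it is compatible with the two forgetful fibre functors through their common target ${\rm Vec}_k$. Applying $\Aut^{\otimes}(\omega)$, an automorphism of the fibre functor on $\mathfrak{C}(X/k,x,I_2)$ restricts along $\Phi$ to one on $\mathfrak{C}(X/k,x,I_1)$, so $\Phi$ induces a homomorphism of $k$-group schemes $u:\pi^{I_2}(X/k,x)\to\pi^{I_1}(X/k,x)$. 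By the standard criterion (\cite{Del}), $u$ is faithfully flat, hence the asserted surjection, as soon as \textbf{(a)} $\Phi$ is fully faithful and \textbf{(b)} the essential image of $\Phi$ is stable under subobjects in $\mathfrak{C}(X/k,x,I_2)$. Thus the whole proof reduces to \textbf{(a)} and \textbf{(b)}, and the hypothesis on $I_1$ feeds exactly these two points.

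The mechanism I would isolate first is this: \emph{the $I_2$-saturation of any object of $I_1$ again lies in $I_1$}. Indeed, let $(R,G_R,r)\in I_1$ and let $i\in I_2$ be an $I_2$-saturated object dominating it; by \ref{projlimit} and \ref{saturated} we have $\pi^{I_2}(X/k,x)\surj G_i$, so the structure map $i\to R$ gives $G_i\to G_R$ with image $\bar G=\im(\pi^{I_2}(X/k,x)\to G_R)$. The surjection $\pi^{I_2}(X/k,x)\surj\bar G$ produces, via the equivalence \ref{imp} applied to $I_2$, an object $(S,\bar G,s)\in I_2$ together with an embedding $(S,\bar G,s)\inj(R,G_R,r)$ realizing $\bar G\subseteq G_R$. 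Since $(R,G_R,r)\in I_1$, the hypothesis forces $(S,\bar G,s)\in I_1$. This is the only place the assumption is used.

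For \textbf{(b)}, take $(V,P)$ with $P\in I_1$ and a subobject in $\mathfrak{C}(X/k,x,I_2)$; after replacing the representing object $P'$ by an $I_2$-saturated object dominating it — legitimate by \ref{saturated} and \ref{fundamentalrmk}, which leaves the subobject unchanged up to isomorphism — it is given by a $G_{P'}$-stable subspace $V'$ of the pullback of $V$. As $G_{P'}$ acts through $\bar G=\im(G_{P'}\to G_P)=\im(\pi^{I_2}(X/k,x)\to G_P)$, the subspace $V'$ is a $\bar G$-subrepresentation, the mechanism gives $(S,\bar G,s)\in I_1$ with $V'\in\Rep_k(\bar G)$, and the canonical isomorphism of \ref{fundamentalrmk} identifies the given subobject with $\Phi(V',S)$. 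For \textbf{(a)} I would use that the map $\theta$ of the paper embeds both Hom-spaces into $\Hom_{{\rm Vec}_k}(V,W)$ compatibly with $\Phi$, so only fullness is at issue: a morphism realized by a $G_i$-equivariant $\psi:V\to W$ over some $i\in I_2$ must be realized over $I_1$. Choosing $R\in I_1$ dominating $P$ and $Q$ and an $I_2$-saturated common refinement $i'$ of $R$ and $i$, one checks $\psi$ is equivariant for $\bar G=\im(G_{i'}\to G_R)$; since $(S,\bar G,s)\in I_1$ dominates $P$ and $Q$, the pair $(\psi,\text{composite to }Q)$ is an honest morphism over an object of $I_1$, which proves fullness.

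The main obstacle is the mechanism of the second paragraph: one must verify carefully that the image subgroup $\bar G$ genuinely arises as a quotient of $\pi^{I_2}(X/k,x)$ (so that \ref{imp} yields an object of $I_2$ to which the hypothesis applies), and that the resulting morphism is the honest subgroup inclusion $\bar G\subseteq G_R$ rather than some other map. The remaining work is bookkeeping with \ref{fundamentalrmk} and the slice-category description of morphisms in $\mathfrak{C}(X/k,x,I)$. One should also keep the variance straight: the inclusion $I_1\subseteq I_2$ gives $u$ in the direction $\pi^{I_2}\to\pi^{I_1}$, and it is \emph{faithful flatness} of $u$ — not closed immersion — that corresponds to fullness of $\Phi$ together with stability of its image under subobjects.
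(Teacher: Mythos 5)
Your proposal is correct, and its engine is exactly the paper's: for a saturated object, form the image $\bar G$ of $\pi^{I_2}(X/k,x)\to G$, use the canonical factorization $\pi^{I_2}(X/k,x)\twoheadrightarrow \bar G\subseteq G$ together with \ref{imp} to realize it as an embedded subtriple $(Q,\bar G,q)\hookrightarrow (P,G,p)$ in $I_2$, and invoke the hypothesis to place that subtriple in $I_1$. Where you genuinely diverge is in how surjectivity is then extracted. The paper stays on the group side: since by \ref{projlimit} and \ref{saturated} the group $\pi^{I_1}(X/k,x)$ is the limit of the $G$'s over $I_1$-saturated objects, it suffices to observe that for such $(P,G,p)$ the membership $(Q,\bar G,q)\in I_1$ makes $\pi^{I_1}(X/k,x)\twoheadrightarrow G$ factor through $\bar G\subseteq G$, forcing $\bar G=G$; thus each composite $\pi^{I_2}(X/k,x)\to G$ is surjective, and the surjection of limits follows at once from cofinality of saturated objects. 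You instead pass to the category side and apply the criterion of \cite{DM} (Proposition 2.21(b)), which obliges you to prove full faithfulness of $\Phi:\mathfrak{C}(X/k,x,I_1)\to\mathfrak{C}(X/k,x,I_2)$ and stability of its essential image under subobjects. Both verifications do go through --- your reductions via \ref{saturated} and \ref{fundamentalrmk} are the right ones, the equalization of the two composites from $i'$ to $P$ is supplied by cofilteredness (\ref{cofilter}(iii)), and equivariance and stability of subspaces descend along the faithfully flat quotient $G_{i'}\twoheadrightarrow\bar G$ as you use --- but they are precisely the extra bookkeeping the paper's level-wise argument bypasses. What your route buys is a stronger, explicitly categorical conclusion: under the hypothesis, $\mathfrak{C}(X/k,x,I_1)$ sits inside $\mathfrak{C}(X/k,x,I_2)$ as a full subcategory closed under subobjects, of which the bare surjection is the Tannakian shadow; what the paper's route buys is brevity, testing surjectivity one finite quotient at a time. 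Your closing worries are unfounded: that $\bar G$ arises as a quotient of $\pi^{I_2}(X/k,x)$ is the unique epi-mono factorization of a homomorphism of affine group schemes, used verbatim in the proof of \ref{saturated}, and the equivalence \ref{imp} does return the honest inclusion of triples over $\bar G\subseteq G_R$.
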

\begin{proof} Let $(P,G,p)\in I_1$ be an $I_1$-saturated object. Then we can take the image of the composition $$\pi^{I_2}(X/k,x)\to \pi^{I_1}(X/k,x)\twoheadrightarrow G$$ and denote it by $H$. By \ref{imp} we get an inclusion $(Q,H,q)\hookrightarrow (P,G,p)\in I_2$. So by the assumption this inclusion lives in $I_1$. This implies that the surjection  $\pi^{I_1}(X/k,x)\twoheadrightarrow G$ factors through $H\hookrightarrow G$. Thus $H=G$. This concludes the proof.
\end{proof}

\begin{prop}\label{comparison} The following natural $k$-group scheme homomorphisms
\begin{enumerate}[{\rm (i)}]\item $\pi^{N}(X/k,x)\twoheadrightarrow \pi^{E}(X/k,x)\twoheadrightarrow \pi^{G}(X/k,x)$ \item$\pi^{N}(X/k,x)\twoheadrightarrow \pi^{L}(X/k,x)$ \item$\pi^{N}(X/k,x)\twoheadrightarrow \pi^{E}(X/k,x)\times_k\pi^{L}(X/k,x)$\end{enumerate} are all surjections.
\end{prop}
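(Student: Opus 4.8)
The plan is to obtain the surjections in (i) and (ii) as immediate instances of Lemma \ref{relation}, and to treat (iii) separately by a Goursat-type argument showing that $\pi^E(X/k,x)$ and $\pi^L(X/k,x)$ have no common nontrivial quotient group scheme. For (i) and (ii) I would apply Lemma \ref{relation} to the nested cofiltered subcategories
\[
I_{\text{\rm co}}(X/k,x)\subseteq I_{\text{\rm\'et}}(X/k,x)\subseteq N(X/k,x),\qquad I_{\text{\rm lc}}(X/k,x)\subseteq N(X/k,x),
\]
where in each case the sole hypothesis to verify is the closure condition of Lemma \ref{relation}: for an embedding $(Q,H,q)\hookrightarrow(P,G,p)$ in the larger category (so $H\subseteq G$ a closed subgroup scheme), membership of $(P,G,p)$ in the smaller category must force membership of $(Q,H,q)$. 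This reduces to three elementary stability statements about finite $k$-group schemes, which I would record briefly: a closed subgroup scheme of an \'etale group scheme is again \'etale; a closed subgroup scheme of a constant group scheme is again constant (a closed subscheme of $\coprod_{\gamma}\Spec k$ is a sub-coproduct); and a closed subgroup scheme of a local group scheme is again local (it is supported at the identity, hence connected). Granting these, Lemma \ref{relation} yields $\pi^N\twoheadrightarrow\pi^E\twoheadrightarrow\pi^G$ and $\pi^N\twoheadrightarrow\pi^L$.

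For (iii) the combined homomorphism $\pi^N\to\pi^E\times_k\pi^L$ is the one induced by the two surjections just produced; write $N_E$ and $N_L$ for their kernels, so that $\pi^E=\pi^N/N_E$ and $\pi^L=\pi^N/N_L$, and the image is the subgroup scheme $K$ whose two projections onto $\pi^E$ and $\pi^L$ are both surjective. I would then invoke the Goursat criterion for affine group schemes: the map $\pi^N\to\pi^E\times_k\pi^L$ is faithfully flat precisely when $N_E\cdot N_L=\pi^N$, equivalently when the common quotient $\pi^N/(N_E N_L)$ is trivial. This quotient is at once a quotient of $\pi^E=\pi^N/N_E$ and of $\pi^L=\pi^N/N_L$. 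By Proposition \ref{projlimit}, $\pi^E$ is a projective limit of \'etale $G_i$ and $\pi^L$ a projective limit of local $G_i$, so $\pi^E$ is pro-\'etale and $\pi^L$ is pro-local; consequently every finite quotient of $\pi^N/(N_E N_L)$ is simultaneously a quotient of a finite \'etale group scheme and of a finite local group scheme, hence both \'etale and local, hence trivial. Since $\pi^N/(N_E N_L)$ is the projective limit of its finite quotients, it is trivial, whence $K=\pi^E\times_k\pi^L$ and (iii) follows.

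The routine points, namely the three subgroup-stability facts and the (already implicit) cofilteredness of the cited subcategories, I would state only briefly. The one genuinely non-formal step is (iii): its heart is the remark that a finite group scheme that is both \'etale and local is trivial, together with the passage from this statement about finite quotients to the whole of $\pi^N/(N_E N_L)$. I expect the main care to be needed in justifying that a quotient of the pro-\'etale group scheme $\pi^E$ is again pro-\'etale, so that its finite quotients really are \'etale; this I would handle by writing the coordinate Hopf algebra as a filtered union of finite-dimensional \'etale sub-Hopf-algebras, any finite-dimensional Hopf subalgebra being contained in one of them, and arguing symmetrically on the local side.
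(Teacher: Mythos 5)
Your proposal is correct, and for parts (i) and (ii) it matches the paper exactly: the paper also dispatches these with Lemma \ref{relation} (``In the view of \ref{relation}, only the last statement needs to be explained''), leaving implicit the three subgroup-stability facts you spell out, all of which are true as you state them. For part (iii), however, you take a genuinely different route. The paper stays inside the torsor formalism and applies Lemma \ref{relation} a third time, with $I_1$ the subcategory of triples whose group is a direct product $G=G^0\times_kG_{\text{\'et}}$ of a local and an \'etale group scheme; the key verification is then purely about finite group schemes: for a subgroup $H\subseteq G$ the connected-\'etale sequence of $H$ splits, because $H_{\rm red}\subseteq G_{\rm red}=G_{\text{\'et}}$ forces $H_{\rm red}=H_{\text{\'et}}$, and the action of $H_{\text{\'et}}$ on $H^0$ is trivial since it is restricted from the trivial action of $G_{\text{\'et}}$ on $G^0$, so $H=H^0\times_kH_{\text{\'et}}$ and $I_1$ is stable under subobjects, which is exactly the hypothesis of Lemma \ref{relation}. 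You argue instead on the limit groups: Goursat identifies the image of $\pi^N\to\pi^E\times_k\pi^L$ with the fibre product over the common quotient $\pi^N/(N_EN_L)$, which you kill because it is simultaneously pro-\'etale and pro-local, a finite group scheme that is both \'etale and local being trivial. This is sound: the Goursat identity holds for affine group schemes over a field at the level of fppf sheaves (quotients by closed normal subgroup schemes and the subgroup $N_EN_L$ exist by standard Hopf-algebra theory), and your filtered-union argument is precisely the structure the paper itself establishes in \ref{unicov} and \ref{saturated} -- the affine ring of a profinite group scheme is a filtered union of the finite sub-Hopf-algebras coming from saturated objects, and a finite-dimensional Hopf subalgebra of an \'etale (resp.\ connected) finite Hopf algebra is again \'etale (resp.\ connected), so finite quotients of $\pi^E$ and $\pi^L$ really are \'etale and local. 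Comparing the two: the paper's argument is lighter, reusing only Lemma \ref{relation} plus the connected-\'etale splitting for subgroups of products, though it quietly needs $H_{\rm red}$ to be a subgroup scheme, which over an imperfect field is saved here only by the embedding $H_{\rm red}\hookrightarrow G_{\text{\'et}}$; your argument avoids the structure theory of subgroups of products altogether, works uniformly over any field at the cost of invoking the heavier (if standard) machinery of quotients and Goursat for affine group schemes, and yields a little more, namely that the obstruction to surjectivity in (iii) is exactly the maximal common quotient of $\pi^E(X/k,x)$ and $\pi^L(X/k,x)$.
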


\begin{proof} In the view of \ref{relation}, only the last statement needs to be explained.  For this, we take in  \ref{relation} $I_2:=N(X/k,x)$ and $I_1$ to be the triples $(P,G,p)$ whose group $G$ is isomorphic to a direct product of an \'etale $k$-group scheme and a local $k$-group scheme, i.e.  $G=G^0\times_k G_{\text{\rm\'et}}$. Now suppose $H\subseteq G$ is a subgroup scheme. Then the connected-\'etale sequence for $H$ splits because $H_{\rm red}\subseteq G_{\rm red}=G_{\text{\'et}}\Rightarrow H_{\rm red}=H_{\text{\'et}}$. But since $G_{\text{\'et}}$ acts trivially on $G^0$ and the action of $H_{\text{\'et}}$ on $H^0$ is compatible with that of $G_{\text{\'et}}$ on $G^0$, $H_{\text{\'et}}$ must act trivially on $H^0$, or in other words, $H=H^0\times_k H_{\text{\'et}}$.
\end{proof}
\begin{ex}\label{ex}Here we want to point out that all the above surjections are, in general, not isomorphisms.\\
(i). For $\pi_G^E:\pi^{E}(X/k,x)\twoheadrightarrow \pi^{G}(X/k,x)$. Let's take $X=\Spec(k)=\Spec(\Q)$, $\bar{x}: \Spec(\bar{\Q})\to \Spec(\Q)$ is the natural field extension. Let $\alpha\in \Q$, $n\in\N\setminus\{0\}$, and suppose $x^n-\alpha$ has no root in $\Q$, then $P:=\Spec(\Q[x]/(x^n-\alpha))$ is a non-trivial $\mu_n$-torsor over $\Q$. Choosing any point $p\in P(\bar{\Q})$, we get a triple $(P,\mu_n,p)\in N(X/k,\bar{x})$. Let $\varphi:\pi^{E}(X/k,\bar{x})\to \mu_n$ be the homomorphism
corresponding to $(P,\mu_n, p)$ as in \ref{imp}. If the map $\pi_G^E$ was an isomorphism, then there should be a $k$-group scheme homomorphism $\phi:\pi^{G}(X/k,\bar{x})\to \mu_n$ satisfying $\phi\circ\pi_G^E=\varphi$. But since $\pi^{G}(X/k,\bar{x})$ is a cofiltered projective limit of finite constant group schemes, there must be a factorization $$\xymatrix@R=0.5cm{
                &         H \ar@{.>}[dd]^{\lambda}     \\
  \pi^{G}(X/k,\bar{x}) \ar@{>>}[ur]^{} \ar[dr]_-{\phi}                 \\
                &         \mu_n                 },$$
                where $H$ is a constant group scheme. But when $n$ is a prime number, $\mu_n$ is a $\Q$-scheme of two connected components. Thus the fact that $P$ is a non-trivial torsor would imply that $\varphi$ is surjective, and so is $\phi$. Therefore, the map $\lambda: H\to \mu_n$ should also be sujective, and hence  $\mu_n$ has to be a constant group scheme. But this is  not the case when $n>2$.\\
 (ii). For $\pi_E^N:\pi^{N}(X/k,x)\twoheadrightarrow \pi^{E}(X/k,x)$. If it was an isomorphism then any torsor with local group scheme will be dominated by an \'etale torsor, then the local torsor has to be trivial. Hence any non-trivial local torsor gives a counterexample. Yet we would like to point out that if $X=\Spec(k)$ where $k$ is perfect, $\bar{x}:\Spec(\bar{k})\to \Spec(k)$ is the natural field extension, then $\pi_E^N$ is an isomorphism (see \ref{perfectlemma}). But if $k$ is not perfect and ${\rm char}(k)=p$, one can choose $\alpha\in\bar{k}$ such that $\alpha\notin k$ but $\alpha^p\in k$. Thus the field extension $k\subseteq k(\alpha)$ is a non-trivial $\mu_p$-torsor.\\
(iii). For $\pi_L^N:\pi^{N}(X/k,x)\twoheadrightarrow \pi^{L}(X/k,x)$. As in (ii) any non-trivial \'etale torsor provides a counterexample. And also (iii) is implied by (iv).\\
(iv). For $\pi^{N}(X/k,x)\twoheadrightarrow \pi^{E}(X/k,x)\times_k\pi^{L}(X/k,x).$ There is a perfect counterexample in \cite{EPS}[Remark 4.3].
\end{ex}

\subsection{The Nori-Galois group of a Field}\label{field}

\begin{defn} Let $k$ be a field, $\bar{x}$ be the map  $\Spec(\bar{k})\to \Spec(k)$ coresponding to the natural field extension $k\subseteq \bar{k}$. We call $\pi^N(k/k,\bar{x})$ the Nori-Galois group of $k$.
\end{defn}

\begin{prop}\label{perfectlemma} Let $k$ be a perfect field, $\bar{x}:\Spec(\bar{k})\to \Spec(k)$ be the natural field extension $k\subseteq \bar{k}$. Then the canonical surjection
$$\pi_E^N:\pi^N(k/k,\bar{x})\longrightarrow \pi^E(k/k,\bar{x})$$ is an
isomorphism.
\end{prop}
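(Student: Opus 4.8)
The plan is to reduce the statement to a cofinality property of the étale torsors. Since the canonical homomorphism $\pi_E^N$ is already known to be surjective (\ref{comparison}), it suffices to show that for every triple $(P,G,p)\in N(k/k,\bar{x})$ there is a reduction of structure group to the étale subgroup scheme $G_{\rm red}\subseteq G$ which is compatible with the base point $p$; this exhibits $I_{\text{\rm\'et}}(k/k,\bar{x})$ as a cofinal subcategory of $N(k/k,\bar{x})$ and forces the two projective limits of \ref{projlimit} to coincide.

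First I would isolate the two consequences of perfectness that drive everything. Since $k$ is perfect, $G_{\rm red}$ is a smooth, hence étale, subgroup scheme of $G$, and the connected--étale sequence splits as a semidirect product $G\cong G^0\rtimes G_{\rm red}$; consequently the quotient $G/G_{\rm red}$ is isomorphic as a $k$-scheme to the infinitesimal group scheme $G^0$, the identity coset going to the unique point of $G^0$. Second, a finite connected $k$-scheme whose base change to $\bar{k}$ stays connected has residue field purely inseparable over $k$, hence equal to $k$ because $k$ is perfect.

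The key step is the reduction itself. Given $(P,G,p)$, let $Z:=P/G_{\rm red}$ be the quotient by the free right action of the subgroup $G_{\rm red}$, so that $P\to Z$ is a $G_{\rm red}$-torsor and $Z$ is a finite $k$-scheme. Over $\bar{k}$ the point $p$ trivialises $P_{\bar{k}}\cong G_{\bar{k}}$ and identifies $Z_{\bar{k}}\cong (G/G_{\rm red})_{\bar{k}}\cong G^0_{\bar{k}}$, which is infinitesimal; hence $Z$ is connected and $Z(\bar{k})$ is a single point. By the second fact above the residue field of $Z$ equals $k$, so $Z_{\rm red}=\Spec(k)$ provides a $k$-point $s\colon\Spec(k)\to Z$. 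Since $Z(\bar{k})$ has only one point, $s$ automatically coincides over $\bar{k}$ with the image of $p$; pulling the torsor $P\to Z$ back along $s$ therefore yields a $G_{\rm red}$-torsor $Q:=P\times_Z\Spec(k)$ sitting inside $P$ and through which $p$ factors, giving a lift $q$. Thus $(Q,G_{\rm red},q)\to(P,G,p)$ is a morphism in $N(k/k,\bar{x})$ whose source has étale structure group.

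Finally I would conclude. The reduction shows that $I_{\text{\rm\'et}}(k/k,\bar{x})$ is cofinal in $N(k/k,\bar{x})$, so the inclusion induces an isomorphism of the projective limits of the $G_i$, i.e. $\pi^N(k/k,\bar{x})\xrightarrow{\cong}\pi^E(k/k,\bar{x})$, and by construction this is exactly $\pi_E^N$. Equivalently, in the language of \ref{saturated}: if $G_{\rm red}\neq G$ then the factorisation $(Q,G_{\rm red},q)\to(P,G,p)$ shows via \ref{imp} that $\pi^N(k/k,\bar{x})\to G$ lands in the proper subgroup $G_{\rm red}$, so no triple with $G^0\neq\{1\}$ can be $N$-saturated; the $N$-saturated and $E$-saturated objects then coincide and are cofinal in both categories, again giving the isomorphism. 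The one genuinely substantial point — and the place where perfectness is indispensable — is the key step: verifying that the reduction to $G_{\rm red}$ exists and respects the base point, which amounts to killing the purely inseparable residue-field extension of $Z$.
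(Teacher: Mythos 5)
Your proof is correct, but it takes a genuinely different route from the paper's. The paper's proof of \ref{perfectlemma} never forms the quotient $P/G_{\rm red}$: it observes that over a perfect field the reduction functor commutes with products, so the torsor isomorphism $P\times_k G\xrightarrow{\;\cong\;}P\times_k P$ (here $X=\Spec(k)$, so $P\times_XP=P\times_kP$) restricts to an isomorphism $P_{\rm red}\times_k G_{\rm red}\xrightarrow{\;\cong\;}P_{\rm red}\times_k P_{\rm red}$, exhibiting $P_{\rm red}$ itself as a $G_{\rm red}$-torsor; the point $p$ factors through $P_{\rm red}$ because $\Spec(\bar{k})$ is reduced, and cofinality of $I_{\text{\'et}}(k/k,\bar{x})$ follows in one stroke. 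You instead quotient by $G_{\rm red}$, identify $Z_{\bar{k}}\cong G^0_{\bar{k}}$, use perfectness to kill the purely inseparable residue extension of the one-point scheme $Z$ and thereby produce a rational point $s$, and pull the torsor back along $s$ --- structurally this is the mirror image of the paper's proof of \ref{locallemma}, where one quotients towards $G_{\text{\'et}}$ and pulls back along a rational point furnished by separable closedness, and it is also the shape of the argument in \ref{descentprop}. What the paper's route buys is economy: it needs neither the connected--\'etale splitting $G\cong G^0\rtimes G_{\rm red}$ nor the existence of $P/G_{\rm red}$ as a scheme with $P\to P/G_{\rm red}$ a $G_{\rm red}$-torsor (facts you use correctly but should cite, e.g.\ \cite[4.16]{AV}, as the paper does in \ref{cofiltered}). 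What your route buys is that the base-point compatibility is made completely explicit --- the singleton $Z(\bar{k})$ forces $s$ to agree with the image of $p$, whereas the paper's version gets this for free from reducedness of $\Spec(\bar{k})$ --- and your template transports to settings where one controls a section of a quotient rather than reduced products. Note finally that the two constructions produce literally the same subobject: your $Q$ is a $G_{\rm red}$-torsor over $k$, hence finite \'etale and in particular reduced, so $Q\subseteq P_{\rm red}$, and since both are finite of length $|G_{\rm red}|$ over $k$ this closed immersion is an equality $Q=P_{\rm red}$. Your concluding reduction to saturated objects via \ref{imp} and \ref{saturated} matches the paper's formal mechanism (\ref{fundamentalrmk}, \ref{projlimit}), so no gap remains.
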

\begin{proof} Let $(P,G,p)\in N(k/k,\bar{x})$ be an object. Then there is a canonical
isomorphism $P\times_kG\cong P\times_kP.$ Let $P_{\rm red}$ be the
reduced closed subscheme of $P$ and $G_{\rm red}$ be the reduced
closed subscheme of $G$. As $k$ is perfect, $$P_{\rm red}\times_kG_{\rm red}\subseteq P\times_kG\ \ \ \ \ \text{and}\ \ \ \ \  P_{\rm red}\times_kP_{\rm red}\subseteq P\times_kP$$ are the unique reduced closed subschemes  of the underlying spaces. This  induces a diagram
$$\xymatrix{P_{\rm red}\times_kG_{\rm red}\ar[r]\ar[d]& P_{\rm red}\times_kP_{\rm red}\ar[d]\\P\times_kG\ar[r]^{\cong}&
P\times_kP}$$ in which the upper horizontal arrow is an isomorphism. But $G_{\rm red}$ is \'etale, as $k$ is perfect. Therefore, we get a morphsim $$(P_{\rm red},G_{\rm red}, p)\subseteq (P,G,p)\in N(k/k,\bar{x})$$ where $(P_{\rm red},G_{\rm red}, p)\in I_{\text{\rm \'et}}(k/k,x)$. Hence $I_{\text{\rm \'et}}(k/k,x)$ is cofinal inside $N(k/k,x)$. Thus $\pi_E^N$ is an isomorphism.
\end{proof}

\begin{cor} \label{greatcor} Assumptions and notations being as in \ref{perfectlemma}, we have $$\pi^L(k/k,\bar{x})=\{1\}.$$
\end{cor}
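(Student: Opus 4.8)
The plan is to exploit the isomorphism $\pi^N(k/k,\bar{x})\cong\pi^E(k/k,\bar{x})$ just established in \ref{perfectlemma}, together with the canonical surjection $\pi^N(k/k,\bar{x})\twoheadrightarrow\pi^L(k/k,\bar{x})$ of \ref{comparison}(ii). Composing the inverse of the isomorphism with this surjection yields a surjection $\pi^E(k/k,\bar{x})\twoheadrightarrow\pi^L(k/k,\bar{x})$. The conceptual point is that the source is ``pro-\'etale'' while the target is ``pro-local'', and a $k$-group scheme that is simultaneously both must be trivial; it remains to package this into finite quotients.

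First I would record two structural facts. By \ref{projlimit} and \ref{saturated} we may write $\pi^E(k/k,\bar{x})=\varprojlim_i G_i$ over the $E$-saturated objects, with every $G_i$ \'etale, and likewise $\pi^L(k/k,\bar{x})=\varprojlim_j G_j$ with every $G_j$ local. Since any homomorphism from $\varprojlim_i G_i$ to a finite group scheme $Q$ factors through some $G_i$ (a finite-dimensional sub-Hopf-algebra of $\varinjlim_i\mathcal{O}(G_i)$ is contained in a single $\mathcal{O}(G_i)$), and since \'etaleness is inherited by quotient group schemes over a field, every finite quotient of $\pi^E(k/k,\bar{x})$ is \'etale; dually, every finite quotient of $\pi^L(k/k,\bar{x})$ is local.

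Next, given any finite quotient $\pi^L(k/k,\bar{x})\twoheadrightarrow Q$, I would precompose with the surjection $\pi^E(k/k,\bar{x})\twoheadrightarrow\pi^L(k/k,\bar{x})$ to realize $Q$ also as a finite quotient of $\pi^E(k/k,\bar{x})$. Then $Q$ is at once \'etale and local. But a finite $k$-group scheme that is connected (local) and \'etale is trivial: its coordinate ring is a finite separable $k$-algebra with no nontrivial idempotents, hence a finite separable field extension $A/k$ admitting the augmentation $A\to k$, which forces $A=k$. Therefore every finite quotient of $\pi^L(k/k,\bar{x})$ is trivial; in particular every $L$-saturated $G_j$ is trivial, and since $\pi^L(k/k,\bar{x})=\varprojlim_j G_j$ we conclude $\pi^L(k/k,\bar{x})=\{1\}$.

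The main obstacle is not a single deep step but the bookkeeping: one must make sure that the finite quotients in play are precisely the saturated $G_j$ and that locality (resp. \'etaleness) descends to quotient group schemes over a field. Should one prefer to avoid invoking the surjection $\pi^E\twoheadrightarrow\pi^L$, there is a fully parallel alternative to \ref{perfectlemma}: for an $L$-saturated $(P,G,p)$ with $G$ local and $k$ perfect, $G_{\rm red}$ is a connected \'etale group scheme, hence $G_{\rm red}=\{1\}$, so $P_{\rm red}\cong\Spec k$ and the inclusion $(P_{\rm red},G_{\rm red},p)\hookrightarrow(P,G,p)$ is a morphism out of the final object $(\Spec k,\{1\},\bar{x})$; by \ref{imp} this forces the classifying homomorphism $\pi^L(k/k,\bar{x})\to G$ to be trivial, while saturation makes it surjective, whence $G=\{1\}$.
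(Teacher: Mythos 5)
Your main argument is correct, but it takes a genuinely different route from the paper's. The paper's own proof of \ref{greatcor} is a two-line geometric argument that recycles the construction of \ref{perfectlemma}: for any $(P,G,p)\in I_{\text{lc}}(k/k,\bar{x})$ over the perfect field $k$ one gets the imbedding $(P_{\rm red},G_{\rm red},p)\subseteq (P,G,p)$, and since $G$ is connected the triple $(P_{\rm red},G_{\rm red},p)$ is the trivial one; thus the trivial triple dominates every object of $I_{\text{lc}}(k/k,\bar{x})$ and the projective limit of \ref{projlimit} collapses. You instead work entirely at the level of pro-groups: composing the isomorphism $\pi^N(k/k,\bar{x})\cong\pi^E(k/k,\bar{x})$ of \ref{perfectlemma} with the surjection $\pi^N(k/k,\bar{x})\twoheadrightarrow\pi^L(k/k,\bar{x})$ of \ref{comparison} to obtain $\pi^E(k/k,\bar{x})\twoheadrightarrow\pi^L(k/k,\bar{x})$, and then killing every finite quotient by the observation that a finite $k$-group scheme which is simultaneously \'etale and local is trivial. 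The steps check out: the factorization of a homomorphism $\varprojlim_i G_i\to Q$ with $Q$ finite through a single $G_i$ is exactly the filtered-sub-Hopf-algebra argument the paper itself deploys in \ref{unicov}; quotients of \'etale (resp.\ connected) finite group schemes over a field are \'etale (resp.\ connected), since $\mathcal{O}(G/N)\subseteq\mathcal{O}(G)$ is a subalgebra of a separable algebra in the first case and a quotient map is surjective on underlying spaces in the second; and restricting to saturated objects via \ref{saturated} legitimately reduces triviality of $\pi^L(k/k,\bar{x})$ to triviality of its finite quotients. What your route buys is that, once \ref{perfectlemma} and \ref{comparison} are granted, no further geometry is needed and the corollary becomes purely formal; what it costs is the quotient-and-factorization bookkeeping you yourself flag, which the paper's direct cofinality argument avoids entirely. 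Note finally that the alternative you sketch in your last sentences --- $G_{\rm red}$ is reduced hence \'etale over the perfect $k$, connected hence trivial, so $P_{\rm red}\cong\Spec(k)$ and the morphism out of the final object forces, via \ref{imp}, the classifying homomorphism $\pi^L(k/k,\bar{x})\to G$ to be trivial while saturation forces it to be surjective --- is essentially verbatim the paper's own proof, so your proposal in fact contains both arguments.
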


\begin{proof} Let $(P,G,p)\in I_{\text{lc}}(k/k,\bar{x})$ be an object. Then, as in the proof of \ref{perfectlemma}, we see that there is an imbedding $$(P_{\rm red},G_{\rm red},p) \subseteq (P,G,p)\in N(k/k,\bar{x}).$$ But since $G$ is connected, $(P_{\rm red},G_{\rm red},p)$ is just the trivial triple. This finishes the proof.
\end{proof}

\begin{prop}\label{locallemma} Let $k$ be a separably closed field, and $\bar{x}:\Spec(\bar{k})\to \Spec(k)$ be the natural field extension. Then we have $$\pi^E(k/k,\bar{x})=\{1\},$$ and  the canonical surjection
$$\pi_L^N:\pi^N(k/k,\bar{x})\longrightarrow \pi^L(k/k,\bar{x})$$ is an
isomorphism.
\end{prop}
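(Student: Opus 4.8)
The statement to prove is \ref{locallemma}: for $k$ separably closed, $\pi^E(k/k,\bar{x})=\{1\}$ and the canonical surjection $\pi_L^N:\pi^N(k/k,\bar{x})\to\pi^L(k/k,\bar{x})$ is an isomorphism. The proof naturally splits into these two assertions, and by the now-familiar pattern of \ref{perfectlemma} and \ref{greatcor} I expect both to follow from a cofinality argument inside $N(k/k,\bar{x})$, using the connected-\'etale decomposition of finite group schemes.

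\textbf{First assertion.} To show $\pi^E(k/k,\bar{x})=\{1\}$ it suffices, by \ref{projlimit}, to show that every object $(P,G,p)\in I_{\text{\'et}}(k/k,\bar{x})$ is dominated by the trivial triple, i.e. that the projection $\pi^E(k/k,\bar{x})\to G$ is trivial for every such $G$. Since $G$ is \'etale over the separably closed field $k$, $G$ is a constant group scheme, and $P$ is a torsor under $G$ over $\Spec(k)$. The key point is that over a separably closed field every finite \'etale torsor is trivial: torsors under a finite \'etale (hence constant) group scheme are classified by $H^1_{\text{\'et}}(\Spec(k),G)$, which vanishes because $\Gal(\bar{k}/k)=\Gal(\bar{k}/k_{\text{sep}})$ is trivial on the \'etale site ($k$ separably closed forces $k_{\text{sep}}=k$, so there are no nontrivial finite \'etale covers of $\Spec(k)$). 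Hence $P\cong G$ as a trivial torsor, and the chosen point $p$ selects an element of $G(\bar{k})$; the triple $(P,G,p)$ is therefore isomorphic in $I_{\text{\'et}}(k/k,\bar{x})$ to the trivial triple $(\Spec(k),\{1\},\bar{x})$ after translating by that element. This shows $I_{\text{\'et}}(k/k,\bar{x})$ has only the trivial $I$-saturated object, whence $\pi^E(k/k,\bar{x})=\{1\}$.

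\textbf{Second assertion.} For the isomorphism $\pi_L^N$, I would mirror the proof of \ref{perfectlemma} but reverse the roles of the local and \'etale parts. Given any $(P,G,p)\in N(k/k,\bar{x})$, the connected-\'etale sequence gives $1\to G^0\to G\to G_{\text{\'et}}\to 1$, and the quotient $G_{\text{\'et}}$ is \'etale. The strategy is to show that the induced triple for the \'etale quotient $G_{\text{\'et}}$ is trivial, so that the \'etale part of any torsor contributes nothing and the local subcategory $I_{\text{lc}}(k/k,\bar{x})$ is cofinal in $N(k/k,\bar{x})$. Concretely, pushing $(P,G,p)$ forward along $G\twoheadrightarrow G_{\text{\'et}}$ yields a $G_{\text{\'et}}$-torsor over $\Spec(k)$, which by the first assertion is trivial; hence the projection $\pi^N(k/k,\bar{x})\to G_{\text{\'et}}$ factors trivially, forcing the image of $\pi^N(k/k,\bar{x})$ in $G$ to land inside $G^0$. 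Thus every $N$-saturated object already has connected group scheme, i.e.\ lies in $I_{\text{lc}}(k/k,\bar{x})$. By \ref{saturated} the saturated objects are cofinal, so $I_{\text{lc}}(k/k,\bar{x})$ is cofinal in $N(k/k,\bar{x})$, and together with \ref{fundamentalrmk} this makes the inclusion $\mathfrak{C}(k/k,\bar{x},I_{\text{lc}})\subseteq\mathfrak{C}(k/k,\bar{x})$ an equivalence; equivalently $\pi_L^N$ is an isomorphism.

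\textbf{Main obstacle.} The delicate point is the second assertion: I must argue that the image of $\pi^N$ in a saturated $G$ lands in $G^0$. This requires knowing that a $G_{\text{\'et}}$-torsor obtained by pushout is trivial \emph{and} that triviality of this quotient torsor forces the reduction of the saturation to be trivial. The reduced subscheme argument used in \ref{perfectlemma} relied on $k$ being perfect to ensure $G_{\text{red}}$ is a subgroup scheme; here $k$ separably closed need not be perfect, so I cannot blindly take $G_{\text{red}}$. The cleaner route is the pushout-to-$G_{\text{\'et}}$ argument above, which avoids $G_{\text{red}}$ entirely and uses only the functorial connected-\'etale quotient, which exists over any field. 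I expect the author to either invoke this pushout triviality or to reduce directly to the vanishing of $H^1_{\text{\'et}}(\Spec(k),G_{\text{\'et}})$ established in the first part.
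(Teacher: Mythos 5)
Your proposal is correct and follows essentially the paper's own route: push out along the maximal \'etale quotient $G\twoheadrightarrow G_{\text{\'et}}$, use that an \'etale scheme over a separably closed field has only $k$-rational points to trivialize the \'etale part (whence $\pi^E(k/k,\bar{x})=\{1\}$), and deduce that $I_{\text{lc}}(k/k,\bar{x})$ is cofinal in $N(k/k,\bar{x})$ --- the paper merely realizes your group-level factoring geometrically, pulling $\phi:P\to P_{\text{\'et}}$ back along the $k$-rational point under $p$ to obtain the local sub-triple $(P^0,G^0,p)\hookrightarrow(P,G,p)$. One slip of wording: a triple $(P,G,p)$ with $G\neq\{1\}$ is never \emph{isomorphic} to the trivial triple (an isomorphism of triples includes an isomorphism of the group schemes); the correct statement, which your conclusion actually uses, is that $(P,G,p)$ receives a morphism from $(\Spec(k),\{1\},\bar{x})$ because the chosen $\bar{k}$-point factors through a $k$-rational point of the trivial torsor --- this factoring is precisely what makes the corresponding homomorphism $\pi^N(k/k,\bar{x})\to G_{\text{\'et}}$ trivial, and it is exactly the point that fails over non-separably-closed fields (cf.\ \ref{euni}).
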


\begin{proof} Let $(P,G,p)\in N(k/k,\bar{x})$ be an object, $G_{\text{\'et}}$ be the maximal \'etale quotient of $G$. Then the quotient map $h: G\twoheadrightarrow G_{\text{\'et}}$ induces, by  \ref{imp}, a triple $(P_{\text{\'et}},G_{\text{\'et}},p)\in I_{\text{\'et}}(k/k,\bar{x})$ and a morphism $$(\phi,h): (P,G,p)\twoheadrightarrow (P_{\text{\'et}},G_{\text{\'et}},p)\in N(k/k,\bar{x}).$$ Since $P_{\text{\'et}}$ is an \'etale scheme over a separably closed field, every point of $P_{\text{\'et}}$ is a $k$-rational point. This means that $P_{\text{\'et}}$ is a trivial $G_{\text{\'et}}$-torsor, and hence $\pi^E(k/k,\bar{x})=\{1\}$. Now we can pull back the map $\phi: P\to P_{\text{\'et}}$ along the $k$-rational point $p\in P_{\text{\'et}}(k)$. Then we get a triple $(P^0,G^0,p)\in I_{\text{lc}}(k/k,\bar{x})$ and a morphism $$(P^0,G^0,p)\hookrightarrow (P,G,p)\in N(k/k,\bar{x}).$$ This means that $I_{\text{lc}}(k/k,\bar{x})$ is cofinal inside $N(k/k,\bar{x})$. By the same argument as in \ref{perfectlemma}, we see that $\pi_L^N$ is an isomorphism.
\end{proof}

\begin{prop}\label{projectivelemma} Let $k$ be a field, $X$ be a complete rational variety over $\bar{k}$, $n\in\N^+$, $x:S\to X$ be any morphism with $S$ connected and non-empty. Then we have $$\pi^N(X/k,x)=\pi^E(X/k,x)=\pi^L(X/k,x)=\pi^G(X/k,x)=\{1\}.$$
\end{prop}

\begin{proof} Let $(P,G,p)\in N(X/k,x)$ be an object. Then by \cite[Chapter II, lemma, pp. 92]{Nori} plus K\"unneth formula \cite[Theorem 2.3]{MS}, $P$ is a trivial $G$-torsor, i.e. $P\cong X\times_kG$. Since $S$ is connected, it is mapped to a connected component $Q$ of $X\times_kG$ via $p: S\to P$. As $X\times_kG\cong X\times_{\bar{k}}\bar{G}$, the composition $$Q_{\rm red}\subseteq Q\subseteq X\times_kG\to X$$  must be an isomorphism, thus the map $p$ factors through a section of  the structure map $P\to X$. This means that there is a unique morphism $$(X,\{1\},x)\to (P,G,p)\in N(X/k,x).$$ Therefore $(X,\{1\},x)$ is a cofinal object in $N(X/k,x)$. By \ref{projlimit}, $\pi^N(X/k,x)=\{1\}$.
\end{proof}
\begin{rmk} The connectedness assumption on $S$ in the above proposition is quite important. Otherwise, we could take $x: P\to X$ to be the natural projection, $P:=\P_{\bar{k}}^1\coprod\P_{\bar{k}}^1$ to be the trivial torsor under $\Z/2\Z$, and $p:P\to P$ to be the identity. In this way, there is no morphism $(X,\{1\},x)\to (P,G,p)\in N(X/k,x).$ Thus the homomorphism $\pi^N(X/k,x)\to (\Z/2\Z)_k$ corresponding to $(P,\Z/2\Z,p)$ is not tivial,  but surjective. Therefore, $\pi^N(X/k,x)$ is not trivial.
\end{rmk}

\begin{prop}\label{affinelemma} Let $k$ be a field of characteristic 0, $X:=\A_{\bar{k}}^n$, $n\in\N^+$, $x:S\to X$ be any morphism with $S$ connected and non-empty. Then we have $$\pi^N(X/k,x)=\pi^E(X/k,x)=\pi^L(X/k,x)=\pi^G(X/k,x)=\{1\}.$$
\end{prop}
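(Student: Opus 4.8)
The plan is to follow the proof of \ref{projectivelemma} almost verbatim, the only genuinely new input being the triviality of torsors. That is, I would first show that for every triple $(P,G,p)\in N(X/k,x)$ the $G$-torsor $P$ over $X$ is trivial; granting this, the connectedness of $S$ forces $p:S\to P$ to factor through a section of $P\to X$, exactly as in \ref{projectivelemma}, so that $(X,\{1\},x)$ becomes a cofinal object of $N(X/k,x)$ and hence $\pi^N(X/k,x)=\{1\}$ by \ref{projlimit}. All the remaining equalities then follow from the surjections of \ref{comparison}.

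The triviality of $P$ is where characteristic $0$ enters, through two classical facts. First, by Cartier's theorem every finite group scheme over a field of characteristic $0$ is \'etale. Applied to $G$ this already gives $N(X/k,x)=I_{\text{\'et}}(X/k,x)$ (so that $\pi^N=\pi^E$, and $\pi^L(X/k,x)=\{1\}$ since the only finite local $k$-group scheme is trivial), and it shows that $\bar{G}:=G\times_k\bar{k}$ is a finite constant group scheme over $\bar{k}$. Since $P\to\Spec(k)$ factors through $\Spec(\bar{k})$, the torsor identity $P\times_kG\cong P\times_XP$ becomes $P\times_{\bar{k}}\bar{G}\cong P\times_XP$, exhibiting $P$ as a $\bar{G}$-torsor over $\A_{\bar{k}}^n$; in particular the structure map $P\to X$ is a finite \'etale cover.

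Second, $\A_{\bar{k}}^n$ is algebraically simply connected in characteristic $0$, i.e. $\pi_1^{\text{\'et}}(\A_{\bar{k}}^n)=\{1\}$, so every finite \'etale cover of $\A_{\bar{k}}^n$ is a disjoint union of copies of $X$. In particular $P\to X$ admits a section, and an \'etale torsor with a section is trivial, whence $P\cong X\times_{\bar{k}}\bar{G}\cong X\times_kG$. With triviality established, the cofinality argument of \ref{projectivelemma} applies word for word and yields $\pi^N(X/k,x)=\{1\}$; the surjections $\pi^N(X/k,x)\twoheadrightarrow\pi^E,\pi^L,\pi^G$ of \ref{comparison} then force $\pi^E=\pi^L=\pi^G=\{1\}$ as well.

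The main (and essentially the only) obstacle is the simple connectedness of $\A_{\bar{k}}^n$, which is exactly where characteristic $0$ is indispensable: in characteristic $p$ the Artin--Schreier covers make $\pi_1^{\text{\'et}}(\A_{\bar{k}}^n)$ huge and the statement is false. I would justify this fact either by citing \cite{SGA1}, or by invoking the Lefschetz principle to reduce to $\bar{k}=\C$, where $\A_{\C}^n$ is the contractible space $\C^n$ and the Riemann existence theorem gives $\pi_1^{\text{\'et}}=\widehat{\pi_1^{\mathrm{top}}}=\{1\}$.
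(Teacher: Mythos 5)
Your proposal is correct and follows essentially the same route as the paper, whose proof is just the two-line remark that in characteristic $0$ every finite torsor over $X$ is \'etale and every \'etale torsor over $\A_{\bar{k}}^n$ is trivial, after which one repeats the argument of \ref{projectivelemma}. You have merely filled in the details the paper leaves implicit (Cartier's theorem for \'etaleness, simple connectedness of $\A_{\bar{k}}^n$ for triviality, and the cofinality of $(X,\{1\},x)$), all of which check out.
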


\begin{proof} The point is that in this case any finite torsor over $X$ is \'etale and any \'etale torsor over $X$ is trivial. Then we do \ref{projectivelemma} again.
\end{proof}

\subsection{The \'Etale Piece of the Arithmetic Fundamental Group Scheme}\label{suprising}

\begin{thm}\label{R} Let $\R$ be the field of real numbers, $\bar{x}:\Spec(\C)\to \Spec(\R)$ be the morphism corresponding to the natural inclusion $\R\subset \C$. Then $$\pi^E(\R/\R,\bar{x})=\varprojlim_{n\in\N^+}\mu_{n,\R}$$ is an infinite   $\R-$group scheme, and the universal covering corresponding to  $\pi^E(\R/\R,\bar{x})$ is a non-Noetherian affine scheme with infinitely many connected components.
\end{thm}
\begin{proof} Let $(P,G,p)\in I_{\text{\'et}}(\R/\R,\bar{x})$. Then $P(\C)$ is a principal homogeous space under $G(\C)$. By Galois descent, there is an action of $\Gal(\C/\R)=\langle \sigma\rangle$ on $P(\C)$ via set-theoretical automorphisms and  an action of $\Gal(\C/\R)=\langle \sigma\rangle$ on $G(\C)$ via group automorphisms such that these two actions are compatible. Let $\sigma(p)=pa$ for some $a\in G(\C)$, $n\in\N^+$ denote the order of $a$. Then for any $b\in G(\C)$, $\sigma(pb)=\sigma(p)\sigma(b)=\ pa\sigma(b).$
But $\sigma^2=id$ is trivial, so $pb=\sigma^2(pb)=\sigma(pa\sigma(b))=\sigma(p)\sigma(a)b=pa\sigma(a)b.$ Thus $a\sigma(a)=e$ is trivial, so  $\sigma(a)=a^{-1}$. Let $Q_n(\C)\subseteq P(\C)$ be the subset $\{pa^i| i\in\N\}$, $H_n(\C)\subseteq G(\C)$ be the subgroup $\langle a\rangle$. These substructures are clearly stable under the $\Gal(\C/\R)$-actions, so they descend to $\R$, i.e. we have a subobject $$\hspace{20pt}(Q_n,H_n,p)\subseteq (P,G,p)\in  I_{\text{\'et}}(\R/\R,\bar{x}),$$ where the set of $\C$-points of $Q_n$ is $Q_n(\C)$ and the group of $\C$-points of $H_n$ is $H_n(\C)$.
Let $$(P_n,\mu_{n,\R},p_n):=(\Spec(\R[x]/(x^{n}+1)),\Spec(\R[x]/(x^{n}-1)),\mathrm{e}^{\frac{(2n-1)\pi i}{n}})\in I_{\text{\'et}}(\R/\R,\bar{x})$$ where 
the action of $\mu_{n,\R}$ on $P_n$ is defined simply by multiplying a $n-$th root of unity on a root of $x^n+1=0$ in $\C$ and $\mathrm{e}^{\frac{(2n-1)\pi i}{n}}$ is  the $n$-th  root $\cos(\frac{(2n-1)\pi i}{n})+i\sin(\frac{(2n-1)\pi i}{n})$.  By sending $a\mapsto\mathrm{e}^{\frac{2\pi i}{n}}$ we get an isomorphism $h:H_n\cong \mu_{n,\R}=\Spec(\R[x]/(x^n-1))$. By sending $p\mapsto\mathrm{e}^{\frac{(2n-1)\pi i}{n}}$   we get an isomorphism of $\R$-schemes $\phi:Q_n\cong\Spec(\R[x]/(x^n+1))$ which is compatible with  $h$ under the actions. This means that the full subcategory of $I_{\text{\'et}}(\R/\R,\bar{x})$ consisting of objects of the form $(P_n,\mu_{n,\R},p_n)$ is cofinal.

On the other hand, the triple $(P_n,\mu_{n,\R},p_n)$ is $ I_{\text{\'et}}$-saturated.  If we have a subobject $$(Q,H,p)\subseteq (P_n,\mu_{n,\R},p_n)\in I_{\text{\'et}}(\R/\R,\bar{x})$$ then $p_n=p\in Q(\C)$ implies $\mathrm{e}^{\frac{\pi i}{n}}\in Q(\C)$ for $Q(\C)$ should always contain  the $\Gal(\C/\R)$-orbit, i.e. the complex conjugation,  of $p=p_n=\mathrm{e}^{\frac{(2n-1)\pi i}{n}}$. Therefore, by the equation$$p_n\mathrm{e}^{\frac{2\pi i}{n}}=\mathrm{e}^{\frac{(2n-1)\pi i}{n}}\cdot \mathrm{e}^{\frac{2\pi i}{n}}=\mathrm{e}^{\frac{\pi i}{n}}$$ we have $\mathrm{e}^{\frac{2\pi i}{n}}\in H(\C)$. Since $H(\C)$ contains the generator of the $n$-th cyclic group $\mu_{n,\R}(\C)$, we have $H(\C)=\mu_{n,\R}(\C)$. Or equivalently, $H=\mu_{n,\R}$ and $Q=P_n$. Thus $(P_n,\mu_{n,\R},p_n)$ is an $ I_{\text{\'et}}$-saturated object.

Now if $m,n\in\N^+$ and $m|n$, then we can define a "raise to $\frac{n}{m}$-power" map $$(P_{n},\mu_{n,\R},p_{n})\to(P_{m},\mu_{m,\R},p_m)$$   by sending $x\mapsto x^{\frac{n}{m}}$ in the affine coordinate ring. This defines a projective system in $I_{\text{\'et}}(\R/\R,\bar{x})$. By taking projective limit in the category of affine schemes (resp. group schemes) over $\R$, we get a triple $$(\varprojlim_{n\in\N^+}P_n, \varprojlim_{n\in\N^+}\mu_{n,\R}, \tilde{p}).$$

Let $(\widetilde{X}_{\bar{x}},\pi^E(\R/\R,\bar{x}),\tilde{x})$ be the universal triple defined in \ref{unicov}. Then by the universality, we get a morphism $$(\widetilde{X}_{\bar{x}},\pi^E(\R/\R,\bar{x}),\tilde{x})\longrightarrow (\varprojlim_{n\in\N^+}P_n, \varprojlim_{n\in\N^+}\mu_{n,\R}, \tilde{p})$$ which is indeed an isomorphism because of the fact that $\{(P_n,\mu_{n,\R},p_n)|n\in\N^+\}$ is  cofinal and saturated in $ I_{\text{\'et}}(\R/\R,\bar{x})$. This proves that $\pi^E(\R/\R,\bar{x})$ is infinite and also that $\widetilde{X}_{\bar{x}}$ has infinitely many connected components. Since $\widetilde{X}_{\bar{x}}$ is affine, it must be quasi-compact. But then the connected components of $\widetilde{X}_{\bar{x}}$ can not be open, otherwise there should be finitely many of them. Therefore $\widetilde{X}_{\bar{x}}$ is not Noetherian.
\end{proof}

\begin{prop}\label{fields} Let $k$ be  field whose Galois group $\Gal(\bar{k}/k)$ admits $\Z/l\Z$ as a quotient for some prime number $l> 3$. Let $X=\Spec(k)$, $\bar{x}:\Spec(\bar{k})\to X$ be a geometric point. Then $\pi^E(k/k,\bar{x})$ is a non-commutative $k$-group scheme.
\end{prop}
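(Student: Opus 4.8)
The plan is to exhibit a single non-commutative finite quotient of $\pi^E(k/k,\bar{x})$. Writing $\Gamma:=\Gal(\bar{k}/k)$ and using \ref{projlimit}, we have $\pi^E(k/k,\bar{x})\cong\varprojlim_i G_i$ over the $I_{\text{\rm\'et}}$-saturated objects, so it suffices to produce one $I_{\text{\rm\'et}}$-saturated triple $(P,G,p)$ whose group scheme $G$ is non-commutative: the projection $\pi^E(k/k,\bar{x})\twoheadrightarrow G$ is then a surjection of group schemes onto a non-commutative one, and since commutativity of a group scheme is inherited by quotients, $\pi^E(k/k,\bar{x})$ cannot be commutative.

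Next I would set up the dictionary between such triples and non-abelian cohomology. By Galois descent (Notation (ix)), an \'etale $k$-group scheme $G$ is the same as a finite group $A:=G(\bar{k})$ with a continuous $\Gamma$-action by group automorphisms, and a $G$-torsor $P$ over $\Spec(k)$ equipped with a $\bar{k}$-point $p$ is the same, after trivialising by $p$, as a continuous $1$-cocycle $c:\Gamma\to A$ determined by $\gamma(p)=p\,c(\gamma)$, i.e. $c(\gamma\delta)=c(\gamma)\,\gamma(c(\delta))$ (compare the computation $\sigma(pb)=pa\,\sigma(b)$ in the proof of the previous proposition). Reducing the structure group of the pointed torsor to a subgroup $H\subseteq G$ amounts to requiring that $B:=H(\bar{k})$ be a $\Gamma$-stable subgroup of $A$ containing all values $c(\gamma)$; hence by \ref{imp} and \ref{saturated} the $I_{\text{\rm\'et}}$-saturation of $(P,G,p)$ has group scheme corresponding to the \emph{smallest} $\Gamma$-stable subgroup $B\subseteq A$ containing $c(\Gamma)$. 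The task thus becomes: find $A$ with a $\Gamma$-action factoring through a chosen surjection $\Gamma\twoheadrightarrow\Z/l\Z=\langle\sigma\rangle$, and a cocycle $c$, so that this smallest $\Gamma$-stable subgroup is non-abelian.

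Finally I would write down an explicit $A$. Fix an odd prime $p$ and let $A$ be the free $2$-step nilpotent group of exponent $p$ on generators $x_0,\dots,x_{l-1}$, modulo the single relation $x_0x_1\cdots x_{l-1}=e$; this is a finite $p$-group, and it is non-abelian because eliminating $x_{l-1}$ leaves $l-1\ge 2$ generators whose images $x_0,x_1$ still fail to commute. Let $\sigma$ act by the cyclic permutation $x_i\mapsto x_{i+1}$ (indices mod $l$); this is a well-defined automorphism of order $l$, since the power and class-$2$ relations are visibly $\sigma$-stable and $\sigma(x_0\cdots x_{l-1})=x_1\cdots x_{l-1}x_0=x_0^{-1}(x_0\cdots x_{l-1})x_0$ lies in the normal closure of the relator. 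Pulling this action back along $\Gamma\twoheadrightarrow\Z/l\Z$ and taking the cocycle $c$ with $c(\sigma)=a:=x_0$ (legitimate because a cocycle on the cyclic group $\Z/l\Z$ with $c(\sigma)=a$ exists exactly when the norm $a\,\sigma(a)\cdots\sigma^{l-1}(a)=x_0x_1\cdots x_{l-1}=e$ vanishes, which holds by construction) yields a triple $(P,G,p)\in I_{\text{\rm\'et}}(k/k,\bar{x})$. The smallest $\Gamma$-stable subgroup containing $c(\Gamma)\ni x_0$ must contain every $\sigma^i(x_0)=x_i$, hence equals $A$; so the saturation of $(P,G,p)$ has non-abelian group $A$, which finishes the argument.

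The main obstacle is the second paragraph: making precise, inside the paper's framework, that the image (saturation) of the classifying map $\pi^E(k/k,\bar{x})\to G$ corresponds exactly to the smallest $\Gamma$-stable subgroup generated by the cocycle's image, so that one can control when it remains non-abelian. Once that identification is in hand, the group-theoretic construction is explicit and essentially forced.
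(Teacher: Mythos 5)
Your proof is correct, and while it shares the paper's overall strategy, the witness and the handling of the cocycle condition are genuinely different, in a way worth comparing. The paper also reduces to exhibiting one $I_{\text{\'et}}$-saturated triple $(P,G,p)$ with $G$ non-commutative, builds it by Galois-descending a constant group scheme along a twisted action (a cocycle in your language), and shows the classifying map is surjective because the image contains the cocycle value and is Galois-stable. But the paper's group is $G_K=(\Z/l\Z\times\Z/l\Z)\rtimes\langle b\rangle$ with $\sigma$ fixing the normal part and sending $b\mapsto\bigl(\begin{smallmatrix}0\\1\end{smallmatrix}\bigr)b$, and there the delicate point — isolated in the Remark following the proof — is that the twisted action is an action of $\Z/l\Z$ at all, i.e.\ that the norm $b\,\sigma(b)\cdots\sigma^{l-1}(b)$ is trivial; this reduces to $l$ dividing $\frac{1}{6}(l-1)l(2l-1)-\frac{1}{2}l(l-1)$, which is exactly where $l>3$ enters. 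You instead engineer the norm condition into the presentation (the relation $x_0x_1\cdots x_{l-1}=e$ with $\sigma$ cyclically permuting the $x_i$), so the cocycle exists by construction, the Galois-generation of all of $A$ from $c(\sigma)=x_0$ is automatic, and your construction in fact already works for $l=3$, slightly improving the hypothesis. What you flag as the "main obstacle" is not one in the paper's framework: you only need the containment that the saturation's group contains the smallest $\Gamma$-stable subgroup containing $c(\Gamma)$, and this is proved exactly as in the paper's own computation — if $(Q,H,p)$ is the subobject furnished by \ref{imp} and \ref{saturated}, then $Q(\bar{k})$ is $\Gamma$-stable and contains $p$, so $\gamma(p)=p\,c(\gamma)\in Q(\bar{k})=p\,H(\bar{k})$ forces $c(\gamma)\in H(\bar{k})$, and $H(\bar{k})$ is $\Gamma$-stable since $H$ is defined over $k$; the paper runs precisely this argument with $c(\sigma)=b$. (The reverse containment, which you do not need, also holds by descending the sub-principal-homogeneous-space $p\,B$, as the paper does for $(Q_n,H_n,p)$ in the real case.)

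Two small points would deserve a line in a final write-up. First, continuity and descent should be run over the finite Galois subextension $K:=\bar{k}^{\,\ker(\Gamma\to\Z/l\Z)}$ using Notation (ix)--(x), exactly as the paper does, since Galois descent there is stated for finite extensions. Second, the non-abelianness of $A$ is asserted a bit quickly: in the free class-$2$ exponent-$p$ group on $x_0,\dots,x_{l-1}$, the normal closure of $w=x_0\cdots x_{l-1}$ meets the commutator subgroup $\Lambda^2\F_p^l$ exactly in $\bigl(\sum_i e_i\bigr)\wedge\F_p^l$, which does not contain $e_0\wedge e_1$ when $l\ge 3$; hence $[x_0,x_1]\neq e$ in $A$ (and comparing orders shows $A$ is in fact the free class-$2$ exponent-$p$ group on $l-1$ generators, vindicating your elimination heuristic). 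With these lines added, your argument is a complete and clean alternative to the paper's proof.
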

\begin{proof}  Let $k\subseteq K\subseteq \bar{k}$  a finite Galois subextension so that $\Gal(K/k)=\langle \sigma\rangle\cong\Z/l\Z $. Let $G_K:=(\Z/l\Z\times \Z/l\Z)\rtimes\langle b\rangle$, where $\langle b\rangle\cong \Z/l\Z$ acts on  $\Z/l\Z\times \Z/l\Z$ by $$b\ \ \ \longmapsto \ \ \ \left(\begin{array}{cc}
1 & 1 \\
0 & 1
\end{array}\right).$$ 
We  define an action of $\Gal(K/k)$ on $G_K$ by letting $\sigma (z)=z$ for all $z\in \Z/l\Z\times \Z/l\Z$ and $\sigma(b)=\left(\begin{array}{c}
0  \\
1 
\end{array}\right)b$. This action corresponds, by Galois descent, to a $k$-group scheme $G$ which is a $k$-form of the   $K$-group scheme $G_K$.

The   constant $K$-group scheme $G_K$ can be written as $$G_K:=\coprod_{i\in G_K}Y_i$$ where $Y_i=\Spec(K)$. $G_K$ acts on itself by right translations, i.e.  for any $j\in G_K$, $j$ acts on $Y_i$ by the identity map $\Spec(K)=Y_i\to Y_{ij}=\Spec(K)$. Now we define a twisted action of $\Gal(K/k)$ on the $K$-scheme $G_K$. We define the action of $\sigma$ on $Y_i$ to be  the morphism $\tau$ in the following commutative diagram $$\xymatrix{Y_i\ar[r]^-{\tau}\ar@{=}[d]&Y_{b\sigma(i)}\ar@{=}[d]\\\Spec(K)\ar[r]^{t_{\sigma^{}}}&\Spec(K)}$$where $t_{\sigma}$ is the map obtained by applying the functor $\Spec(-)$ to the field automorphism $\sigma: K\to K$. We have the following compatiblity between the action of $\Gal(K/k)$ on the $K$-group scheme $G_K$ and that on the $K$-scheme $G_K$, i.e. the diagram $$\xymatrix{Y_i\ar[r]^j\ar[d]_-{\tau}&Y_{ij}\ar[d]^-{\tau}\\Y_{b\sigma(i)}\ar[r]^-{\sigma(j)}&Y_{b\sigma(ij)}}$$ is commutative for any $j\in G_K$. By Galois descent, the $K$-scheme $G_K$ descends to a $k$-scheme $P$ and there is an action of $G$ on $P$ which makes $P$ a $G$-torsor over $k$. Picking  $p\in Y_{e}(\bar{k})$ to be the inclusion $K\subseteq \bar{k}$, we get an object $(P,G,p)\in I_{\text{\'et}}(k/k,\bar{x})$. This object induces a $k$-homomorphism $$\lambda:\pi^E(k/k,\bar{x})\longrightarrow G.$$ Let $N\subseteq G$ be the image. Then we get a subobject $(Q,N,p)\subseteq (P,G,p)$. As $p\in Q$, $Y_e\subseteq Q_K\Rightarrow Y_b=\sigma(Y_e)\subseteq Q_K\Rightarrow b\in N_K\subseteq G_K$. But $N_K\subseteq G_K$ is stable under the Galois action, so  $\sigma(b)=\left(\begin{array}{c}
0  \\
1 
\end{array}\right)b\in N_K\Longrightarrow\left(\begin{array}{c}
0  \\
1 
\end{array}\right)\in N_K\Longrightarrow\left(\begin{array}{c}
1  \\
1 
\end{array}\right)=b\left(\begin{array}{c}
0  \\
1 
\end{array}\right)b^{l-1}\in N_K$. Thus  $N_K=G_K$ for $\{\left(\begin{array}{c}
1  \\
1 
\end{array}\right),\left(\begin{array}{c}
0  \\
1 
\end{array}\right),b\}$ generates $G_K$. Therefore $\lambda$ is surjective. Then $\pi^E(k/k,\bar{x})$ must be non-commutative for $G$ is.
\end{proof}

\begin{rmk} The point of the assumption $l>3$ is that one needs the action of $\sigma^l$ on $G_K$ to be trivial, i.e. one needs that 
$b\sigma(b)\sigma^2(b)\cdots\sigma^{l-1}(b)$ to be trivial in $G_K$. For this one needs $$1^2+2^2+3^2+\cdots+(l-1)^2=\frac{1}{6}(l-1)l(2l-1)$$ to be divisible by $l$. This is OK only when the prime number $l>3$. 
\end{rmk}

\begin{ex}\label{AB} The notion of $\pi_1^{\text{\'et}}(X,x)$ is \textit{absolute}, i.e. it has no reference to the base field, so $X$ could even be a scheme of mixed characteristic. The notion of $\pi^N(X,x)$ in \cite{Nori} depends only on the base field where $X$ is defined. However, the fundamental group we are considering here depends  also on the  field where the group structure is defined. 

 By a theorem of Serre-Lang, it is known that for $X/\bar{k}$ an abelian variety $\pi_1^{\text{\'et}}(X,0)=\varprojlim_{n\in\N^+}X[n](\bar{k})$, or more generally, Nori proved in \cite{Nori2} that $\pi^{N}(X,0)=\varprojlim_{n\in\N^+}X[n]$. Since our fundamental group is a generalization of \cite{Nori}, we still have  $\pi^{N}(X/\bar{k},0)=\pi^{N}(X,0)=\varprojlim_{n\in\N^+}X[n]$.  However, if we see $X$ as a scheme over $k$ via $X\to \Spec(\bar{k})\to \Spec(k)$ then we really get something different. In this example we take  an abelian variety $X$ over $\C$ and view it as a scheme over $\R$,  then show that $\pi^N(X/{\R},0)$ is non-commutative. 

Let $A$ be an abelian variety over $k:=\R$, $K:=\C$, $\bar{x}\in A_K(K)$. Take any Galois covering $Y\to A_{K}$ with Galois group $\Z/2\Z$. Let $G_K:=\langle a\rangle\rtimes \langle b\rangle$, where $\langle a\rangle\cong\Z/n\Z$ for $n\geq 3\in\N^+$  and   $\langle b\rangle \cong \Z/2\Z$ acts on $\langle a\rangle$ by $b(z)=z^{-1}$ for all $z\in \langle a\rangle$. We  define an action of $\Gal(K/k)$ on $G_K$ by letting $\sigma (z)=z^{-1}$ for all $z\in \langle a\rangle$ and $\sigma(b)=ab$. Then there is a  $k$-form $G$  of the   $K$-group scheme $G_K$ which corresponds to this action.

Let $H_K\subset G_K$ denote the subgroup  $\langle a\rangle$, and let $$P_K:=\coprod_{i\in H_K}Y_i$$ where $Y_i=Y$. Now we define an action of $G_K$ on $P_K$. Take any $g\in G_K$, we can write it uniquely as $g=b^rj$, where $r\in\{0,1\}$ and $j\in H_K$, then the action of $g$ on $Y_i$ is defined to be the morphism $\tau$ in the following commutative diagram $$\xymatrix{Y_i\ar[r]^-{\tau}\ar@{=}[d]&Y_{b^{2-r}(i)j}\ar@{=}[d]\\Y\ar[r]^{b^{r}}& Y}$$
where $b^r$ is the map defined by the non-trivial $A_K$-automorphism of $Y$ if $r=1$, the identity if $r=0$. In this way, $P_K$ becomes a $G_K$-torsor over $A_K$. Now viewing $G_K$ as a constant $K$-group scheme we get a morphism $$\rho:P_K\times_{\Spec(K)}G_K\longrightarrow{}P_K$$ defined by the above action. Composing $\rho$ with the following isomorphism $$P_K\times_{\Spec(K)}(\Spec(K)\times_{\Spec(k)}G)\xrightarrow{\cong}P_K\times_{\Spec(K)}G_K$$ we get an action of $G$ on $P_K$ which makes $P_K$ a $G$-torsor over $A_K$. Picking any $k$-morphism $p:\Spec(K)\to Y_{e}=Y$ (where $e\in H_K$ is the identity element) over $\bar{x}$, we get an object $(P_K,G,p)\in I_{\text{\'et}}(A_K/k,\bar{x})$. This object induces a $k$-homomorphism $$\lambda:\hspace{20pt}\pi^E(A_K/k,\bar{x})\longrightarrow G.$$ Let $N\subseteq G$ be the image. Then we get a subobject $(Q,N,p)\subseteq (P_K,G,p)$. As $p\in Q$, $Y_e\subseteq Q$. Thus $b\in N_K\subseteq G_K$ (because $Y_e\subseteq P_K$ is a torsor under $\langle b\rangle\subseteq G_K$). But $N_K\subseteq G_K$ is stable under the Galois action, so  $\sigma(b)=ab\in N_K\Longrightarrow a\in N_K\Longrightarrow N_K=G_K$. Therefore $\lambda$ is surjective. Then $\pi^N(A_K/k,\bar{x})=\pi^E(A_K/k,\bar{x})$ must be non-commutative for $G$ is.
\end{ex}

\subsection{Comparison of the Geometric Fundamental Groups}\label{geometricfundamentalgroup}

Let $X$ be a separable  geometrically connected scheme over a field $k$,  and $\bar{x}:\Spec(\bar{k})\hookrightarrow {X}$ be a geometric point. Associated to the arithmetic fundamental group scheme $\pi^N(X/k,\bar{x})$, there are two geometric fundamental group schemes $\pi^N(\bar{X}/{k},\bar{x})$ and $\pi^N(\bar{X}/\bar{k},\bar{x})$, the later being the \textit{classical} Nori's fundamental group. We would like to understand the relation between these two. 

\begin{prop}\label{comparison0} Notations and assumptions being as above, if $X$ is moreover quasi-compact and $k$ is perfect,  then
we have an imbedding
$$\chi_{X/k}^N: \hspace{20pt}\pi^N({\bar{X}}/\bar{k},\bar{x})\hookrightarrow\pi^N({\bar{X}}/k,\bar{x})\times_k\bar{k}$$
of $\bar{k}-$group schemes. A similar statement holds if one replaces $N$ by $E,G,L$.
\end{prop}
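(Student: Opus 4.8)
The plan is to produce $\chi^N_{X/k}$ from a base-change functor and then verify it is a closed immersion by a Tannakian subquotient criterion, the essential input being a Weil restriction. First I would construct the homomorphism. Base change of group schemes gives a functor $B\colon N(\bar X/k,\bar x)\to N(\bar X/\bar k,\bar x)$ sending $(P,G,p)\mapsto (P,G\times_k\bar k,p)$: since $P$ is a $\bar k$-scheme one has $P\times_kG=P\times_{\bar k}(G\times_k\bar k)$, so the given $G$-action exhibits $P$ as a torsor under the finite $\bar k$-group scheme $G_{\bar k}:=G\times_k\bar k$, and $p$ still lies over $\bar x$. Composing with \ref{projlimit} and the functoriality of $\varprojlim$, the functor $B$ induces the natural homomorphism $\chi^N_{X/k}\colon \pi^N(\bar X/\bar k,\bar x)\to \pi^N(\bar X/k,\bar x)\times_k\bar k$ of $\bar k$-group schemes. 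Separability and geometric connectedness of $X$ guarantee that $\bar X$ is reduced and connected, so both categories, hence both groups, are defined.

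Next I would reduce closedness of the immersion to a statement about torsors. By the Tannakian criterion for a morphism of affine group schemes to be a closed immersion \cite{Del}, it suffices to show that every object of $N(\bar X/\bar k,\bar x)$ admits a morphism to a base-changed object with \emph{injective} group map: for each $(Q,H,q)\in N(\bar X/\bar k,\bar x)$ there should exist $(P,G,p)\in N(\bar X/k,\bar x)$ and a morphism $(Q,H,q)\to B(P,G,p)=(P,G_{\bar k},p)$ in $N(\bar X/\bar k,\bar x)$ whose group homomorphism $\iota\colon H\hookrightarrow G_{\bar k}$ is a closed immersion. (Equivalently one checks directly on the limits \ref{projlimit} that such embeddings force $\ker\chi^N_{X/k}=\{1\}$, reflecting the kernel along the injective $\iota$.) Indeed, given such data any $H$-representation is a subquotient of the restriction along $\iota$ of a $G_{\bar k}$-representation, the latter pulls back through the surjection $\pi^N(\bar X/k,\bar x)\times_k\bar k\twoheadrightarrow G_{\bar k}$, and the compatibility supplied by the morphism to $B(P,G,p)$ realizes the $H$-representation as a subquotient of $(\chi^N_{X/k})^{\ast}$ of a representation of $\pi^N(\bar X/k,\bar x)\times_k\bar k$.

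Then comes the heart of the argument, where both hypotheses enter. Since $X$ is quasi-compact and $\bar X=\varprojlim_{K}X_K$ over the finite subextensions $k\subseteq K\subseteq\bar k$, a standard spreading-out argument descends the finite torsor $(Q,H,q)$ to some finite level: there are a finite extension $K/k$, a finite $K$-group scheme $H_K$, an $H_K$-torsor $Q_K$ over $X_K$ and a point $q_K$ whose base change along a fixed embedding $\sigma_0\colon K\hookrightarrow\bar k$ recovers $(Q,H,q)$. Because $k$ is perfect the extension $K/k$ is separable, and after enlarging $K$ we may take it Galois. Set $G:=\Res_{K/k}(H_K)$; separability makes this a \emph{finite} $k$-group scheme, and $G_{\bar k}\cong\prod_{\sigma\colon K\hookrightarrow\bar k}{}^{\sigma}H$, the factor at $\sigma_0$ being exactly $H$. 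The inclusion of this factor is a closed immersion $\iota\colon H\hookrightarrow G_{\bar k}$, and the contracted product $P:=Q\times^{H}_{\bar k}G_{\bar k}$ is a $G_{\bar k}$-torsor over $\bar X$, hence — by the same identification $P\times_kG=P\times_{\bar k}G_{\bar k}$ used above — an object $(P,G,p)\in N(\bar X/k,\bar x)$. The canonical map $Q\to P$, $q\mapsto p$, is the required morphism $(Q,H,q)\to B(P,G,p)$ with injective group map $\iota$, which proves $\chi^N_{X/k}$ is a closed immersion.

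For the variants one runs the same argument, checking only that $G=\Res_{K/k}(H_K)$ stays in the relevant class: Weil restriction along the finite separable $K/k$ preserves étaleness (case $E$) and the connectedness of $G_{\bar k}=\prod_{\sigma}{}^{\sigma}H$ (case $L$); for the constant case one may even take $G$ to be the constant $k$-group scheme on the abstract group underlying $H$, with $\iota$ the identity, since a constant $\bar k$-group torsor is automatically a constant $k$-group torsor over $\bar X$. I expect the main obstacle to be the finiteness and the product description of $\Res_{K/k}(H_K)$: this is exactly where perfectness of $k$ is indispensable, for when $K/k$ is inseparable the Weil restriction of a finite group scheme need not be finite and the embedding $\iota$ can fail, while quasi-compactness of $X$ is precisely what makes the initial descent of $(Q,H,q)$ to a finite level possible.
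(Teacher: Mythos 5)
Your proof is correct, but it takes a genuinely different route from the paper's on the key injectivity step. The paper constructs $\chi^N_{X/k}$ exactly as you do (base change of triples, then pass to the limit), but it defers injectivity to Proposition \ref{exactness2}, which proves the \emph{stronger} statement that the composite $\theta:\pi^N(\bar X/\bar k,\bar x)\to\pi^N(X/k,\bar x)\times_k\bar k$ into the fundamental group of $X$ itself is injective: there, a saturated triple is descended to a finite separable level $X\times_kl$ (this is where quasi-compactness and perfectness enter for the paper), the \emph{torsor} is Weil-restricted along $X\times_kl\to X$ per \cite{BLR}, and the adjunction counit $(\Res_{X_l/X}(P)\times_k\bar k,\Res_{l/k}(G)\times_k\bar k,q)\to(P,G,p)$ — with \emph{surjective} group map — combines with \ref{fundamentalrmk} to place every object isomorphically in the essential image of $\theta^*$, whence \cite[Proposition 2.21]{DM}. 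You instead stay entirely over $\bar X$: you Weil-restrict only the \emph{group} along $K/k$, embed $H$ as the $\sigma_0$-factor of $\Res_{K/k}(H_K)_{\bar k}\cong\prod_\sigma{}^\sigma H$ (a closed immersion, going the opposite direction from the paper's counit), extend the structure group of $Q$ by a contracted product, and invoke the subquotient half of the same criterion. Your route is simpler — no Weil restriction of torsors, no connectedness argument for the counit $\phi$ — and in fact simpler than you realize: since only $H_K$ is used, the spreading-out of the torsor $Q$ is superfluous (a finite $\bar k$-group scheme is automatically defined over a finite subextension, its Hopf algebra having finitely many structure constants), so quasi-compactness of $X$ is not needed in your argument at all; it is needed only for the paper's stronger \ref{exactness2}, which your argument does not recover. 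Two cosmetic points: your parenthetical appeal to a \emph{surjection} $\pi^N(\bar X/k,\bar x)\times_k\bar k\twoheadrightarrow G_{\bar k}$ is unjustified (your $(P,G,p)$ need not be saturated) but also unnecessary, since the subquotient criterion needs no saturation; and the criterion you want is \cite[Proposition 2.21]{DM} rather than \cite{Del}.
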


\begin{proof} Given  $(P,G,p)\in N(\bar{X}/k,\bar{x})$,  $(P,G\times_k\bar{k},p)$ is naturally an object in $N(\bar{X}/\bar{k},\bar{x})$.  In this way we get a functor $F$ which makes the following diagram 2-commutative $$\xymatrix{ N(\bar{X}/k,\bar{x})\ar[rr]^-F\ar[dr]_{\varphi}&& N(\bar{X}/\bar{k},\bar{x})\ar[ld]^{\psi}\\&\text{Grsch}(\bar{k})&}$$  where $\varphi$ is the functor sending $(P,G,p)\in  N(\bar{X}/k,\bar{x})$ to $G\times_k\bar{k}$, and $\psi$ is the forgetful functor sending $(Q,H,q)\in  N(\bar{X}/\bar{k},\bar{x})$ to  $H$. Since base change is compatible with taking projective limit, we have $$\varprojlim_{i\in  N(\bar{X}/k,\bar{x})}\varphi(i)=\pi^N(\bar{X}/k,\bar{x})\times_k\bar{k}.$$ Therefore, we get the homomorphism $$\chi_{X/k}^N: \hspace{20pt}\pi^N({\bar{X}}/\bar{k},\bar{x})\longrightarrow\pi^N(\bar{X}/k,\bar{x})\times_k\bar{k}$$ by passing to the limit. The injectivity of $\chi_{X/k}^N$ is proved in \ref{exactness2}.
\end{proof}

\begin{prop}\label{comparison2} If $X$ is a  connected scheme over any field $k$ with a geometric point $\bar{x}\in X(\bar{k})$, and if $X$ is also a $\bar{k}$-scheme (e.g. $X=Y\times_k\bar{k}$ for some $k$-scheme $Y$), then
the  immbedding
$$\chi_{X/\bar{k}/k}^E: \hspace{5pt}\pi^G({X}/k,\bar{x})\times_k\bar{k}=\pi^G({X}/\bar{k},\bar{x})=\pi^E({X}/\bar{k},\bar{x})\hookrightarrow\pi^E({X}/k,\bar{x})\times_k\bar{k}$$
of $\bar{k}-$group schemes is a section of the quotient map$$\pi_G^E\times_k\bar{k}:\hspace{5pt}\pi^{E}({X}/k,\bar{x})\times_k\bar{k}\twoheadrightarrow \pi^{G}({X}/k,\bar{x})\times_k\bar{k}.$$ \end{prop}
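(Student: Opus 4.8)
The plan is to compute everything at the level of the defining projective systems (\ref{projlimit}) and to verify that the composite $(\pi_G^E\times_k\bar{k})\circ\chi_{X/\bar{k}/k}^E$ is the identity by testing it against the structural projections of the target. First I recall the functorial origin of each arrow. As in the proof of \ref{comparison0}, the embedding $\chi_{X/\bar{k}/k}^E$ is induced on projective limits by the base-change functor
$$F\colon I_{\text{\'et}}(X/k,\bar{x})\to I_{\text{\'et}}(X/\bar{k},\bar{x}),\qquad (P,G,p)\mapsto(P,G\times_k\bar{k},p),$$
which is legitimate precisely because $X$ is an $\bar{k}$-scheme: for $P\to X$ one has $P\times_kG=P\times_{\bar{k}}(G\times_k\bar{k})$, so the $G$-action becomes a $G_{\bar{k}}$-action. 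The surjection $\pi_G^E$ is induced, via \ref{relation}, by the inclusion of index categories $I_{\text{co}}(X/k,\bar{x})\subseteq I_{\text{\'et}}(X/k,\bar{x})$. Finally, the chain of equalities in the statement is realized by the analogous base-change functor $F^G\colon I_{\text{co}}(X/k,\bar{x})\to I_{\text{co}}(X/\bar{k},\bar{x})$ together with the fact that over the algebraically closed field $\bar{k}$ every finite \'etale group scheme is constant, so $I_{\text{\'et}}(X/\bar{k},\bar{x})=I_{\text{co}}(X/\bar{k},\bar{x})$ and hence $\pi^E(X/\bar{k},\bar{x})=\pi^G(X/\bar{k},\bar{x})$.

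The first substantive point is that $F^G$ is an \emph{equivalence}, and this is exactly where the hypothesis that $X\to\Spec(k)$ factors through $\Spec(\bar{k})$ is used: a torsor over $X$ under the constant group scheme $\underline{G}_k$ and one under $\underline{G}_{\bar{k}}$ are the same datum, since both pull back to the constant $X$-group scheme $\underline{G}_X$, and the $\bar{k}$-pointing over $\bar{x}$ is identical in both descriptions. Reindexing the projective limit along this equivalence yields the isomorphism $\Psi\colon\pi^G(X/\bar{k},\bar{x})\xrightarrow{\cong}\pi^G(X/k,\bar{x})\times_k\bar{k}$ written as an equality in the statement; concretely $\Psi$ is characterized by $(p^G_{(P,G,p)}\times_k\bar{k})\circ\Psi=\rho_{(P,G_{\bar{k}},p)}$ for every $(P,G,p)\in I_{\text{co}}(X/k,\bar{x})$, where $p^G_{(P,G,p)}\colon\pi^G(X/k,\bar{x})\to G$ and $\rho_{(P,G_{\bar{k}},p)}\colon\pi^G(X/\bar{k},\bar{x})\to G_{\bar{k}}$ are the structural projections.

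The second point is the compatibility $F|_{I_{\text{co}}(X/k,\bar{x})}=F^G$: the functor $F$ carries a constant $k$-group scheme $G$ to $G_{\bar{k}}$, still constant over $\bar{k}$, so it does not leave the constant world. By the universal property of projective limits the component of $\chi_{X/\bar{k}/k}^E$ at an index $(P,G,p)$ is $(p^E_{(P,G,p)}\times_k\bar{k})\circ\chi_{X/\bar{k}/k}^E=\rho_{(P,G_{\bar{k}},p)}$, where $p^E_{(P,G,p)}\colon\pi^E(X/k,\bar{x})\to G$ is the structural projection, while the inclusion of index categories gives $p^E_{(P,G,p)}=p^G_{(P,G,p)}\circ\pi_G^E$ for every constant object. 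Combining these, for each $(P,G,p)\in I_{\text{co}}(X/k,\bar{x})$ I obtain
$$(p^G_{(P,G,p)}\times_k\bar{k})\circ(\pi_G^E\times_k\bar{k})\circ\chi_{X/\bar{k}/k}^E=(p^E_{(P,G,p)}\times_k\bar{k})\circ\chi_{X/\bar{k}/k}^E=\rho_{(P,G_{\bar{k}},p)}=(p^G_{(P,G,p)}\times_k\bar{k})\circ\Psi.$$
Since the projections $p^G_{(P,G,p)}\times_k\bar{k}$ over all $(P,G,p)\in I_{\text{co}}(X/k,\bar{x})$ are jointly monic on $\pi^G(X/k,\bar{x})\times_k\bar{k}$, this forces $(\pi_G^E\times_k\bar{k})\circ\chi_{X/\bar{k}/k}^E=\Psi$; that is, under the stated identification the composite is the identity, so $\chi_{X/\bar{k}/k}^E$ is a section of $\pi_G^E\times_k\bar{k}$. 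In particular it is a split monomorphism, which reproves the embedding claim independently of \ref{exactness2}. The main obstacle is merely the bookkeeping of the three base-change functors and their projection formulas; the one genuinely nontrivial input is the transparency of constant-group torsors under change of base field, which is exactly what the factorization hypothesis on $X$ provides.
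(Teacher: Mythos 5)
Your proposal is correct and is in substance the same as the paper's proof: the paper also defines $\chi_{X/\bar{k}/k}^E$ by the functor that regards an \'etale-group torsor $(P,G,p)$ as a torsor under the constant group scheme attached to $G\times_k\bar{k}$ (legitimate precisely because $X\to\Spec(k)$ factors through $\Spec(\bar{k})$, so $P\times_kG=P\times_{\bar{k}}(G\times_k\bar{k})$), and it deduces the section property from the observation that this functor composed with the inclusion $I_{\text{co}}(X/k,\bar{x})\subseteq I_{\text{\'et}}(X/k,\bar{x})$ is isomorphic to the identity functor on $I_{\text{co}}(X/k,\bar{x})$ --- exactly your identity $F|_{I_{\text{co}}(X/k,\bar{x})}=F^G$ combined with the equivalence $F^G$. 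Your component-wise check against the jointly monic structural projections of $\pi^G(X/k,\bar{x})\times_k\bar{k}$ merely spells out what the paper compresses into that one line, so the two arguments coincide.
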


\begin{proof}
Let's first redo the construction in \ref{comparison0}. Given  $(P,G,p)\in I_{\text{\'et}}(X/k,\bar{x})$, let $G'$ be the abstract group associated to $G\times_k\bar{k}$. Viewing $G'$ as a constant group scheme over $k$, we get an object $(P,G',p)\in I_{\text{co}}(X/k,\bar{x})$.  In this way we get a functor which makes the following diagram 2-commutative $$\xymatrix{ I_{\text{\'et}}(X/k,\bar{x})\ar[rr]^-F\ar[dr]_{\varphi}&& I_{\text{co}}(X/k,\bar{x})\ar[ld]^{\psi}\\&\text{Grsch}(\bar{k})&}$$  where $\varphi$ is the functor sending $(P,G,p)\in  I_{\text{\'et}}(X/k,\bar{x})$ to $G\times_k\bar{k}$, and $\psi$ is the functor sending $(P,G,p)\in  I_{\text{co}}(X/k,\bar{x})$ to the abstract group $G$ regarded as a group scheme over $\bar{k}$. Since base change is compatible with projective limit, we have $$\varprojlim_{i\in  I_{\text{\'et}}(X/k,\bar{x})}\varphi(i)=\pi^E(X/k,\bar{x})\times_k\bar{k}\hspace{20pt}\text{and}\hspace{20pt}\varprojlim_{i\in  I_{\text{co}}(X/k,\bar{x})}\psi(i)=\pi^G(X/k,\bar{x})\times_k\bar{k}$$ This defines the homomorphism $\chi_{X/\bar{k}/k}^E$ which is then easily seen as a section of $\pi_G^E$, because the (right) composition of $F$ with the inclusion $$i: I_{\text{co}}(X/k,\bar{x})\longrightarrow I_{\text{\'et}}(X/k,\bar{x})$$ is isomorphic to the identity functor on $I_{\text{co}}(X/k,\bar{x})$.
\end{proof}


\begin{rmk} We have seen from \ref{AB} that both $\chi_{X/k}^N$ and $\chi_{X/\bar{k}/k}^E$ are not, in general, isomorphisms.
\end{rmk}

\subsection{The Geometric Base Point}

\begin{prop}\label{basepoint} Let $X$ be any connected reduced scheme over $k$, $\bar{x}_1: \Spec(\bar{l}_1)\to X$ and $\bar{x}_2: \Spec(\bar{l}_2)\to X$ be two  geometric points of $X$. Then there are (non-canonical) isomorphisms between the following $k$-group schemes: \[\pi^{E}({X}/k,\bar{x}_1)\cong \pi^{E}({X}/k,\bar{x}_2)\tag{i}\] \[\pi^{L}({X}/k,\bar{x}_1)\cong \pi^{L}({X}/k,\bar{x}_2)\tag{ii}\]\[\pi^{N}({X}/k,\bar{x}_1)\cong \pi^{N}({X}/k,\bar{x}_2).\tag{iii}\]
\end{prop}
\begin{proof} 

Let's first examine (i). Let $I_{\text{\'et}}(X/k)$ be the category of pairs $(P,G)$, where $P$ is a torsor over $X$ under a finite \'etale $k$-group scheme $G$ and let $\text{Ecov}(X)$ be the category of finite \'etale coverings of $X$. Consider the following functors \[\xymatrix{I_{\text{\'et}}(X/k)\ar[r]^{F}& \text{Ecov}(X)\ar@/^/[r]^-{F_{\bar{x}_1}}\ar@/_/[r]_{F_{\bar{x}_2}}& \text{((Sets))}}\] where $F$ is the forgetful functor (forgetting the group) and $F_{\bar{x}_1}, F_{\bar{x}_2}$ are  the fibre functors induced by ${\bar{x}_1},\bar{x}_2$. The category $I_{\text{\'et}}(X/k,\bar{x}_1)$ is just the opposite category of representable  presheaves over the presheaf $F_{\bar{x}_1}\circ F$ on $I_{\text{\'et}}(X/k)^{\rm o}$, i.e. its objects are  pairs $(A,a)$ where $A\in I_{\text{\'et}}(X/k)$ and $a: A\to F_{\bar{x}_1}\circ F$ is a morphism of presheaves on $I_{\text{\'et}}(X/k)^{\rm o}$. But from \cite[Expos\'e V, Corollaire 5.7, pp. 107]{SGA1}, there is an isomorphism of functors $F_{\bar{x}_1}\cong F_{\bar{x}_2}$, hence an isomorphism $F_{\bar{x}_1}\circ F\cong F_{\bar{x}_2}\circ F$. Therefore we get an equivalence $I_{\text{\'et}}(X/k,\bar{x}_1)\cong I_{\text{\'et}}(X/k,\bar{x}_2)$ which is compatible with the forgetful functors to $I_{\text{\'et}}(X/k)$. This gives the isomorphism (i).

The isomorphism (ii) is clear. The reason is that surjective purely inseparable morphisms are homeomorphisms on the underlying topological spaces.

Finally we consider the sequence  \[\xymatrix{N(X/k)\ar[r]^{q}& I_{\text{\'et}}(X/k)\ar[r]^{F}& \text{Ecov}(X)\ar@/^/[r]^-{F_{\bar{x}_1}}\ar@/_/[r]_{F_{\bar{x}_2}}& \text{((Sets))}}\]
where $N(X/k)$ the category of pairs $(P,G)$ in which $G$ is a finite $k$-group scheme, $P$ is a torsor over $X$ under $G$, and $q$ is the the functor sending any pair $(P,G)$
to its \'etale quotient $(P_{\text{\'et}},G_{\text{\'et}})$. Now replacing $ I_{\text{\'et}}(X/k)$ by $N(X/k)$ and $F$ by $F\circ q$ we can do the same argument as that in the proof of (i) to get the isomorphism (iii). 
\end{proof} 

\begin{rmk}\label{basepoint2} From the proof of (ii) we see that actually in the definition of $\pi^L$ the base point is not necessary, as for any two different base points $\bar{x_1},\bar{x_2}$ of $X$, the isomorphism $\pi^{L}({X}/k,\bar{x}_1)\xrightarrow{\cong} \pi^{L}({X}/k,\bar{x}_2)$ is \textit{canonical}.
\end{rmk}

\subsection{Base Change}
\begin{prop} \label{basechange} Let $X$ be a scheme geometrically connected proper separable
over a field $k$, $k\subseteq l\subseteq l'$ be a sequence of field
extensions, where $l$ and $l'$ are algebraically closed fields. Let
$\bar{x}:\Spec(l')\to X$ be a geometric point. Then the following
natural map
$$\pi_{l}^{l'}:\pi^E(X\times_kl'/k,\bar{x})\longrightarrow\pi^E(X\times_kl/k,\bar{x})$$ is an isomorphism of $k$-group schemes.
\end{prop}
\begin{proof}Let $Y'\to X\times_kl'$ be a $G'$-torsor with a fixed point $\Spec(l')\to Y'$ lying over $\bar{x}$.
By \cite[Expos\'e X, Corollaire 1.7]{SGA1},
$$\pi_1^{\text{\'et}}(X\times_kl',\bar{x})\cong\pi_1^{\text{\'et}}(X\times_kl,\bar{x}).$$
Thus by  \cite[Expos\'e V,Th\'eor\`eme 4.1]{SGA1}, the base change
functor $-\times_ll'$ induces an equivalence of categories between
the categories of finite \'etale coverings ${\rm ECov}(X\times_kl)$ and  ${\rm ECov}(X\times_kl')$. Thus
there is a finite \'etale covering $Y\to X\times_kl$ such that
$Y\times_ll'=Y'.$ Now from the full faithfulness of  $-\times_ll'$ and the
fact that $G\times_kl$ and $G\times_kl'$ are constant group schemes,
the action
$$(Y\times_ll')\times_{l'}(G\times_kl')=Y\times_ll'\times_kG=Y'\times_kG\to Y'=Y\times_ll'$$ descends to
an action $Y\times_kG\to Y$ and makes $Y$ a $G$-torsor. This means that the pull back functor $$F_{l'}^{l}:
N(X\times_kl/k,\bar{x})\to N(X\times_kl'/k,\bar{x})$$ is essentially surjective. But by the fully faithfulness of  $-\times_ll'$ the pull back functor $F_{l'}^{l}$ is also fully faithful.  Hence $F_{l'}^{l}$ is an equivalence, and therefore the canonical morphism $\pi_{l}^{l'}$ is an isomorphism.
\end{proof}
\begin{rmk}\label{basechangeinf} Unfortunately the similar statement for $\pi^L$ is false. This is due to an example by V. Mehta and S. Subramanian. Let $X$ be an integral projective curve over $k=\bar{k}$ of characteristic $p>0$ with at least one cuspidal singularity. Let $x\in X(k)$ be a rational point, and $k\subsetneq l$ be an extension of algebraically closed fields. We have the following commutative diagram $$\xymatrix{\pi^L(X\times_kl/l,x)\ar[rr]\ar[dr]&&\pi^L(X\times_kl/k,x)\times_kl\ar[ld]^-{\pi_k^{l}\times \id}\\&\pi^L(X/k,x)\times_kl&}$$
with canonical morphisms. In \cite[\S 3]{MS}, Mehta and Subramanian constructed a homomorphism $\phi:\pi^L(X\times_kl/l,x)\to \mu_{p,l}$ which does not come from a homomorphism $\pi^L(X/k,x)\to \mu_{p,k}$ by base change. If $\pi_k^l$ was an isomorphism, then $\phi$ does not come from a homomorphism $\pi^L(X\times_kl/k,x)\to \mu_{p,k}$. But this is a contradiction, since any $\mu_{p,l}$-torsor over $X\times_kl$ comes from a $\mu_{p,k}$-torsor over $X\times_kl$.
\end{rmk}

\subsection{The \'Etale Universal Covering}

In this subsection we want to emphasize a big difference between
$\pi^E(X/k,x)$ and $\pi^G(X/k,x)$ (or $\pi_1^{\text{\'et}}(X,x)$) via comparing their universal
coverings. The following statement is well known in the literature. 
\begin{stat} Let $X$ be a connected Noetherian scheme, $x\in X(\Spec(\bar{k}))$ be any geometric point. Then the universal covering $\widetilde{X}_x$ corresponding to $\pi_1^{\text{\'et}}(X,x)$ is connected.
\end{stat}
 The major reason behind this
phenomenon is the following: \begin{fact}\label{conn} If $X$ is a
locally Noetherian connected scheme, and $x\in X(\Spec(\bar{k}))$ is any geometric point, then for any triple $(P,G,p)\in I_{\text{co}}(X,x)$  the
corresponding map $\pi_1^{\text{\'et}}(X,x)\to G$ is surjective if and only if $P$ is
connected.
\end{fact}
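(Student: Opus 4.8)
The plan is to reduce everything to Grothendieck's Galois theory and the standard dictionary between finite \'etale coverings and finite $\pi_1^{\text{\'et}}$-sets. First I would recall from \cite{SGA1}[Expos\'e V] that the fibre functor $F(-)=(-)\times_X\Spec(\bar{k})$ induces an equivalence between ${\rm ECov}(X)$ and the category of finite sets equipped with a continuous $\pi_1^{\text{\'et}}(X,x)$-action, under which a covering is connected precisely when its associated $\pi_1^{\text{\'et}}$-set is a single non-empty orbit. Since $G$ is a finite constant group scheme, $P\to X$ is a finite \'etale covering, so this dictionary applies to it directly.

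Next I would analyse the fibre $F(P)=P_x:=P\times_X\Spec(\bar{k})$. As the fibre of a $G$-torsor over the point $\Spec(\bar{k})$, it is a simply transitive $G$-set, and the chosen point $p$ gives an identification $P_x\xrightarrow{\cong}G$, $g\mapsto p\cdot g$. The monodromy action of $\pi_1^{\text{\'et}}(X,x)$ on $P_x$ commutes with the right $G$-action because $G$ acts by $X$-automorphisms; writing $\sigma(p)=p\cdot h(\sigma)$ defines a homomorphism $h:\pi_1^{\text{\'et}}(X,x)\to G$, and for every $g\in G$ one computes $\sigma(p\cdot g)=\sigma(p)\cdot g=p\cdot h(\sigma)g$. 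Thus under the identification $P_x\cong G$ the element $\sigma$ acts by left translation by $h(\sigma)$, so the $\pi_1^{\text{\'et}}$-orbits in $P_x$ are exactly the cosets $\mathrm{im}(h)\cdot g$, whose number is the index $[G:\mathrm{im}(h)]$.

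Then I would identify $h$ with the homomorphism named in the statement. By \ref{constant2}~(ii) the group $\pi^G(X/k,x)$ is the linearisation of $\pi_1^{\text{\'et}}(X,x)$, and the homomorphism $\pi^G(X/k,x)\to G$ attached to $(P,G,p)$ via the $I_{\text{co}}$-analogue of \ref{imp} is, on $k$-points, exactly the monodromy $h$; this is built into the construction of the universal covering in \ref{unicov}. Granting this, $P_x$ is a single orbit---equivalently, by the correspondence above, $P$ is connected---if and only if $[G:\mathrm{im}(h)]=1$, i.e. $\mathrm{im}(h)=G$, i.e. $h$ is surjective. This proves both implications at once and is precisely the assertion.

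I expect the only point requiring genuine care to be this last identification: that the homomorphism produced abstractly by the Tannakian, universal-covering formalism of \ref{unicov} coincides with the classical monodromy representation of \cite{SGA1}[Expos\'e V]. Everything else is the formal translation through the Galois correspondence together with the elementary coset computation, and in particular the hypothesis that $X$ is connected is used only to guarantee that $\pi_1^{\text{\'et}}(X,x)$ and the equivalence ${\rm ECov}(X)\simeq\pi_1^{\text{\'et}}\text{-FinSet}$ are available.
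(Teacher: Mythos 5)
Your proof is correct, but it takes a genuinely different route from the paper's. The paper never unwinds the monodromy action: it argues entirely inside the category $I_{\text{co}}(X,x)$, using the dictionary of \ref{imp} between factorizations $\pi_1^{\text{\'et}}(X,x)\to H\subseteq G$ and pointed subtorsors $(Q,H,p)\subseteq(P,G,p)$. For ``surjective $\Rightarrow$ connected'' it takes $Q$ to be the connected component of $P$ containing $p$ and $H$ its stabilizer, obtaining a proper subobject and hence a factorization through $H\subsetneq G$, contradicting surjectivity; for the converse, the pointed subtorsor $(Q,H,p)$ attached to the image is finite \'etale over $X$, hence open and closed in $P$, so connectedness of $P$ forces $Q=P$ and $H=G$. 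You instead pass through the SGA1 equivalence between ${\rm ECov}(X)$ and finite continuous $\pi_1^{\text{\'et}}(X,x)$-sets and do the coset computation in the fibre. Your route buys a quantitative refinement the paper's does not state --- the number of connected components of $P$ equals the index $[G:\mathrm{im}(h)]$, not merely the transitive/non-transitive dichotomy --- and it makes the homomorphism completely explicit. The paper's route buys exactly what you isolate as the one delicate point: it never needs to know that the map produced by the universal-covering formalism of \ref{unicov}/\ref{imp} coincides with classical monodromy, since it only uses the formal property that a factorization of that map through a subgroup corresponds to a pointed subtorsor. That compatibility is standard (both $h$ and the abstract map are characterized by the fact that the contracted product of the universal object along them recovers $(P,G,p)$, and for constant $G$ this pins them down uniquely), but it is not literally ``built into'' \ref{unicov} as you suggest; if you want your argument fully self-contained you should either supply that one-line characterization or, alternatively, note that your coset computation identifies $\mathrm{im}(h)$ with the stabilizer of the connected component of $p$, which is precisely the subgroup $H$ the paper's proof manipulates, so the two notions of ``image'' agree without invoking the full identification.
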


But for universal coverings under
$\pi^E$, they are usually highly non-connected. We have seen some examples in \ref{suprising} which are caused by complicated structures of the \'etale group schemes.  Here is another example which is caused by the choice of the point on the torsor.
\begin{ex}\label{euni} Let $X=\Spec(\Q)$, $\bar{x}:\Q\subseteq \bar{
\Q}$. Consider a prime number $p>2$. Then $\mu_p\cong \Spec(\Q)\coprod\Spec(K)$ as a scheme, where
 $K$ is the $p$-th cyclotomic field. Let $(\mu_p,\mu_p,q)$ be the trivial
$\mu_p$-torsor equipped with the point
$q:\Spec(\bar{\Q})\to \Spec(K)$. Obviously $\mu_p$ is not connected, but
the unique map
$$(\phi,h): \ (\widetilde{X_x},\pi^E(X/k,\bar{x}),\tilde{x})\to (\mu_p,\mu_p,q)$$ can
not be trivial on  $h$, for otherwise
$(\phi,h)$ would factor through the trivial triple $(X,\{1\},\bar{x})$, and then $q$
has to be the trivial point $\Spec(\bar{\Q})\to \Spec(\Q)$. But if $h$ is
non-trivial then it has to be surjective. Therefore $(\mu_p,\mu_p,q)$ is saturated but not connected. Since
$h$ is surjective, $\phi$ must be faithfully flat. But $\mu_p$ is not connected so
$\widetilde{X_x}$ can not be either.
\end{ex}

\begin{proof}[Proof of the fact] "$\Rightarrow$" If $P$ was not connected then we can take the connected component $Q\subsetneq P$ containing $p$. Let $H\subsetneq G$ be the stabilizer of $Q$, then $(Q,H,p)\subsetneq (P,G,p)$. Therefore we have a factorization $\pi_1^{\text{\'et}}(X,x)\to H\subsetneq G$ which contradicts to the assumption that $\pi_1^{\text{\'et}}(X,x)\to G$ is surjective. "$\Leftarrow$" Suppose $\pi_1^{\text{\'et}}(X,x)\to G $ factorizes as $\pi_1^{\text{\'et}}(X,x)\to H\subseteq G$. Then we would have an imbedding $(Q,H,p)\subseteq (P,G,p)$. But $Q\subseteq P$ is finite \'etale, so it's both open and closed. Therefore $Q= P$ for $Q\neq \emptyset$. Hence $H=G$.
\end{proof}

\begin{proof}[Proof of the statement] Let $I\subseteq I_{\text{co}}(X,x)$ be the full subcategory consisting of saturated objects. By \ref{saturated}, the category $I$ is cofiltered. Then $\widetilde{X}_x=\varprojlim_{i\in I}P_i$, where $i=(P_i,G_i,p_i)\in I$. Because of the above \textit{Fact}, these $P_i$ are connected. The scheme $\widetilde{X}_x$ is connected if and only if $H^0(\widetilde{X}_x,O_{\widetilde{X}_x})$ has no non-trivial idempotents. 
Since $X$ is quasi-compact and $\varinjlim$ is an exact functor we know that $$H^0(\widetilde{X}_x,O_{\widetilde{X}_x})=\varinjlim_{i\in I}H^0(P_i,O_{P_i}).$$ As each $P_i$ is connected, there is no non-trivial idempotent in $H^0(P_i,O_{P_i})\subseteq H^0(\widetilde{X}_x,O_{\widetilde{X}_x})$, hence there is no non-trivial idempotent in $H^0(\widetilde{X}_x,O_{\widetilde{X}_x})$.
\end{proof}

\section{The First Fundamental  Sequence}

\subsection{The General Case}

\begin{prop}\label{surjectivitylemme}\label{surjectivity2}Let $X$ be a geometrically connected separable scheme  over a field $k$, and $\bar{x}:\Spec(\bar{k})\hookrightarrow X$ be a geometric point. Then the
 natural $k$-group scheme homomorphism $$\pi^{I}(X/k,\bar{x})\to \pi^{I}(k/k,\bar{x})$$ is  surjective for $I=E,G,N,L$.\end{prop}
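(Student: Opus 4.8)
The plan is to reduce the surjectivity to the single assertion that \emph{the pullback along $X\to\Spec(k)$ of an $I$-saturated object over $\Spec(k)$ stays $I$-saturated over $X$}, and then to prove that assertion by a constancy argument exploiting both hypotheses on $X$. The structure morphism $X\to\Spec(k)$ induces the pullback functor $I(k/k,\bar x)\to I(X/k,\bar x)$, $(Q,H,q)\mapsto(Q_X,H,q_X)$ with $Q_X:=X\times_kQ$ and $q_X:=(\bar x,q)$, and this is exactly the functor whose associated homomorphism of Tannakian groups is $\pi^I(X/k,\bar x)\to\pi^I(k/k,\bar x)$. By \ref{saturated} and \ref{projlimit} I may write $\pi^I(k/k,\bar x)=\varprojlim H$, the limit running over $I$-saturated triples $(Q,H,q)$ with surjective transition maps. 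Since the image of a homomorphism of affine $k$-group schemes is a closed subgroup scheme, and a closed subgroup scheme of $\varprojlim H$ surjecting onto each $H$ must be the whole limit, it suffices to show that for every $I$-saturated $(Q,H,q)$ the composite $\pi^I(X/k,\bar x)\to\pi^I(k/k,\bar x)\twoheadrightarrow H$ is surjective. By the naturality of \ref{imp} this composite corresponds to the pullback triple $(Q_X,H,q_X)$, so the task becomes to prove that $(Q_X,H,q_X)$ is $I$-saturated.

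Next I would reformulate $I$-saturation through reductions of structure group. The triple $(Q_X,H,q_X)$ fails to be $I$-saturated exactly when it admits a reduction to a proper subgroup scheme $H'\subsetneq H$ through $q_X$, and such reductions are the same as sections of $Q_X/H'\to X$ passing through the image of $q_X$. Because $H'$ acts on $Q_X=X\times_kQ$ through the second factor only, one has $Q_X/H'=X\times_k(Q/H')$, and a section of $X\times_k(Q/H')\to X$ is precisely a $k$-morphism $s\colon X\to Q/H'$; the condition on the point says $s(\bar x)=\bar q$, the image of $q$ in $(Q/H')(\bar k)$.

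Here is the geometric heart. The quotient $Q/H'$ is a \emph{finite} $k$-scheme, say $Q/H'=\Spec(A)$ with $A$ a finite-dimensional $k$-algebra, so $s$ is a $k$-algebra homomorphism $A\to\Gamma(X,\mathcal{O}_X)$ whose image $B$ is a finite-dimensional $k$-subalgebra. Since $X$ is separable, $\bar X$ is reduced, hence $X$ and $\Gamma(X,\mathcal{O}_X)$ are reduced and $B$ is reduced; since $X$ is connected, $\Gamma(X,\mathcal{O}_X)$ has no nontrivial idempotents, so neither does $B$, whence $B$ is a finite field extension $L$ of $k$. Geometric connectedness forces $k$ to be separably algebraically closed in $\Gamma(X,\mathcal{O}_X)$, ruling out a separable part of $L/k$, while separability (reducedness of $\bar X$) rules out any purely inseparable part; therefore $L=k$. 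Thus $s$ factors through a $k$-rational point of $Q/H'$ lying over $\bar q$, i.e. it yields a reduction of $(Q,H,q)$ to $H'$ over $\Spec(k)$ through $q$. As $(Q,H,q)$ is $I$-saturated this is impossible for $H'\subsetneq H$, so $(Q_X,H,q_X)$ is $I$-saturated and the surjectivity follows. The argument is uniform in $I=E,G,N,L$, since passing to subgroup schemes and quotients preserves the defining property (étale, constant, local, finite) of each class.

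The step I expect to be the main obstacle is the geometric constancy lemma: every $k$-morphism from $X$ to a finite $k$-scheme is constant at a $k$-rational point. This is precisely where the two hypotheses on $X$ enter in a complementary and essential way — geometric connectedness kills separable constants and separability (equivalently, reducedness of $\bar X$) kills inseparable ones — and it is exactly the presence of inseparable constants that would otherwise allow a nontrivial local torsor over $\Spec(k)$ to acquire a section after pullback. For $I=E,G$ this specializes to the connectedness criterion behind the Fact \ref{conn}, namely that $X\times_k\Spec(L)$ is connected for $L/k$ finite separable; the local case is the genuinely new point, and it is the reason \emph{separable} cannot be dropped. A secondary, purely formal point to pin down is the reduction of surjectivity of the limit map to surjectivity onto each saturated finite quotient.
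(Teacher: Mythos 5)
Your proposal is correct and takes essentially the same route as the paper: both reduce the surjectivity to showing that the pullback of a saturated triple over $\Spec(k)$ remains saturated over $X$, interpret a reduction of structure group of the pulled-back torsor as a section $X\to X\times_k(Q/H')$, i.e. a $k$-morphism from $X$ to a finite $k$-scheme, and then descend it by the same constancy argument, namely that geometric connectedness together with separability forces such a morphism to factor through a $k$-rational point (the paper's $k=\kappa(x)$ step, which you expand into the separable/inseparable dichotomy for the image algebra). The remaining differences — your explicit limit-theoretic reduction to saturated quotients and the remark that each class $I=E,G,N,L$ is stable under subgroups — are presentational refinements of steps the paper leaves implicit.
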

\begin{proof}Suppose that we have an object $(l,G,t)\in I(k/k)$
and that we have a morphism $$(\lambda,i): (Q,H,s)\to (l\times_kX,G,t)\in I(X/k),$$where the group homomorphism $i:H\to G$ is a closed imbedding. Then we have a section in the category of $X$-schemes $$X=Q/H\hookrightarrow( l\times_kX)/H=(l/H)\times_kX.$$ As $l/H$ is  finite over $k$, its connected components are single points. Let  $x\in l/H$ be the image  of  $t\in l$ under the projection $l\to l/H$. Since $X$ is  connected, reduced and $\lambda$ sends $s\mapsto t$, the map $$X\hookrightarrow( l/H)\times_kX\xrightarrow{pr_1} l/H$$  factors through  $x:\Spec(\kappa(x))\hookrightarrow l/H$ where $\kappa(x)$ is the residue field of $x$.  Hence $X$ is a scheme over $\kappa(x)$. But $X$ is geometrically connected and geometrically reduced over $k$, so the extension $k\subseteq \kappa(x)$ has to be trivial, i.e. $k=\kappa(x)$. In other words,  $x$ is a $k$-rational point of $l/H$. Now pull back the projection map $l\to l/H$ along $x:\Spec(k)\to l/H$,  we get  a map $(q,H,t)\to(l,G,t)\in I(k/k)$ in which the group homomorphism is the imbedding $i:H\hookrightarrow G$.  In particular if the map $\pi^{I}(k/k,\bar{x})\to G$ corresponding to $(l,G,t)$ is surjective, then the composition $$\pi^{I}(X/k,\bar{x})\to\pi^{I}(k/k,\bar{x})\twoheadrightarrow G$$ has to be surjective too. This means precisely that $\pi^{I}(X/k,\bar{x})\to \pi^{I}(k/k,\bar{x})$ is  surjective.
\end{proof}

\begin{prop} \label{exactness} Let $X$ be a geometrically connected separable scheme  over a field $k$, and $\bar{x}:\Spec(\bar{k})\hookrightarrow X$ be a geometric point. Then the
 natural sequence of $k$-group schemes \begin{equation}\label{seq1}1\to\pi^{I}(\bar{X}/k,\bar{x})\to \pi^{I}(X/k,\bar{x})\to  \pi^{I}(k/k,\bar{x})\to 1\tag{1}\end{equation} is a complex, and it  is  exact in the middle  if and only if the following two conditions are satisfied. \begin{enumerate}[(i)]\item For any $I$-saturated object $(P,G,p)\in I(X/k,\bar{x})$, the image of the composition of the natural homomorphisms $$\pi^{I}(\bar{X}/k,\bar{x})\to \pi^{I}(X/k,\bar{x})\twoheadrightarrow G$$ is a normal subgroup of $G$.\item Whenever there is an object $(P,G,p)\in I(X/k,\bar{x})$ whose pull-back along $\bar{X}\to X$ is trivial then there is an object $(Q,H,q)\in I(k/k,\bar{x})$ whose pull-back along $X\to \Spec(k)$ is isomorphic  to $(P,G,p)$.
\end{enumerate}
\end{prop}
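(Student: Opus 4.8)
The plan is to reduce exactness to a statement about the finite quotients of the three groups and to match it, via the torsor--homomorphism dictionary of \ref{imp}, with the two conditions. Throughout write $\Gamma:=\pi^I(X/k,\bar x)$, $\bar\Gamma:=\pi^I(\bar X/k,\bar x)$, $\Pi:=\pi^I(k/k,\bar x)$, and let $f:\bar\Gamma\to\Gamma$ and $p:\Gamma\to\Pi$ be the homomorphisms induced by the pull-back functors $N(X/k,\bar x)\to N(\bar X/k,\bar x)$ and $\mathrm{pr}^*:N(k/k,\bar x)\to N(X/k,\bar x)$. First I would verify the complex property: for $(Q,H,q)\in I(k/k,\bar x)$ the chosen point $q$ gives a section of $Q\times_k\bar k\to\Spec\bar k$, so this pointed $H$-torsor is trivial; pulling back further along $\bar X\to\Spec\bar k$ shows that $Q\times_k\bar X$ is the trivial pointed $H$-torsor over $\bar X$. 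By \ref{imp} this says the composite $\bar\Gamma\to\Gamma\to\Pi\to H$ is trivial for every such $H$, whence $p\circ f=1$. Setting $N:=\ker(p)$ (a normal closed subgroup scheme, $p$ being surjective by \ref{surjectivity2}) and $M:=\im(f)$ (the scheme-theoretic image), this gives $M\subseteq N$; exactness in the middle is the equality $M=N$.

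The key technical step is a reduction to finite quotients: a closed subgroup scheme $L\subseteq\Gamma=\varprojlim_i G_i$ is recovered from its images $q_{G_i}(L)$ under the surjective projections to the saturated finite quotients (cofinal by \ref{saturated}), since $\mathcal O(\Gamma)=\varinjlim_i\mathcal O(G_i)$ forces the defining Hopf ideal to satisfy $I_L=\bigcup_i\bigl(I_L\cap\mathcal O(G_i)\bigr)$. Granting this, $M=N$ holds if and only if $q_G(M)=q_G(N)$ for every saturated $(P,G,p)\in I(X/k,\bar x)$. For such an object put $M_G:=q_G(M)$ and $N_G:=q_G(N)$; then $M_G\subseteq N_G$ and $N_G\trianglelefteq G$. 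Two identifications, both instances of \ref{imp} together with functoriality of pull-back, control everything: $M_G$ is precisely the image of $\pi^I(\bar X/k,\bar x)\to G$ figuring in (i), so $M_G=\{1\}$ iff the pull-back of $(P,G,p)$ to $\bar X$ is trivial; and $N_G=\{1\}$ iff $q_G:\Gamma\to G$ factors through $\Pi=\Gamma/N$, i.e. iff $(P,G,p)$ is the pull-back of an object of $I(k/k,\bar x)$.

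With this dictionary the equivalence is a diagram chase. For the ``if'' direction I would assume (i), (ii) and fix a saturated $(P,G,p)$: by (i) the subgroup $M_G$ is normal in $G$, so the quotient $G\twoheadrightarrow G/M_G$ produces an object $(P',G/M_G,p')\in I(X/k,\bar x)$ (the classes $N,E,L$ being stable under quotient group schemes) whose pull-back to $\bar X$ has classifying image $M_G/M_G=\{1\}$ and is therefore trivial; applying (ii) to $(P',G/M_G,p')$ and the second identification yields $N\subseteq\ker(\Gamma\to G/M_G)=q_G^{-1}(M_G)$, i.e. $N_G\subseteq M_G$, hence $M_G=N_G$. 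As this holds for every saturated object, the reduction gives $M=N$. For the ``only if'' direction, assume $M=N$: then $M_G=N_G=q_G(N)$ is the image of a normal subgroup under a surjection, hence normal, which is (i); and for any $(P,G,p)$ with trivial pull-back to $\bar X$ its classifying map $\phi:\Gamma\to G$ satisfies $M\subseteq\ker\phi$, so $N\subseteq\ker\phi$ and $\phi$ factors through $\Pi$, giving the base object required by (ii).

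The main obstacle I anticipate is the middle paragraph rather than the final chase. One must carry out the reduction to finite quotients in the setting of a pro-finite \emph{group scheme} (taking scheme-theoretic images of closed subgroup schemes, not merely images of profinite groups) and, more delicately, verify that the two purely group-theoretic vanishing conditions $M_G=\{1\}$ and $N_G=\{1\}$ correspond under \ref{imp} exactly to ``trivial pull-back to $\bar X$'' and ``descends to the base''; keeping the points on the torsors matched throughout these identifications, and checking that the constructed quotient object still lies in the prescribed subcategory $I$, is where the care is needed.
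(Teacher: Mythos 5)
Your proof is correct and takes essentially the same route as the paper's: the complex property via triviality of the pull-back functor $I(k/k,\bar{x})\to I(\bar{X}/k,\bar{x})$, the dictionary of \ref{imp} translating ``trivial pull-back to $\bar{X}$'' and ``descends to $\Spec(k)$'' into vanishing of $M_G$ and $N_G$, and, for the ``if'' direction, quotienting a saturated $(P,G,p)$ by the normal image $M_G$ and applying condition (ii) to get $N_G\subseteq M_G$, exactly as the paper does with its $H$ and $N$. The only difference is that you make explicit the reduction lemma --- a closed subgroup scheme of $\varprojlim_i G_i$ is determined by its images in the saturated finite quotients, via $\mathcal{O}(\pi^I)=\varinjlim_i\mathcal{O}(G_i)$ --- which the paper leaves implicit in its closing sentence ``since this is valid for all $I$-saturated objects, we can conclude the middle exactness.''
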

\begin{proof} For the first statement it is enough to see that the pull-back functor $$\mathfrak{C}(k/k,\bar{x},I)\to\mathfrak{C}(\bar{X}/k,\bar{x},I)$$ sends any object in $\mathfrak{C}(k/k,\bar{x},I)$ to a trivial object in $\mathfrak{C}(\bar{X}/k,\bar{x},I)$. But this is indeed the case, for the pull-back functor $I(k/k,\bar{x})\to I(\bar{X}/k,\bar{x})$ is trivial.

Now for the second statement. "$\Rightarrow$" (i) is clear, because a normal subgroup is still normal in any quotient. (ii) follows directly from \ref{imp}. Indeed, given $(P,G,p)\in I(X/k,\bar{x})$, there is a unique morphism $\phi:\pi^I(X/k,\bar{x})\to G$ corresponding to $(P,G,p)$. The pull-back of $(P,G,p)$ is trivial means that the composition $$\pi^I(\bar{X}/k,\bar{x})\to\pi^I(X/k,\bar{x})\xrightarrow{\phi} G$$ is trivial. By the exactness there is a unique map $\varphi:\pi^I({k}/k,\bar{x})\to G$ making the  diagram $$\xymatrix{\pi^I({X}/k,\bar{x})\ar[rr]\ar[dr]_-{\phi}&&\pi^I({k}/k,\bar{x})\ar@{.>}[ld]^{\varphi}\\&G&}$$ commutative. Therefore, $\varphi$ defines an object in $I(k/k,\bar{x})$ whose pull-back is isomorphic to $(P,G,p)$.

"$\Leftarrow$" Let $(P,G,p)\in I(X/k,\bar{x})$ be an $I$-saturated object. By \ref{imp}, it corresponds to a $k$-homomorphism $\phi:\pi^I({X}/k,\bar{x})\twoheadrightarrow G$. Let $H$ be the image of the composition $$\pi^I(\bar{X}/k,\bar{x})\to\pi^I(X/k,\bar{x})\xrightarrow{\phi} G.$$ By (i), $H\subseteq G$ is a normal subgroup. Thus we get an object $(P/H,G/H,p)\in I(X/k,\bar{x})$. Since the composition $$\pi^I(\bar{X}/k,\bar{x})\to\pi^I(X/k,\bar{x})\xrightarrow{} G\to G/H$$ is trivial by definition, the pull-back of $(P/H,G/H,p)$ to $\bar{X}$ is a trivial object. By (ii), $(P/H,G/H,p)$ descends to an object in $I(k/k,\bar{x})$, or equivalently, there is a commutative diagram $$\xymatrix{\pi^I(\bar{X}/k,\bar{x})\ar[r]\ar[d]&\pi^I({X}/k,\bar{x})\ar@{.>}[d]\\G\ar[r]&G/H}.$$
Let $N$ be the image of the kernel of $\pi^I({X}/k,\bar{x})\to \pi^I(k/k,\bar{x})$  under the map $${\phi}:\pi^I({X}/k,\bar{x})\twoheadrightarrow G.$$ The above diagram implies that $N\subseteq H$ and the first statement of this proposition implies that $H\subseteq N$. Therefore $H=N$. But since this is valid for all $I$-saturated objects, we can conclude the middle exactness.
\end{proof}

\begin{rmk} The sequence is in general not exact on the left. See \ref{infconex} for an example when $k$ is perfect $X=\A_k^1$ and $I=L$. The example does not work for $\pi^E$. However, one can not use the injectivity for $\pi^G$ (or $\pi_1^{\text{\'et}}$) to conclude the injectivity for $\pi^E$ either. The injectivity for $\pi^G$ (or $\pi_1^{\text{\'et}}$) was deduced from the theory of \textit{Galois closure} for Galois coverings \cite[Proposition 5.3.9, pp. 169]{Sz}. But we can not find an analogue for $\pi^E$.
\end{rmk}

\begin{cor} \label{AP}  If either $X=\A_{{k}}^n$ with $k$ is a field of characteristic 0 \textit{or} $X$ is a complete rational variety over an arbitrary field $k$, then the  canonical map $$\pi^N({X}/k,\bar{x})\to\pi^N(k/k,\bar{x})$$ is an isomorphism.
\end{cor}
\begin{proof} By \ref{projectivelemma} and \ref{affinelemma}, $\pi^N(\bar{X}/k,\bar{x})=\{1\}$. Then the corollary follows from \ref{exactness} and \ref{descentprop}.
\end{proof}

\begin{ex}\label{faildescent} In this part we would like to give an example to show that the condition (ii) of \ref{exactness} is not always satisfied.

Let's just take $k=\F_p(s,t)$ (the function field in two variables over $\F_p$),  $X=\A_k^1\setminus\{a\}$, and $$P=\Spec(A[T]/(T^p-(s+tx^p)))$$ be the $\mu_p$-torsor over $X$ in a natural way, where $A:=O_X(X)$ and $a\in \A_k^1$ is the closed point determined by the polynomial $s+tx^p\in k[x]$. Since $P$ is a  local torsor the base point plays no role. For this reason  we are going to omit the base point in the following discussion. Clearly, the equation $$T^p-(s+tx^p)=0$$ has no solution in $A$, thus $P$ is a non-trivial $\mu_p$-torsor. Furthermore, $P\times_k\bar{k}$ is a tivial torsor over $\bar{X}$, the section being given by the solution of the above equation in $\bar{k}[x]$. But $P\to X$ can not descent to a $\mu_p$-torsor over $\Spec(k)$. In fact, the two $\mu_p$-torsors
$$P_0=\Spec(k[T]/(T^p-s))\hspace{20pt}\text{and}\hspace{20pt}P_1=\Spec(k[T]/(T^p-s-t))$$
which are fibres of $P\to X$ at $x=0$ and $x=1$ respectively, can not be isomorphic. Suppose there was an isomorphism of torsors $$f:k[T]/(T^p-s)\longrightarrow  k[T]/(T^p-s-t)$$ sending $T\mapsto f(T)$, where $f(T)\in k[T]$ is a polynomial of degree less than $p$.  Let $\mu_p=k[Y]/(Y^p-1)$, then $\Aut_{k-\text{grp.sch}}(\mu_p)=(\Z/p\Z)^*$, where $m\in(\Z/p\Z)^*$ stands for $Y\mapsto Y^m$. Thus we should have a commutative diagram $$\xymatrix{
k[T]/(T^p-s)\ar[r]^f\ar[d]_{\rho_0}&k[T]/(T^p-s-t)\ar[d]^{\rho_1}\\k[T]/(T^p-s)\otimes_kk[Y]/(Y^p-1)\ar[r]^-{f\otimes m}&k[T]/(T^p-s-t)\otimes_kk[Y]/(Y^p-1)}, $$where $\rho_0$ and $\rho_1$ are defined by the action of $\mu_p$ on $P_0$ and $P_1$ respectively. Tracing the image of $T$ in the above diagram,  we get
$f(T)\otimes Y^m=f(T\otimes Y)$. This implies that $f(T)$ is a polynomial of the form $\alpha T^m$ with $\alpha\in k$. Then we should have $$f:\ \ \ T^p-s\longmapsto f(T)^p-s=\alpha^pT^{mp} -s=0\in k[T]/(T^p-s-t).$$
But $T^p=s+t\in k[T]/(T^p-s-t)$. Hence we should have $\alpha^p(s+t)^m-s=0\in k\subset k[T]/(T^p-s-t)$. However, this equation can not happen in $k$ because $\F_p[s,t]$ is a UFD.
\end{ex}

However, under some conditions (ii) actually holds.

\begin{prop}\label{descentprop} If  in \ref{exactness}, one of the following conditions hold,
\begin{compactitem}
\item the field $k$ is perfect and $X$ is in addition quasi-compact;
\item the scheme $X$ is proper;
\item the group $G$ is \'etale,
\end{compactitem}
then condition {\rm (ii)} holds for $N(X/k,\bar{x})$.
\end{prop}

\begin{proof}  The last case will be proved in \ref{descentlem}.  Let's show the first two. Let  $(P,G,p)\in N(X/k,\bar{x})$ be an object whose pull-back to $N(\bar{X}/k,\bar{x})$ is trivial, i.e. there is a morphism
$$\lambda:(\bar{X},\{1\},\bar{x})\to (\bar{P},G,p)\in N(\bar{X}/k,\bar{x}).$$

\textit{First assume that $k$ is perfect and $X$ is  quasi-compact.}  
Let $(P_{\text{\'et}},G_{\text{\'et}},p)\in I_{\text{\'et}}(X/k,\bar{x})$ be the \'etale quotient of $(P,G,p)$. Then $(\bar{P}_{\text{\'et}},G_{\text{\'et}},p)$ is also trivial. Thus by \ref{descentlem} there is a triple $(Q,H,q)\in I_{\text{\'et}}(k/k,\bar{x})\subseteq N(k/k,\bar{x})$ such that the pull-back of $(Q,H,q)$ to $X$ is isomorphic to $(P_{\text{\'et}},G_{\text{\'et}},p)$. Let $n $ be the order of the $k$-group scheme $G_{\text{\'et}}$. Then $\bar{P}_{\text{\'et}}$ can be written as $n$-copies of $\bar{X}$: $$\bar{P}_{\text{\'et}}=\coprod_{i=1,\cdots,n}\bar{X}_i$$ where $\bar{X}_i=\bar{X}$. The quotient  $\pi: P\to P_{\text{\'et}}$ makes $P$ as $G^0$-torsor over $P_{\text{\'et}}$, and we have a decomposition $$\bar{P}=\coprod_{i=1,\cdots,n}\bar{P}_i$$ where $\bar{P}_i$ is just  $(\pi\times_k\bar{k})^{-1}(\bar{X}_i)$. Suppose $p\in \bar{P}_1(\bar{k})$. Then the map $\lambda$ makes $\bar{P}_1$  a trivial $G^0$-torsor over $\bar{X}_1$. Since $G^0$ is local and $\bar{X}_1$ is reduced,  the closed imbedding $(\bar{P}_1)_{\text{red}}\hookrightarrow \bar{P}_1$ composing with the projection $\bar{P}_1\to\bar{X}_1$ has to be an isomorphism. As $G_{\text{\'et}}(\bar{k})$ acts transitively on the components $\bar{X}_i$,  for any $1\leq i\leq n$ there is an element $g\in G_{\text{\'et}}(\bar{k})=G(\bar{k})$ making the diagram $$\xymatrix{\bar{P}_0\ar[r]^g\ar[d]&\bar{P}_i\ar[d]\\ \bar{X}_0\ar[r]^g&\bar{X}_i}$$ commutative. Hence the closed imbedding $(\bar{P}_i)_{\text{red}}\hookrightarrow \bar{P}_i$ composing with the projection $\bar{P}_i\to\bar{X}_i$ is also an isomorphism for each $i$. Therefore the composition $\bar{P}_{\text{red}}\hookrightarrow \bar{P}\to \bar{P}_{\text{\'et}}$ is an isomorphism. But as $k$ is perfect, $\bar{P}_{\text{red}}=P_{\text{red}}\times_k\bar{k}$. Thus the composition $P_{\text{red}}\hookrightarrow P\to P_{\text{\'et}}$ has to be an isomorphism too. This defines a section $s:P_{\text{\'et}}\hookrightarrow P$ for the projection $\pi:P\to P_{\text{\'et}}$.  The universality of the reduced closed subscheme structure $P_{\text{red}}\subseteq P$ tells us that there is a unique arrow $P_{\text{\'et}}\times_k G_{\text{\'et}}\dashrightarrow P_{\text{\'et}}$ making the following diagram $$\xymatrix{P_{\text{\'et}}\times_kG_{\text{\'et}}\ar@{.>}[r]\ar@{>->}[d]^{s\times i}\ar@{=}@/_2pc/[dd]&P_{\text{\'et}}\ar@{>->}[d]^s\ar@{=}@/^2pc/[dd]\\P\times_kG\ar[r]^-{\rho}\ar@{>>}[d]^{\pi\times o}& P\ar@{>>}[d]^-{\pi}\\ P_{{\text{\'et}}}\times_kG_{\text{\'et}}\ar[r]^-{\rho_{\text{\'et}}}& P_{\text{\'et}}}$$ commutative, where $i:G_{\text{\'et}}\subseteq G$ is the inclusion of the reduced subscheme structure of $G$, $\rho_{\text{\'et}}$ is  action of $G_{\text{\'et}}$ on $P_{\text{\'et}}$ induced by $\rho$, $o:G\twoheadrightarrow G_{\text{\'et}}$ is the \'etale quotient map.  Therefore we obtain a morphism $$(P_{\text{\'et}},G_{\text{\'et}}, p)\to (P,G,p)\in N(X/k,\bar{x}).$$In view of the isomorphism  $(P_{\text{\'et}},G_{\text{\'et}}, p)\cong (Q\times_kX,H,q)$, we can equip the  $k$-scheme $G$ with a left action from $H$ via $H\cong G_{{\text{\'et}}}\xrightarrow{i}G$, then the contracted product $Q\times^HG$  provides a $k$-form for the $G$-torsor $P$ over $X$. This finishes the proof the first case.

\textit{Now suppose that $X$ is proper.}  Let $f: X\to\Spec(k)$ be the structure morphism, and $\mathcal{A}$ be the push-forward of $\mathcal{O}_P$ to $X$ along $P\to X$. Then $\mathcal{A}$ is a locally free coherent $\mathcal{O}_X$-algebra equipped with a $G$-action map $$\rho: \mathcal{A}\longrightarrow \mathcal{A}\otimes_kk[G]$$ which makes the induced map $\mathcal{A}\otimes_{\mathcal{O}_X}\mathcal{A}\xrightarrow{\id\otimes \rho} \mathcal{A}\otimes_kk[G]$ into an isomorphism. Since $\bar{P}$ is a trivial $G$-torsor over $\bar{X}$, 
$\bar{\mathcal{A}}:=\mathcal{A}\otimes_k\bar{k}$ is a free $\mathcal{O}_{\bar{X}}$-module. But $X$ is proper separable and geometrically connected over $k$, so the adjunction map  $\bar{f}^*\bar{f}_*\bar{\mathcal{A}}\to\bar{\mathcal{A}}$ is an isomorphism. By FPQC-descent, we have that $f^*f_*\mathcal{A}\to\mathcal{A}$ is an isomorphism.  Let $A:=f_*\mathcal{A}=\Gamma(X,\mathcal{O}_X)$. Then the action $f_*\rho: A\to A\otimes_kk[G]$ makes $\Spec(A)$ into a $G$-torsor whose pull-back to $X$ is precisely $P=\Spec(\mathcal{A})$.
\end{proof}


 \subsection{The \'Etale Case}

\begin{prop}\label{topthm} Let $X$ be a  Noetherian scheme, which is  geometrically connected
over a  field $k$, and let $\bar{x}:\Spec(\bar{k})\hookrightarrow X$,  $(P,G,p)\in I_{\text{\'et}}(X/k,\bar{x})$ be a saturated object. Let ${N}$ be the image of the following composition $$\pi^{E}(\bar{X}/k,\bar{x})\to\pi^E(X/k,\bar{x})\twoheadrightarrow G.$$ Then we get an imbedding $(\bar{P}',N,p)\subseteq (\bar{P},G,p)\in I_{\text{\'et}}(\bar{X}/k,\bar{x}).$ If one of the following conditions is satisfied, then $N\subseteq G$ is a normal subgroup scheme.
\begin{enumerate} [{\rm(i)}]\item $P$, as a scheme, is connected.
\item $\bar{P}'$, as a scheme, is connected.
\end{enumerate}
\end{prop}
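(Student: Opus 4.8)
The plan is to reduce everything to a statement about finite groups equipped with a Galois action, and then settle it by a monodromy computation. The embedding $(\bar P',N,p)\subseteq(\bar P,G,p)$ costs nothing: by construction $N$ is the image of the classifying homomorphism of the pulled-back pointed torsor $(\bar P,G,p)$, so \ref{saturated} and \ref{imp} produce the $I_{\text{\'et}}$-saturated triple $(\bar P',N,p)$ together with its canonical monomorphism into $(\bar P,G,p)$. For normality, since $N\subseteq G$ is normal iff $\bar N:=N\times_k\bar k\subseteq\bar G$ is normal (Notation (viii)), I would pass to $\bar k$ and work with the constant group $\Gamma:=G(\bar k)$ carrying the $\Gal(\bar k/k)$-action coming from the \'etale group scheme $G$ (Galois descent, Notation (ix)); the goal becomes $\bar N\triangleleft\Gamma$.

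Next I would set up a monodromy dictionary via the \'etale fundamental exact sequence $1\to\bar\Pi\to\Pi\to\Gal(\bar k/k)\to 1$, with $\Pi:=\pi_1^{\text{\'et}}(X,\bar x)$ and $\bar\Pi:=\pi_1^{\text{\'et}}(\bar X,\bar x)$. Using the base point $p$, the pointed $G$-torsor $P$ is encoded by a \emph{total monodromy} homomorphism $C=(c,q)\colon\Pi\to\Gamma\rtimes\Gal(\bar k/k)$ lifting the projection $q\colon\Pi\to\Gal(\bar k/k)$, where $c$ is a $1$-cocycle. I would record four facts: (a) the \emph{geometric monodromy} $M:=c(\bar\Pi)$ is normal in $C(\Pi)$, because $\bar\Pi\triangleleft\Pi$; (b) $\bar N$ is the smallest $\Gal$-stable subgroup of $\Gamma$ containing $M$ (the minimal group scheme to which $\bar P$ reduces over $\bar X$), so that $\bar P'$ is connected iff $\bar N=M$ iff $M$ is already $\Gal$-stable; (c) $P$ is connected iff $\Pi$ acts transitively on the fibre $\Gamma$, i.e. iff $c(\Pi)=\Gamma$; and (d) saturation of $(P,G,p)$ means the smallest $\Gal$-stable subgroup of $\Gamma$ containing $c(\Pi)$ is all of $\Gamma$. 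The one computation driving both cases is that conjugation by $C(\sigma)=(c(\sigma),s)$ in $\Gamma\rtimes\Gal(\bar k/k)$ preserves the normal subgroup $M$ and sends $m\mapsto c(\sigma)\,{}^{s}m\,c(\sigma)^{-1}$; hence, writing $s=q(\sigma)$, one obtains the identity ${}^{s}M=c(\sigma)^{-1}Mc(\sigma)$, i.e. each Galois conjugate of $M$ is the $\Gamma$-conjugate of $M$ by a cocycle value.

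Under hypothesis (i), fact (c) gives $c(\Pi)=\Gamma$. Then every $\gamma\in\Gamma$ equals $c(\sigma)$ for some $\sigma$, and the identity ${}^{s}M=c(\sigma)^{-1}Mc(\sigma)$ yields $\gamma^{-1}M\gamma={}^{q(\sigma)}M\subseteq\bar N$; applying this to $\gamma$ and to $\gamma^{-1}$ shows the normal closure of $M$ in $\Gamma$ is contained in $\bar N$, while $\bar N=\langle{}^{s}M\rangle$ is visibly contained in that normal closure. Thus $\bar N$ equals the normal closure of $M$, so $\bar N\triangleleft\Gamma$. Under hypothesis (ii), fact (b) gives $\bar N=M$, so $M$ is $\Gal$-stable and the identity becomes $c(\sigma)^{-1}Mc(\sigma)={}^{q(\sigma)}M=M$; hence every cocycle value lies in the normalizer $\Lambda$ of $M$ in $\Gamma$. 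Since $M$ is $\Gal$-stable so is $\Lambda$, and $\Lambda$ is a $\Gal$-stable subgroup containing $c(\Pi)$; by the saturation fact (d) the $\Gal$-stable closure of $c(\Pi)$ is $\Gamma$, forcing $\Lambda=\Gamma$, i.e. $M=\bar N\triangleleft\Gamma$.

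The routine parts are the embedding and the four dictionary entries; the main obstacle is justifying the dictionary rigorously for a \emph{non-constant} \'etale group scheme $G$ --- in particular describing pointed $G$-torsors over $X$ by the twisted homomorphism $C$ into $\Gamma\rtimes\Gal(\bar k/k)$ and translating ``image of $\pi^E$'', ``saturated'' and ``connected'' into the group-theoretic statements (a)--(d) --- which I would carry out by Galois descent (Notations (viii), (ix)) together with the classical correspondence between finite \'etale covers and $\Pi$-sets and the connectedness criterion \ref{conn}. Once the dictionary is in place both implications are short; the point worth flagging is that hypothesis (ii) genuinely uses saturation, since without it $M$ can be $\Gal$-stable and yet fail to be normal in $\Gamma$.
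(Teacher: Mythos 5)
Your argument is correct, and it reaches the same finite-group computation as the paper, but through genuinely different machinery. The paper never invokes $\pi_1^{\text{\'et}}(X,\bar{x})$ or the homotopy exact sequence: it first reduces to $k$ perfect (via \cite{SGA1}, Exp.\ IX, Th.\ 4.10) and to a finite Galois extension splitting $G$, and then works directly with $\bar{k}$-points of the torsor, taking $\bar{H}$ to be the stabilizer of the connected component $\bar{P}_0$ through $p$ (this is your geometric monodromy $M$), the translation elements $t_\sigma$ with $\sigma(p)=p\,t_\sigma$ (your cocycle values $c(\sigma)$, the relation $t_{\sigma\tau}=t_\sigma\circ\sigma(t_\tau)$ being exactly the cocycle condition), and the subgroup $\bar{M}$ they generate; saturation is exploited by descending the auxiliary sub-torsor $\bar{P}''$ (the $\bar{M}\bar{N}$-orbit of $\bar{P}_0$) to $X$ to conclude $\bar{M}\bar{N}=\bar{G}$, and the key identity $t_\sigma^{-1}\bar{H}t_\sigma=\sigma(\bar{H})$ is checked pointwise --- this is precisely your ${}^{s}M=c(\sigma)^{-1}Mc(\sigma)$, which you instead obtain structurally from $\bar{\Pi}\triangleleft\Pi$ inside $\Gamma\rtimes\Gal(\bar{k}/k)$. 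Your packaging buys three things: it avoids constructing and descending $\bar{P}''$ altogether; it isolates where saturation enters (only in case (ii), via your fact (d) --- in case (i) you show $c(\Pi)=\Gamma$ forces $\bar{N}$ to equal the normal closure of $M$, exposing that connectedness of $P$ makes saturation automatic, whereas the paper routes both cases through $\bar{M}\bar{N}=\bar{G}$); and it makes the normality mechanism conceptual rather than computational. The paper's route buys self-containedness: it needs only Galois descent for torsors, not the exact sequence $1\to\bar{\Pi}\to\Pi\to\Gal(\bar{k}/k)\to1$ of \cite{SGA1}, Exp.\ IX, Th.\ 6.1. Two points to make explicit in a write-up: over an imperfect $k$ your dictionary really lives over the separable closure $k_s$ rather than $\bar{k}$, so the radicial invariance of \cite{SGA1}, Exp.\ IX, Th.\ 4.10 --- the very reduction that opens the paper's proof --- is still implicitly needed to pass between $\bar{X}$ and $X\times_k k_s$; and your facts (b) and (d), which correspond to the paper's unproved assertions that $\bar{N}$ is the smallest subgroup containing $\bigcup_\sigma\sigma(\bar{H})$ and that sub-triples of $(P,G,p)$ correspond to Galois-stable subgroups of $\Gamma$ containing $c(\Pi)$, do require the verification via \ref{imp} and \ref{saturated} that you sketch (with \ref{conn} giving the connectedness translations); all of this is routine, so I see no gap.
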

\begin{proof} By \cite[\'Expos\'e IX, Th\'eor\`eme 4.10]{SGA1} we may assume that $k$ is a perfect field. There is a finite Galois subextenison $k\subseteq K$ of $k\subseteq \bar{k}$ such that $G_K$ is constant over $K$ and the number of connected components of $P_K$ is the same as that of $\bar{P}$. In this case the image of $\pi^{E}(\bar{X}/k,\bar{x})$ and $\pi^{E}({X}_K/k,\bar{x})$ are the same in $G$.
Thus replacing $\bar{k}$ by $K$,  we may assume that $k\subseteq \bar{k}$ is a finite Galois extension.

Suppose that we have an $I_{\text{\'et}}$-saturated object $(P,G,p)\in I_{\text{\'et}}(X/k,\bar{x})$. Let $\bar{G}:=G\times_k\bar{k}$, $\bar{P}:=P\times_k\bar{k}$, $\bar{P}_0$ be the
connected component of $\bar{P}$ containing $p$. Let $\bar{H}\subseteq\bar{G}$ be the subgroup which fixes $\bar{P}_0$, i.e.$$\bar{H}:=\{\ g\in\bar{G}\ |\ \bar{P}_0\ g=\bar{P}_0\ \}.$$
Then $\bar{N}\subseteq\bar{G}$ is the smallest subgroup which contains the subset
$$\bigcup_{\sigma\in \Gal(\bar{k}/k)}\sigma(\bar{H})\hspace{15pt}\subseteq\  \bar{G}.$$

Now let $T$ be the subset of $\bar{G}$ whose elements are those $t_{\sigma}$
which send $p\mapsto\sigma(p)$ for some $\sigma\in \Gal(\bar{k}/k)$.
Let $\bar{M}$ be the smallest subgroup of $\bar{G}$ containing $T$. Since
for any $\sigma\in\Gal(\bar{k}/k)$ and $t_{\tau}\in T$ sending $p\mapsto
\tau(p)$ we have $ t_{\sigma}\circ\sigma(t_{\tau})=t_{\sigma\tau}$. So $\sigma(t_{\tau})=t_{\sigma}^{-1}\circ t_{\sigma\tau}\in \bar{M}$, then it follows that
$\sigma(\bar{M})\subseteq \bar{M}$. Let $\bar{M}\bar{N}\subseteq \bar{G}$ be the smallest subgroup of $\bar{G}$ containing $\bar{M}$ and $\bar{N}$.
 Let $$\bar{P}''\ :=\ \bigcup_{g\in\bar{M}\bar{N}} (\bar{P}_0)g\hspace{15pt}\subseteq \ \bar{P},$$  i.e. the $\bar{M}\bar{N}$-obits of $\bar{P}_0$ in $\bar{P}$. Then $\bar{P}''$ is a torsor
under $\bar{M}\bar{N}$. Since both $\bar{P}''$ and $\bar{M}\bar{N}$ are stable under the induced Galois
action, they both descend to $k$, i.e. there exists a $k$-form $MN\subseteq G$ of $\bar{M}\bar{N}$ such that $\bar{P}''$ descends to an $MN$-torsor $P''\subseteq P$ over $X$. Then there is a homomorphism
$$\pi^E(X/k,\bar{x})\to MN\subseteq G.$$ But $\pi^E(X/k,\bar{x})\to G$ is already surjective by the assumption, this immediately implies that $MN=G$ or equivalently $\bar{M}\bar{N}=\bar{G}$.

Next we show that $\bar{N}$ is a normal subgroup of $\bar{G}$. From
the above discussion it is enough to check
$m^{-1}\bar{H}m\subseteq\bar{N}$ for $\forall m\in \bar{M}$. If (i) is satisfied, then $\Gal(\bar{k}/k)$ acts transitively on the connected components of $\bar{P}$, so any element $g\in \bar{G}$ can be written as $h\circ t_{\sigma}$, where $h\in\bar{H}$, $t_{\sigma}\in T$. If (ii) is satisfied, then $\bar{H}=\bar{N}$. In either case it is
already enough to check $t_{\sigma}^{-1}\bar{H}t_{\sigma}\subseteq\bar{N}$ for
 $\forall t_{\sigma}\in T$. From the very definition of $t_{\sigma}$ we have $$\sigma(p) t_{\sigma}^{-1}ht_{\sigma}= (ph)t_{\sigma} =\sigma(p)h'$$ where $h'$ is contained in the stabilizer of $\sigma(P_0)$, i.e. $\sigma(H)$. Thus $t_{\sigma}ht_{\sigma}^{-1} \in\sigma(H)\subseteq\bar{N}$.
\end{proof}

\begin{rmk} The two conditions in \ref{topthm} are already satisfied by most interesting \'etale torsors. For example it is known in the literature \ref{conn} that any triple $(P,G,p)\in I_{\text{co}}(X/k,\bar{x})$ is $I_{\text{co}}$-saturated if and only if it is connected.  Thus in view of \ref{descentlem} and \ref{exactness} this proposition  can be seen as a generalization of the
fundamental exact sequence of the \'etale fundamental group
\cite[Expos\'e IX, Th\'eor\`eme 6.1]{SGA1}. 
\end{rmk}

\begin{ex}\label{normality} In this part, we would like to construct an example showing that for a saturated object $(P,G,p)\in I_{\text{\'et}}(X/k,\bar{x})$ which does not satisfy any of the conditions in \ref{topthm}  the image of the composition $$\pi^{E}(\bar{X}/k,\bar{x})\to\pi^E(X/k,\bar{x})\twoheadrightarrow G$$ needs  not  be a normal subgroup of $G$.

Let $X$ be any scheme which is geometrically connected over a field $k$, and let $Y\to X$ be a torsor under the constant group scheme $(\Z/2\Z)_k$ with $Y$ geometrically connected over $k$.  Now we are going to construct a finite Galois field extention $K/k$ with Galois group $M$,  a torsor $P_K'$ over $X_K$ under an abstract group $G_K'$ which contains the $X_K$-scheme $Y_K$  as a connected component. We will also construct a twisted action of $M$ on the $X_K$-scheme $P_K'$ and an action of $M$ on the abstract group $G_K'$ in such a way that these two actions are compatible.

Let $n\in\N^+$ be an even number which is equal to or larger than 2. Let $$G_K'=((\langle a_1\rangle\times\langle a_2\rangle\times\langle a_3\rangle\times\langle a_4\rangle)\rtimes(\langle b_1\rangle\times\langle b_2\rangle))\rtimes\langle \xi\rangle,$$where $\langle a_1\rangle=\langle a_2\rangle=\langle a_3\rangle=\langle a_4\rangle\cong\Z/2n\Z$, $\langle b_1\rangle=\langle b_2\rangle=\langle \xi\rangle\cong\Z/2\Z$. The actions are defined by the following relations.

\[\xymatrix{b_1a_1=a_2b_1& b_1a_2=a_1b_1&b_1a_3=a_3b_1&b_1a_4=a_4b_1}
\]
\[\xymatrix{ b_2a_1=a_1b_2& b_2a_2=a_2b_2& b_2a_3=a_4b_2&b_2a_4=a_3b_2}
\]
\[\xymatrix{ \ \ \ \xi b_1=b_1\xi &\xi b_2=a_3^na_4^nb_2\xi&\xi a_i=a_i^{n+1}\xi &i=1,2,3,4.}
\]

In addition we can define an action of $\Z/2\Z=\{e,\sigma\}$ on $G_K'$ (via group automorphisms). The action is given by the following equations.
\[\xymatrix{\sigma(a_1)=a_3&\sigma(a_2)=a_4&\sigma(a_3)=a_1&\sigma(a_4)=a_2}
\]
\[\xymatrix{\sigma(b_1)=b_2&\sigma(b_2)=b_1&\sigma(\xi)=a_1^na_3^n\xi.&\hspace{40pt}}
\]

Next we construct the $G_K'$-torsor $P_K'$. Let $H_K'\subseteq G_K'$ be the subgroup $$(\langle a_1\rangle\times\langle a_2\rangle\times\langle a_3\rangle\times\langle a_4\rangle)\rtimes(\langle b_1\rangle\times\langle b_2\rangle)\subseteq G_K'$$ and let $$P':= \coprod_{i\in H_K'} Y_i$$ be the disjoint union of copies of $Y$ (i.e. $Y_i=Y$). We define a right action of $G_K'$ on $P'$  in the following way. If $j\in H_K'$ then the action of $j$ on $Y_i$ is defined by the identity morphism $Y=Y_i\to Y_{ij}=Y$. If $j\notin H_K'$, then $ij$ is uniquely written as a product $ij=\xi k$ with $k\in H_K'$. Then the action of $j$ on $Y_i$ will be the morphism  $Y=Y_i\to Y_k=Y$ given by the action of the non-trivial element of $\Z/2\Z$ (remember that $Y$ is a $\Z/2\Z$-torsor over $X$). Viewing $G_K'$ as a constant group scheme over $k$, one gets a morphism $$\rho: P'\times_k G_K'\to P'$$ over $X$ defining the right action. This action actually defines  $P'$ as a $G_K'$-torsor over $X$. Indeed, one can take a geometric point $\bar{x}\in X$. Then the fibre of $\bar{x}$ under the projection $Y_e=Y\to X$ consists of two points. We pick any point in the fibre and denote it by $p$. Then the other point is $p\xi$. We can also translate $p$ and $p\xi$ by the group $H_K'$. In this way we get all the fibres of $\bar{x}$ under the projection $P'\to X$. Each fibre can be written uniquely as $pi$ or $p\xi i$ for some $i\in H_K'$. By the very definition of the action of $G_K'$ on the set of fibres of $\bar{x}$, one sees that the set of all fibres of $\bar{x}$ is a principal homogeneous space under $G_K'$. Hence the  $X$-morphism $$P'\times_kG_K'\xrightarrow{id\times\rho} P'\times_{X}P'$$ induces an isomorphism at the fibre of $\bar{x}\in X$. Since $P'\times_kG_K'$ and $P'\times_{X}P'$ are all finite \'etale $X$-schemes and $X$ is connected, the morphism $id\times\rho$ is an isomorphism by \cite[Expos\'e V, Th\'eor\`eme 4.1]{SGA1}. Therefore $P'$ is a $G_K'$-torsor over $X$. Moreover, we would like to introduce two actions on $P'$ by $\Z/2n\Z=\langle u\rangle$ and $\Z/2\Z=\langle v\rangle$ respectively. The action of $u$ on a component $Y_i$ is defined to be the identity morphism $Y=Y_i\to Y_{a_1a_3\sigma(i)}=Y$. Notice that we have a commutative diagram \begin{equation}\label{skull}\xymatrix{P'\ar[r]^-g\ar[d]_-u&P'\ar[d]^-u\\P'\ar[r]^-{\sigma(g)}&P'}\tag{$\skull$}\end{equation} for all $g\in G_K'$, even when $g\notin H_K'$. The action of $v$ on a component  $Y_i$ is defined to be the identity morphism $Y=Y_i\to Y_{b_1i}=Y$. Similarly, we have a commutative diagram  \begin{equation}\label{queen}\xymatrix{P'\ar[r]^-g\ar[d]_-v&P'\ar[d]^-v\\P'\ar[r]^-{g}&P'}\tag{\symqueen}\end{equation} for all $g\in G_K'$.

Next we would like to construct a finite group $M$ generated by two elements $\{x,y\}$ and two group homomorphisms $$f: M\rightarrow\Aut_{X}(P') \hspace{20pt}\text{and}\hspace{20pt} g: M\rightarrow \Aut_{}(G_K')$$ such that $f(x)=u,f(y)=v$ and $g(x)=\sigma, g(y)=id$, where $P'$ is considered as an object in the category of $X$-schemes and $G_K'$ is considered as an object in the category of abstract groups. There should be some general procedure to obtain such $M$ and $f,g$, because all the automorphism groups are finite. But unfortunately the author has to rely on some brutal computational methods. For  simplicity   we treat only the case when $n=2$.
In this case, we first consider the following equations.
\begin{enumerate}[(i)]
\item $$xyx^2yx=yxyx^2yxy=yx^3yx^2yx^3y$$
\item $$yx^2yx^2=x^2yx^2y$$
\item $$\begin{aligned} &x^3yxy=yx^3yx\\&yxyx^3=xyx^3y\end{aligned}$$
\item $$\begin{aligned} &x^3yx^3yx^2yxy=yx^3yx^3yx^3y=yxyx^2yx^3yx^3\\& xyxyx^2yx^3y=yxyxyxy=yx^3yx^2yxyx\end{aligned}$$
\item $$x^3yx^2yxyx^2y=xyx^2yx^3yx^2y=yx^2yx^3yx^2yx=yx^2yxyx^2yx^3$$
\item $$yxyxyxyx=x^3yx^3yx^3yx^3y$$
\item $$x^4=1\hspace{50pt} y^2=1$$
\end{enumerate}

One observes first that the above equations hold in $ \Aut_{}(G_K')$ when one replaces $x$ by $\sigma$, $y$ by $id$, and the same hold  in $\Aut_{X}(P')$ when one replaces $x$ by $u$, $y$ by $v$. The former is somewhat clear, the latter needs some computation. To verify the above equations for $u,v$, we choose a geometric point $\bar{x}\in X$ and a fibre $p$ of $\bar{x}$ under $Y_e=Y\to X$, then check whether the actions from both sides of the equation are agree on $p$. If so, one could then use \ref{queen}, \ref{skull} and the compatibility in $ \Aut_{}(G_K')$ to move $p$ around, through all the fibres of $\bar{x}$ under $P_K'\to X_K$, and finally conclude that the actions from both sides of the equation  agree on all fibres. Then the equations for $u,v$ follow from \cite[Expos\'e V, Th\'eor\`eme 4.1]{SGA1}. Here is the result of the calculations.
\begin{enumerate}[(i)]
\item $$uvu^2vu(p)=vuvu^2vuv(p)=vu^3vu^2vu^3v(p)=p(a_3a_4)^2$$
\item $$vu^2vu^2(p)=u^2vu^2v(p)=p(a_1a_2)^2$$
\item $$\begin{aligned} &u^3vuv=vu^3vu(p)=pa_3^3a_4b_1b_2\\&vuvu^3(p)=uvu^3v(p)=pa_3a_4^3b_1b_2\end{aligned}$$
\item $$\begin{aligned} &u^3vu^3vu^2vuv(p)=vu^3vu^3vu^3v(p)=vuvu^2vu^3vu^3(p)=pa_1^3a_3^3a_2^2a_4^2\\& uvuvu^2vu^3v(p)=vuvuvuv(p)=vu^3vu^2vuvu(p)=pa_1a_3a_2^2a_4^2\end{aligned}$$
\item $$vuvuvuvu(p)=u^3vu^3vu^3vu^3v(p)=p(a_1a_2a_3a_4)^2$$
\item $$\begin{aligned}&u^3vu^2vuvu^2v(p)=uvu^2vu^3vu^2v(p)\\=&vu^2vu^3vu^2vu(p)=vu^2vuvu^2vu^3(p)\\=&p(a_1a_2a_3a_4)^2\end{aligned}$$
\item $$u^4(p)=p\hspace{50pt} v^2(p)=p$$\end{enumerate}

Let $M$ be the free group generated by $x,y$ modulo the relations (i)-(vii). One can see without too much difficulty that $M$ is a finite group generated by $x,y$. Clearly there are group homomorphisms $f: x\mapsto u, y\mapsto v$ and $g: x\mapsto \sigma, y\mapsto id$.

Now let $L$ be any field of any characteristic. Choose an imbedding $M\subseteq S_m$ for some $m\in \N^+$, we get a faithful action of $M$ on $L(X_1,X_2,\cdots,X_m)$. Let $$K:=L(X_1,X_2,\cdots,X_m)\hspace{50pt} k:=K^M$$ where $K^M\subseteq K$ denotes the subfield of invariant elements under the action of $M$. Then $K/k$ is a finite Galois extension with Galois group $M$. 

Let $P_K':=P'\times_kK$. Then $P_K'$ is a $G_K'$-torsor over $X_K$. We also have an imbedding $Y_K=Y_e\times_kK\subseteq P'\times_kK=P_K'$. Since the connected covering $Y_K\to X_K$ comes, via base change,  from a Galois covering $Y\to X$ over $k$, it has to be again a Galois covering \cite[Expos\'e V, \S4, f), (ii)]{SGA1}. Thus the inclusion $\Aut_{X}(Y)\subseteq \Aut_{X_K}(Y_K)$ has to be an isomorphism (because $Y/X$ and $Y_K/X_K$ are of the same degree).

Now for each element $\alpha\in M$ we can define a twisted action on $P_K'$ via $$\xymatrix{ P_K'=P'\times_kK\ar[rr]^-{f(\alpha)\times\alpha}&& P'\times_kK =P_K'}.$$ By \ref{queen} and \ref{skull} this twisted action is compatible with the action of $M$ on $G_K'$.

Viewing $G_K'$ as a constant group scheme over $K$ and applying Galois descent we get a $k$-group scheme $G_k'$ and a right $G_k'$-torsor $P_k'$ over $X$ such that the pull-back of $(P_k',G_k')$ to $K$ is $(P_K', G_K')$. Choosing a geometric point $\bar{x}: \Spec(\bar{K})\to X_K$ and a lifting $p:\Spec(\bar{K})\to Y_e\times_kK\subseteq P_K'$, we get a triple $(P_k',G_k',p)\in I_{\text{\'et}}(X/k,\bar{x})$. This triple corresponds to a homomorphism $\pi^{E}(X/k,\bar{x})\to G_k'$. Let $G\subseteq G_k'$ be the image, $(P,G,p)\subseteq (P_k',G_k',p)$ be the triple in $I_{\text{\'et}}(X/k,\bar{x})$ corresponding to  $\pi^{E}(X/k,\bar{x})\twoheadrightarrow G\subseteq G_k'$.

In this case, $(P,G,p)$ is a saturated object by definition, and the pull-back $G_K\subseteq G_K'$ is a subgroup stable under the action of $M$. Since $P_K\subseteq P_K'$ is a subscheme containing $p\in Y_e\times_kK$ and is stable under the action of $M$, $P_K$ contains $$Y_e\times_kK,\ \ \ \  Y_{b_1}\times_kK=y(Y_e\times_kK),\ \ \ \ Y_{a_1a_3}\times_kK=x(Y_e\times_kK).$$ As $Y_{b_1}=Y_e b_1$ and $Y_{a_1a_3}=Y_ea_1a_3$, $G_K$ contains $\xi, b_1, a_1a_3$. Just like in   \ref{topthm}, we denote by $N$ the image of the homomorphism $$\pi^N(\bar{X}/k,\bar{x})\to  G$$   corresponding to the triple $(\bar{P}, {G},p)$. Then $\bar{N}$ is generated by the $\Gal(\bar{k}/k)$-orbit of $\{e,\xi\}\subseteq G_K=\bar{G}$, or equivalently by the $M$-orbit of $\{e,\xi\}\subseteq {G}_K$, i.e.   $$\bar{N}:=\{e,\xi,(a_1a_3)^n, (a_1a_3)^n\xi\}.$$ However, $\bar{N}\subseteq \bar{G}$ is not a normal subgroup for $b_1(a_1a_3)^nb_1^{-1}=(a_2a_3)^n\notin \bar{N}$.
\end{ex}
\begin{rmk} (i) In the above example, we could take $X:=\A_k^1$ and $Y\to X$ to be the Artin-Schreier covering under $\F_2$ if $k$ is of characteristic 2. \\
(ii) We could also take $A$ to be an abelian variety over a field $l$ of characteristic $\neq 2$, then $D:=A[2]\times_l\bar{l}$ is a constant group scheme of order $2^{2\dim(A)}$.  Suppose $A[2]$ stays as a constant group scheme over $l\subseteq L\subseteq \bar{l}$ and $D\twoheadrightarrow \Z/2\Z$ is a surjective homomorphism. Let $k:=L(X_1,X_2,\cdots,X_m)^M$ be as in the example, $X:=A\times_lk$. Then we could define the $\Z/2\Z$-torsor $Y\to X$ to be the one obtained by taking the contracted product of the $D$-torsor $X\xrightarrow{2\cdot} X$ along $D\twoheadrightarrow\Z/2\Z$. Clearly $Y$ is  geometrically connected over $k$. In this example $k$ is allowed to be of  any characteristic  $\neq 2$.
\end{rmk}

\begin{lem}\label{descentlem} Let $X$ be a  geometrically connected quasi-compact scheme over a  field $k$, $G$ be a finite \'etale group scheme over $k$, and $P$ be a $G$-torsor  over $X$.  If $\bar{P}$ is a trivial $G$-torsor over $\bar{X}$, then there is a $G$-torsor $Q$ over $k$ whose pull-back along $X\to k$ is  $P$.
\end{lem}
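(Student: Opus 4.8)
The plan is to reduce the whole problem to a \emph{finite} Galois extension and then run a Galois-descent argument whose only non-formal ingredient is a computation of the group of ``sections'' of $G$ over $X_K$. First I would spread the trivialization out to a finite level. Since $G$ is finite \'etale, it becomes constant over a finite separable extension of $k$; enlarging that extension to a Galois closure gives a finite Galois extension $k\subseteq K_0$, with $\Delta_0:=\Gal(K_0/k)$, such that $G_{K_0}$ is the constant group scheme attached to a finite group $\Gamma$. The hypothesis that $\bar P$ is trivial provides a section of $\bar P\to\bar X$. Because $P\to X$ is finite \'etale, hence affine and of finite presentation, and because $\bar X=\varprojlim_K X_K$ over the finite subextensions $K\supseteq K_0$ (with affine transition maps) while $X$ is quasi-compact, a standard spreading-out argument shows that this section is already defined over $X_K$ for some finite $K$. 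Replacing $K$ by a finite Galois extension of $k$ containing it, I may assume that $k\subseteq K$ is finite Galois with group $\Delta:=\Gal(K/k)$, that $G_K$ is constant, and that $P_K:=P\times_kK$ is a trivial $G_K$-torsor over $X_K$.

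The heart of the argument is the following identification. Since $X$ is geometrically connected over $k$, the scheme $X_K$ is connected; since $G_K$ is the constant group scheme $\coprod_{\gamma\in\Gamma}\Spec K$, any $K$-morphism from the connected $K$-scheme $X_K$ into $G_K$ lands in a single component and is then forced to be the structure morphism to that $\Spec K$. Consequently the pullback along the structure morphism $X_K\to\Spec K$ gives a bijection $$G(K)\longrightarrow G(X_K),\qquad G(X_K)=\Hom_k(X_K,G)=\Hom_K(X_K,G_K),$$ both sides being canonically identified with $\Gamma$. Being induced by pullback, this map is functorial, hence compatible with the two natural $\Delta$-actions --- the Galois action on $G(K)$ of Convention (x) and the action on $G(X_K)$ induced from the $\Delta$-action on the covering $X_K\to X$ --- so it is an isomorphism of $\Delta$-groups. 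This is precisely the step using \emph{both} hypotheses: $G$ \'etale (so that $G_K$ is a disjoint union of copies of $\Spec K$) and $X$ geometrically connected (so that $X_K$ is connected); were $X$ not geometrically connected there would be strictly more sections and the descent obstruction need not vanish.

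Finally I would invoke Galois descent for the finite \'etale Galois covering $X_K\to X$ with group $\Delta$. The $G$-torsors over $X$ split by $X_K$ are classified by the pointed set $H^1(\Delta,G(X_K))$, and those over $\Spec k$ split by $K$ by $H^1(\Delta,G(K))$, with pullback along $X\to\Spec k$ corresponding to the map induced by $G(K)\to G(X_K)$. Since that map is an isomorphism of $\Delta$-groups, the induced map on $H^1$ is a bijection, so the class of $P$ descends to a unique class over $\Spec k$; unwinding, this is a $G$-torsor $Q$ over $\Spec k$ with $Q\times_kX\cong P$. Without cohomology the same is phrased concretely: the twisted $\Delta$-action presenting $P$ permutes the components $\coprod_\gamma X_K$ of $P_K$ by a cocycle $c\colon\Delta\to\Gamma$, and this very $c$ defines a twisted action on $\coprod_\gamma\Spec K$ whose Galois descent (Convention (ix)) is the desired $Q$.

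I expect the main obstacle to be the spreading-out step under the sole hypothesis of quasi-compactness: without separatedness one must descend the section chart by chart over a finite affine cover and then check by hand that the pieces glue over the (possibly non-affine) overlaps, enlarging $K$ finitely often to absorb the finitely many coefficients involved. Once the equivariant identification $G(X_K)\cong G(K)$ is in place, the descent itself is entirely formal.
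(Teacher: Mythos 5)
Your proof is correct and follows essentially the same route as the paper: both reduce to a finite Galois extension $K/k$ over which $G$ is constant and $P_K$ is trivial, and both exploit the connectedness of $X_K$ (i.e.\ geometric connectedness of $X$) to see that any descent datum on $P_K\cong\coprod_{\gamma\in\Gamma}X_K$ is a twisted permutation of the components --- equivalently, that $G(K)\to G(X_K)$ is an isomorphism of $\Gal(K/k)$-groups --- and then transport that datum to $\coprod_{\gamma\in\Gamma}\Spec(K)$ to produce $Q$. Your nonabelian-$H^1$ packaging absorbs the paper's explicit diagram checks that the $G$-action itself descends along with the scheme $P$, and your spreading-out paragraph supplies the reduction to finite level (existence of a suitable $K$) that the paper asserts without proof; this is exactly where quasi-compactness enters, and your fix for possibly non-affine overlaps works, since two sections over $X_K$ that agree after the faithfully flat base change to $\bar{k}$ already agree over $K$.
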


\begin{proof} Let $K$ be an intermediate finite Galois extension of $k\subseteq \bar{k}$ over which $G_K$ becomes a constant group scheme and $P_K$ remains a trivial torsor. Since $P_K$  is a trivial $G$-torsor over $X_K$, by choosing an $X_K$-section for the projection $\pi:P_K\to X_K$ we get  isomorphisms (in the category of $X_K$-schemes) $${P}_K\cong{X}_K\times_kG={X}_K\times_{{K}}G_K=\coprod_{i\in{G}_K}{X}_K.$$  By Galois descent, giving the $X$-scheme $P$ is equivalent to giving a twisted action of $\Gal(K/k)$ on ${P}_K$, i.e. a   homomorphism $$f:\Gal( {K}/k)\to \Aut_X({P}_K)$$ such that the following diagram\begin{equation}\label{semilinear}\xymatrix{P_K\cong\coprod_{i\in{G}_K}{X}_K\ar[rr]^-{f(\sigma)}\ar[d]_{\pi}&&\coprod_{i\in{G}_K}{X}_K\cong P_K\ar[d]^{\pi}\\  {X}_K\ar[rr]^{id\times\sigma}\ar[d]&& {X}_K\ar[d]\\ K
\ar[rr]^{\sigma}&&K
}\tag{$*$}\end{equation} is commutative for all $\sigma\in\Gal(K/k)$. One observes that, as $X_K$ is connected, such a twisted action on $P_K\cong\coprod_{i\in{G}_K}{X}_K$ is none other than a permutation of the connected components in a twisted manner. The observation can be written  more formally.

Let $n\in\N^+$ be the order of the $k$-group scheme $G$, $S_n$ be the $n$-th permutation group. Then there is a unique group homomorphism $$\lambda_X: S_n\times \Gal(K/k)\to \Aut_X(P_K)$$  whose restriction to $S_n$ is the permutation of the connected components of $P_K$ and whose restriction to $\Gal(K/k)$ is  $$\hspace{20pt}\sigma\ \  \longmapsto\ \ P_K\cong(\coprod_{i\in G_K}X)\times_kK\xrightarrow{id\times \sigma}(\coprod_{i\in G_K}X)\times_kK\cong P_K. $$ The observation means that there is a group homomorphism $\theta: \Gal(K/k)\to S_n$ making the following diagram $$\xymatrix{\Gal(K/k)\ar[rr]^-{f}\ar[dr]_-{\theta\times id}&&\Aut_X(P_K)\\&S_n\times \Gal(K/k)\ar[ur]_-{\lambda_X}&}$$ commutative.

In the above, we could replace $P_K$ by the $k$-scheme $G_K=\coprod_{i\in{G}_K}\Spec(K)$ to obtain a homomorphism $$\lambda_k: S_n\times \Gal(K/k)\to \Aut_k(G_K)$$ where $G_K$ is regarded as an object in the category of $k$-schemes. In this way, we get a homomorphism $$g:\Gal( {K}/k)\xrightarrow{\theta\times id}S_n\times\Gal(K/k)\xrightarrow{\lambda_k} \Aut_k({G}_K)$$ making the following diagram\[ \xymatrix{G_K=\coprod_{i\in{G}_K}\Spec(K)\ar[rr]^-{g(\sigma)}\ar[d] &&\coprod_{i\in{G}_K}\Spec(K)=G_K\ar[d] \\ K
\ar[rr]^{\sigma}&&K
} \] commutative for each $\sigma\in \Gal(K/k)$. In other words, we get a twisted action of $\Gal(K/k)$ on $G_K$. This defines a $k$-form $Q$ for the $K$-scheme $G_K$.

The $X$-scheme $X\times_kQ$ is an $X$-form of the $X_K$-scheme $P_K\cong X_K\times_KG_K=X\times_kG_K$. From the very definition of $Q$ we see that the twisted action of $\Gal(K/k)$ on  $P_K$ corresponding to the two $X$-forms $X\times_kQ$ and $P$ are the same. Therefore, by Galois descent $P\cong X\times_kQ$ as $X$-schemes. On the other hand  since $G$ is a $k$-form of the $K$-group scheme $G_K$, there is an action via group automorphisms $$\phi:\Gal(K/k)\to \Aut_{\text{grp}}(G_K)$$ corresponding to $G$. As $P$ is a $G$-torsor we have the following commutative diagram $$\xymatrix{P_K\ar[rr]^{a}\ar[d]_{f(\sigma)}&&P_K\ar[d]^{f(\sigma)}\\P_K\ar[rr]^{\phi(\sigma)(a)}&&P_K}$$ for all $\sigma\in\Gal(K/k)$ and $a\in G_K$. Since the identification $P_K\cong X_K\times_KG_K$ is equivariant under the right actions and the action of $G_K$  on $P_K$ is also just a permutation of connected components, we have another commutative diagram  $$\xymatrix{G_K\ar[rr]^{a}\ar[d]_{g(\sigma)}&&G_K\ar[d]^{g(\sigma)}\\G_K\ar[rr]^{\phi(\sigma)(a)}&&G_K}$$for all $\sigma\in\Gal(K/k)$ and $a\in G_K$. Therefore, by Galois descent there is an action of $G$ on $Q$ over $k$ which makes the isomorphism $P\cong X\times_kQ$ equivariant under $G$. Hence $Q$ is a  $G$-torsor over $k$ whose pull-back to $X$ is the $G$-torsor $P$.
\end{proof}

\subsection{The Infinitesimal Case}

\begin{prop}\label{localcase} Let $X$ be a geometrically connected separable scheme  over a field $k$.
Let $\bar{x}: \Spec(\bar{k})\to X$ be a geometric point. Then the canonical map$$\pi_k^{\bar{k}}: \pi^{L}(\bar{X}/k, \bar{x})\to\pi^{L}(X/k,\bar{x})$$ is surjective, but {\rm not}, in general, an isomorphism.
\end{prop}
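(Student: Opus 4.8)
The plan is to prove surjectivity by showing that pull-back along $\bar{X}\to X$ sends $I_{\text{lc}}$-saturated objects to $I_{\text{lc}}$-saturated objects, and to handle the failure of injectivity by a counterexample. Since in characteristic zero every finite local group scheme is trivial (Cartier), $\pi^L$ vanishes and the statement is vacuous; so I assume $\Char(k)=p>0$. By \ref{saturated} the saturated objects are cofinal, and by \ref{projlimit} we have $\pi^L(X/k,\bar{x})=\varprojlim_i G_i$ over the saturated $(P_i,G_i,p_i)$. A standard Hopf-algebra argument (the coordinate ring of the limit is the filtered union of the $\mathcal{O}(G_i)$, and the quotient onto the scheme-theoretic image is injective on each $\mathcal{O}(G_i)$) shows that a homomorphism into $\pi^L(X/k,\bar{x})$ is faithfully flat as soon as it surjects onto every $G_i$. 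Hence it suffices to prove: if $(P,G,p)\in I_{\text{lc}}(X/k,\bar{x})$ is saturated, then its pull-back $(\bar{P},G,p)\in I_{\text{lc}}(\bar{X}/k,\bar{x})$ is again saturated, i.e. the image $N$ of $\pi^L(\bar{X}/k,\bar{x})\to G$ equals $G$.

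First I would analyse $N$. As $\pi^L(\bar{X}/k,\bar{x})$ and $G$ are $k$-group schemes and the map is a $k$-homomorphism, $N\subseteq G$ is a closed $k$-subgroup scheme, and (exactly as in the proofs of \ref{imp} and \ref{saturated}) it is the saturation of $(\bar{P},G,p)$: there is a reduction $(\bar{Q},N,p)\subseteq(\bar{P},G,p)$ over $\bar{X}$. Such a reduction of structure group from $G$ to $N$ is the same datum as a section $s$ of the associated fibre bundle $\bar{R}:=\bar{P}/N\to\bar{X}$ with fibre $G/N$. Since $G$ is local, the quotient $G/N$ is finite and connected, hence local, so $\bar{R}\to\bar{X}$ is finite and radicial (a universal homeomorphism).

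The key point is a uniqueness statement: \emph{a finite radicial morphism over a reduced base has at most one section}. Indeed, locally two sections are two retractions $\phi_1,\phi_2\colon B\to A$ of a purely inseparable $A$-algebra $B$; for each $b\in B$ one has $b^{p^n}\in A$ for some $n$, so $(\phi_1(b)-\phi_2(b))^{p^n}=\phi_1(b)^{p^n}-\phi_2(b)^{p^n}=0$, and reducedness of $A$ forces $\phi_1=\phi_2$. Here I use that $X$ is separable over $k$, so $\bar{X}$ (and more generally every base change of $X$ by a reduced $k$-algebra) is reduced. Now since $N$ is defined over $k$, the bundle $\bar{R}$ is the base change of $R:=P/N$, a finite radicial $X$-scheme, and $s$ is an element of $R(\bar{X})$. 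I would descend $s$ by fpqc descent along the affine, hence quasi-compact, faithfully flat cover $\bar{X}\to X$: one must check that the two pull-backs of $s$ to $\bar{X}\times_X\bar{X}$ agree. This is exactly where perfectness of $k$ enters, since it makes $\bar{k}\otimes_k\bar{k}$ reduced, whence $\bar{X}\times_X\bar{X}=X\times_k(\bar{k}\otimes_k\bar{k})$ is reduced; the two pull-backs are then sections of one finite radicial morphism over a reduced base, so they coincide by the uniqueness above. Thus $s$ descends to a section $X\to P/N$, i.e. to a reduction $(Q,N,p)\subseteq(P,G,p)$ in $I_{\text{lc}}(X/k,\bar{x})$ (with $p\in Q$ because $Q\times_k\bar{k}=\bar{Q}\ni p$). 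As $(P,G,p)$ is saturated, the surjection $\pi^L(X/k,\bar{x})\twoheadrightarrow G$ cannot factor through the proper subgroup $N$, so $N=G$, and surjectivity follows.

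The main obstacle is precisely this descent of the canonical reduction, and it is where both hypotheses are used: reducedness of $\bar{X}$ (from separability) makes the reduction unique, and perfectness of $k$ (so that $\bar{k}\otimes_k\bar{k}$ is reduced) turns uniqueness into fpqc-descendability. This should be contrasted with the \'etale case \ref{topthm}, where a saturated torsor has several connected components permuted by $\Gal(\bar{k}/k)$, the analogous reduction is not unique, and only normality rather than surjectivity survives. Finally, for the non-isomorphism I would exhibit a nontrivial kernel: since the map is surjective, it is an isomorphism iff it is injective, and injectivity fails already for $X=\A_k^1$ over a perfect field of characteristic $p$, where $\bar{X}$ carries infinitesimal ($\alpha_p$-type) torsors whose moduli genuinely involve coefficients of $\bar{k}\setminus k$ that cannot be reabsorbed by automorphisms of the group scheme, so they are not subquotients of pull-backs from $X$. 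This is carried out in \ref{infconex}, and it shows that $\pi_k^{\bar{k}}$ is a proper quotient map, not an isomorphism.
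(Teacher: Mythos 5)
Your proposal is correct, and it follows the paper's global strategy for surjectivity exactly — take the image $N$ of $\pi^{L}(\bar{X}/k,\bar{x})\to G$ for a saturated $(P,G,p)$, convert the resulting sub-$N$-torsor into a section of $\bar{P}/N\to\bar{X}$, descend that section to $X$, pull back the $N$-torsor $P\to P/N$, and let saturation force $N=G$, with \ref{infconex} supplying the failure of injectivity — but you execute the one delicate step, descending the section, by a genuinely different mechanism. The paper observes that the section $\bar{X}\to\bar{P}/N$ is an infinitesimal closed immersion from a reduced scheme, identifies $\bar{X}$ with $(\bar{P}/N)_{\rm red}$, and then uses perfectness of $k$ through $(\bar{P}/N)_{\rm red}=(P/N)_{\rm red}\times_k\bar{k}$ to manufacture the section over $X$ directly as the reduction of $P/N$; you instead isolate a rigidity lemma (a finite radicial morphism over a reduced base has at most one section) and run fpqc descent along $\bar{X}\to X$, with perfectness entering via reducedness of $\bar{k}\otimes_k\bar{k}$, hence of $\bar{X}\times_X\bar{X}$ by separability of $X$, so that the cocycle condition holds automatically. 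These are two faces of the same phenomenon, but your route buys a quotable uniqueness statement and displays precisely where each hypothesis is used; notably, your $p^{n}$-th power computation is exactly the device of the paper's Lemma \ref{localdescent}, which the paper deploys only for the counterexample \ref{infconex}, so in effect you prove surjectivity with the counterexample's tool. Two refinements: your local argument tacitly assumes that every element of the radicial algebra has some $p^{n}$-th power in the base, which in your application is legitimate because $\mathcal{O}(\bar{P}/N)\subseteq\mathcal{O}(\bar{P})$ and $\bar{P}$ is a torsor under an infinitesimal group scheme (the same observation made in \ref{localdescent}); alternatively, the lemma holds for any separated universally injective morphism, since the equalizer of two sections is the pullback of the diagonal, a surjective closed immersion, hence a closed subscheme with full support, which must equal the reduced base. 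Finally, you make explicit the pro-group-scheme step (via the filtered union of Hopf subalgebras from the proof of \ref{unicov}) that surjectivity onto each $G_i$ of the cofinal saturated family, as in \ref{saturated}, implies surjectivity onto the limit — a point the paper's proof leaves implicit.
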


\begin{proof} Suppose we have a saturated object  $(P, G,p)\in I_{\text{lc}}(X/k,\bar{x})$. We take the image of the composition $$\pi^{L}(\bar{X}/k,\bar{x})\to\pi^{L}(X/k,\bar{x})\to G,$$ and denote it by $H$. By \ref{imp} there is $(\bar{Q},H,{q})\in N(\bar{X}/k,\bar{x})$ with a morphism $$(\bar{Q},H,{q})\hookrightarrow (\bar{P},G,p).$$
As $H\subseteq G$ is an infinitesimal closed imbedding (closed imbedding with nilpotent ideal sheaf), so is $\bar{Q}\subseteq \bar{P}$. Now take the quotient by $H$ on both sides. We get a section: $$\bar{s}: \bar{X}\cong \bar{Q}/H\to \bar{P}/H.$$
Since the projection $\bar{P}\to\bar{P}/H$ is faithfully flat, the ideal sheaf of  the section $\bar{s}$ is contained in the ideal sheaf of the infinitesimal imbedding $\bar{Q}\subseteq \bar{P}$. Hence imbedding $\bar{s}:\bar{X}=\bar{Q}/H\hookrightarrow \bar{P}/H$ is also  infinitesimal. As $\bar{X}$ is reduced, $\bar{X}=(\bar{P}/H)_{\rm red}$ is the unique reduced closed subscheme of $\bar{P}/H$. Because $k$ is perfect, we have$${(\bar{P}/H)}_{\rm red}=(P/H)_{\rm red}\times_k\bar{k}\hookrightarrow(P/H)\times_k\bar{k}= \bar{P}/H.$$ We have known that the composition $$\bar{X}=(\bar{P}/H)_{\rm red}\hookrightarrow \bar{P}/H\to \bar{X}$$ is an isomorphism, so $(P/H)_{\rm red}\to X$ is also an isomorphism. In this way we get a section $s$ for the $X$-scheme $P/H$. Now we pull back the $H$-torsor $P\to P/H$ via $s$. $$\xymatrix{Q\ar[r]\ar[d]& X\ar[d]^s\\P\ar[r]& P/H}$$ Then we get a triple $(Q,H,q)\in I_{\text{lc}}(X/k,\bar{x})$   which dominates $(P,G,p)$. In other words, the map $\pi^L(X/k,\bar{x})\to G$ factors through the imbedding $H\subseteq G$. Hence if $(P,G,p)$ is saturated in $I_{\text{lc}}(X/k,\bar{x})$ then $(\bar{P},G,p)$ is saturated in $I_{\text{lc}}(\bar{X}/k,\bar{x})$. This means that $\pi_k^{\bar{k}}$ is surjective. For the failure of the injectivity see \ref{infconex}.
\end{proof}

\begin{cor} \label{loccor} Let $X$ be a geometrically connected separable scheme  over a field $k$. Let $\bar{x}: \Spec(\bar{k})\to X$ be a geometric point.
The canonical map$$\pi_k^{\bar{k}}: \pi^{L}(\bar{X}/k, \bar{x})\to\pi^{L}(X/k,\bar{x})$$ is an isomorphism if and only if for any $G$-torsor $Y\to \bar{X}$
with $G$ a finite local $k$-group scheme, there exists a  $G$-torsor
${P}$ over ${X}$ whose pull-back is isomorphic to $Y$  as a $G$-torsor.
\end{cor}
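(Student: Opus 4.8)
The plan is to pass through Tannakian duality and reduce the isomorphism statement to essential surjectivity of the base-change functor, which I will then match with the descent property. Write $\Phi\colon \mathfrak{C}(X/k,\bar{x},L)\to\mathfrak{C}(\bar{X}/k,\bar{x},L)$ for the $k$-linear tensor functor $(V,(P,G,p))\mapsto(V,(\bar{P},G,\bar{p}))$ induced by $\bar{X}\to X$; under the anti-equivalence between neutral Tannakian categories and affine $k$-group schemes (see \ref{projlimit}) this $\Phi$ is exactly the functor corresponding to $\pi_k^{\bar{k}}$. By \ref{localcase} the homomorphism $\pi_k^{\bar{k}}$ is surjective, so $\Phi$ is fully faithful. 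Consequently $\pi_k^{\bar{k}}$ is an isomorphism if and only if $\Phi$ is an equivalence, and since $\Phi$ is already fully faithful this happens exactly when $\Phi$ is essentially surjective. Thus the whole corollary comes down to identifying essential surjectivity of $\Phi$ with the asserted descent condition.

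First I would record a bookkeeping remark so that I may work with unpointed torsors, as the corollary does. Since $G$ is local and $\bar{x}$ is a geometric point, the fibre over $\bar{x}$ of any $G$-torsor over $\bar{X}$ is a $G$-torsor over $\Spec(\bar{k})$, hence trivial, and it carries a unique $\bar{k}$-point because $G(\bar{k})=\{1\}$. Therefore every $G$-torsor over $\bar{X}$ (and likewise over $X$) underlies a well-defined object of $I_{\text{lc}}(\bar{X}/k,\bar{x})$ (resp. $I_{\text{lc}}(X/k,\bar{x})$), and any $G$-equivariant isomorphism of such torsors automatically matches the distinguished points, since it carries the fibre over $\bar{x}$ to the fibre over $\bar{x}$.

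For the implication "$\Leftarrow$", every object of $\mathfrak{C}(\bar{X}/k,\bar{x},L)$ has the shape $(V,(\bar{P},G,\bar{p}))$; if $\bar{P}$ descends to a $G$-torsor $P$ over $X$, then by the previous remark $(\bar{P},G,\bar{p})\cong\Phi((P,G,p))$, whence $(V,(\bar{P},G,\bar{p}))\cong\Phi((V,(P,G,p)))$, so $\Phi$ is essentially surjective and $\pi_k^{\bar{k}}$ is an isomorphism. For "$\Rightarrow$", assume $\pi_k^{\bar{k}}$ is an isomorphism and let $Y\to\bar{X}$ be a $G$-torsor with $G$ finite local. By \ref{imp} over $\bar{X}$ the pointed torsor $(Y,G,\bar{p})$ is classified by a homomorphism $\psi\colon\pi^{L}(\bar{X}/k,\bar{x})\to G$; I then set $\phi:=\psi\circ(\pi_k^{\bar{k}})^{-1}\colon\pi^{L}(X/k,\bar{x})\to G$ and take the object $(P,G,p)\in I_{\text{lc}}(X/k,\bar{x})$ classified by $\phi$ via \ref{imp} over $X$. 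Using that pull-back of a torsor along $\bar{X}\to X$ corresponds to pre-composing its classifying homomorphism with $\pi_k^{\bar{k}}$, the pull-back of $(P,G,p)$ is classified by $\phi\circ\pi_k^{\bar{k}}=\psi$, hence is isomorphic to $(Y,G,\bar{p})$; in particular $P$ pulls back to $Y$ as a $G$-torsor.

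The Tannakian inputs I rely on — that a surjection induces a fully faithful restriction functor, and that a fibre-functor-compatible equivalence of neutral Tannakian categories induces an isomorphism of Tannakian groups — are standard. The one point that will need genuine care, and which I expect to be the crux of making the two directions fit together, is the naturality of \ref{imp} under the base change $\bar{X}\to X$: namely that the homomorphism classifying the pull-back of a pointed torsor is exactly the composite of its classifying homomorphism with $\pi_k^{\bar{k}}$. Everything else is formal once this compatibility, together with the uniqueness of the lifted point in the local case, is in place.
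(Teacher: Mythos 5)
Your proposal is correct and follows essentially the route the paper intends: its own proof is the one-line remark that the corollary is ``an immediate consequence of \ref{imp} and \ref{localcase}'', i.e.\ exactly your combination of the classification of pointed torsors by homomorphisms out of $\pi^{L}$, the surjectivity of $\pi_k^{\bar{k}}$, and the observation that for a local group scheme the base point on a torsor is unique (so pointed and unpointed isomorphism classes agree, as the paper itself notes in \ref{faildescent}). Your Tannakian packaging of the ``$\Leftarrow$'' direction (full faithfulness of the pull-back functor from surjectivity, plus essential surjectivity from the descent hypothesis) is just a cosmetic rephrasing of the same argument, and the compatibility you flag --- pull-back of torsors corresponding to precomposition with $\pi_k^{\bar{k}}$ --- indeed holds by the construction in \ref{unicov}.
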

\begin{proof} This is an immediate consequence of \ref{imp} and \ref{localcase}.
\end{proof}

\begin{lem}\label{localdescent} Let $X$ be a scheme  over a perfect field $k$ of characteristic $p$. If there is a {\rm reduced} $X$-scheme $Y$ whose pull-back $\bar{Y}$ is a torsor over  $\bar{X}$ under an infinitesimal $k$-group scheme $G$, and if $Y'$ is an $X$-scheme, then any $\bar{X}$-isomorphism $\phi:\bar{Y'}\cong \bar{Y}$ descends to  $X$.
\end{lem}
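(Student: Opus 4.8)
The plan is to descend $\phi$ by faithfully flat descent along the cover $\bar X\to X$, reducing the problem to a rigidity statement whose only genuine input is that $\bar Y$ is reduced while $G$ is infinitesimal. Since $k$ is perfect, $\bar k/k$ is separable, so any morphism between the finitely presented $X$-schemes $\bar{Y'}$ and $\bar Y$ is already defined over a finite subextension; enlarging it I may assume there is a finite Galois extension $K/k$ and an $X_K$-isomorphism $\phi_K\colon Y'_K\to Y_K$ whose base change to $\bar X$ is $\phi$. By Galois descent for morphisms, $\Hom_X(Y',Y)=\Hom_{X_K}(Y'_K,Y_K)^{\Gal(K/k)}$, so it suffices to prove that $\phi_K$ is fixed by $\Gal(K/k)$, i.e.\ that for every $\sigma$ the automorphism $\alpha_\sigma:={}^{\sigma}\phi_K\circ\phi_K^{-1}\in\Aut_{X_K}(Y_K)$ is the identity.

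The heart of the argument is the rigidity claim $\Hom_{X_K}(Y_K,Y_K)=\{\id\}$. First, $Y$ is reduced and $K/k$ is separable (as $k$ is perfect), so $Y_K$ is reduced; likewise $G_K=G\times_kK$ is still infinitesimal with $(G_K)_{\red}=\Spec K$. Because $Y_K\to X_K$ is a $G_K$-torsor there is a canonical isomorphism $Y_K\times_{X_K}Y_K\cong Y_K\times_KG_K$ carrying the diagonal to the unit section. Given any $X_K$-endomorphism $\alpha$ of $Y_K$, the pair $(\id,\alpha)$ defines an $X_K$-morphism $Y_K\to Y_K\times_{X_K}Y_K$, and composing with the above isomorphism and the second projection yields a morphism $g\colon Y_K\to G_K$. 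Since $Y_K$ is reduced and $G_K$ is infinitesimal, every nilpotent section of $\sO_{G_K}$ maps to a nilpotent, hence to $0$, so $g$ factors through $(G_K)_{\red}=\Spec K$, i.e.\ $g$ is the unit morphism. This forces $(\id,\alpha)$ to equal the diagonal, whence $\alpha=\id$.

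Applying this to $\alpha=\alpha_\sigma$ gives ${}^{\sigma}\phi_K=\phi_K$ for all $\sigma\in\Gal(K/k)$, so $\phi_K$ descends to an $X$-isomorphism $\psi\colon Y'\to Y$; base changing $\psi$ to $\bar X$ returns $\phi$, which is exactly the assertion. Alternatively one can avoid the passage to a finite $K$ and argue in one step by fpqc descent along $\bar X\to X$: the two pullbacks of $\phi$ to $\bar X\times_X\bar X=X\times_k(\bar k\otimes_k\bar k)$ differ by an automorphism of $\bar Y\times_{\bar k}(\bar k\otimes_k\bar k)$, and the same reducedness computation---valid because $\bar k\otimes_k\bar k$ is reduced and $\bar Y$ is geometrically reduced over $\bar k$---shows this automorphism is trivial.

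The step I expect to require the most care is the rigidity claim, and specifically the two places where perfectness of $k$ enters: that reducedness of $Y$ survives base change (so that $Y_K$, and $\bar Y\times_{\bar k}(\bar k\otimes_k\bar k)$, remain reduced), and that $G_{\red}$ stays a single reduced rational point after base change, so that a morphism from a reduced scheme into the infinitesimal group is forced to land in the unit. Keeping track of the finite-presentation hypotheses needed to spread $\phi$ out to a finite level---which hold since torsors under finite flat $k$-group schemes are finitely presented and this property descends along $\bar X\to X$---is the only other routine bookkeeping.
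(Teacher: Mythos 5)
Your proof is correct, but it is a genuinely different argument from the paper's. The paper works affine-locally and purely ring-theoretically: writing $X=\Spec(A)$, $Y=\Spec(B)$, it characterizes $B$ inside $B\otimes_k\bar{k}$ as exactly the set of elements whose $p^n$-th power lies in $A$ (the infinitesimal torsor structure gives the forward inclusion; the converse is where perfectness and reducedness enter, via a primitive element $\alpha$ of a finite separable subextension, the observation that $\alpha^{p^n}$ is again primitive, and the vanishing of the nilpotent coefficients $s_i$ in $B$), and then simply checks that the image of $B'$ lands in this subset, producing the descended morphism by hand with no descent formalism at all. You instead isolate a rigidity statement --- a torsor under an infinitesimal group scheme over a \emph{reduced} base admits no nontrivial automorphisms, because the discrepancy map $g\colon Y\to G$ obtained from $Y\times_XY\cong Y\times_kG$ must factor through $G_{\red}=\Spec(k)$ --- and then observe that rigidity makes the cocycle condition for descent of $\phi$ automatic, so $\phi$ descends by the sheaf property of morphisms for fpqc covers. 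Your approach buys more: it yields the stronger statement $\Aut_{X_R}(Y_R)=\{\id\}$ over any reduced base change, uniqueness of the descended isomorphism, and it avoids the affine-local bookkeeping; perfectness enters only through geometric reducedness of $Y$ and reducedness of $\bar{k}\otimes_k\bar{k}$. The paper's computation buys an explicit Frobenius-theoretic description of $B\subseteq B\otimes_k\bar{k}$, which is of independent use. One caveat on your first (finite Galois level) route: the assertion that $Y_K\to X_K$ is a $G_K$-torsor is not among the hypotheses --- only $\bar{Y}$ carries a torsor structure, and an action need not descend to $K$ (indeed the paper's example following the lemma turns on exactly such a twisted action). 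This is harmless, since you may run the identical rigidity computation after base change to $\bar{X}$ (or to $X\times_k(\bar{k}\otimes_k\bar{k})$), where the torsor structure is given and into which $\Aut_{X_K}(Y_K)$ injects by faithful flatness; your alternative one-step fpqc paragraph does precisely this and moreover sidesteps the quasi-compactness bookkeeping needed to spread $\phi$ out to finite level.
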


\begin{proof} The claim is true if only if there is a map $\varphi: Y\to Y'$ fitting into the following diagram $$\xymatrix{\bar{Y}\ar[r]^{\phi}\ar[d]&\bar{Y'}\ar[d]\\Y\ar@{.>}[r]^{\varphi}& Y'}.$$ The problem being local on $X$, we could assume $X=\Spec(A)$, $Y=\Spec(B)$, $Y'=\Spec(B')$. We have to show that the image of the composition $\iota:B'\to B'\otimes_k\bar{k}\to B\otimes_k\bar{k}$ lands on $B$.

Since $\bar{Y}$ is a torsor over $\bar{X}$ under an infinitesimal group scheme, for any $x\in B\otimes_k\bar{k}$, $x^{p^n}\in A\otimes_k\bar{k}$ for $n\in \N$ sufficiently large. This implies that for any $x\in B$, $x^{p^n}\in A$ for $n\in \N$ sufficiently large, because $A\otimes_k\bar{k}\cap B=A$ inside $B\otimes_k\bar{k}$. Conversely, if $x\in B\otimes_k\bar{k}$ and $x^{p^n}\in A$ for some $n\in \N$, then $x\in B$. Indeed, as $k$ is perfect, we can assume $x\in B\otimes_kl$ for some finite separable extension $l/k$ of degree $m$. Let $l=k(\alpha)$ for some primitive element $\alpha\in l$. Then $x$ can be uniquely written as $x=s_0+s_1\otimes\alpha+s_2\otimes\alpha^2+\cdots+s_{m-1}\otimes\alpha^{m-1}$ with $s_i\in B$. Since $\alpha^{p^n}$ is still a primitive element in $l$, i.e. $l=k(\alpha)=k(\alpha^{p^n})$, $x^{p^n}\in A$ implies that $s_i^{p^n}=0$ for all $i>0$. As $B$ is reduced, $s_i=0$ for all $i>0$, hence $x\in B$. Thus $B\subseteq B\otimes_k\bar{k}$ is the subset consisting of elements whose $p^n$-th power is in $A$.

By the same argument as above, any element $x\in B'$ has $p^n$-th power in $A$. Hence $\iota(B')\subseteq B\otimes_k\bar{k}$ is contained in $B$. This completes the proof.
\end{proof}

\begin{cor}\label{infconex} Let $k$ be a perfect but not separably closed field of characteristic $p$.  If $X$ is $\A_{k}^1$  or an elliptic curve such that $X(F)=\alpha_{p,k}$ (where $F$ is the relative Frobenius),   then the surjective map $$\pi^L(\bar{X}/k,\bar{x})\to\pi^L(X/k,\bar{x})$$ is not an isomorphism. \end{cor}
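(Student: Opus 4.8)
The plan is to reduce the whole statement to the criterion of \ref{loccor}: the map $\pi_k^{\bar k}$ fails to be an isomorphism as soon as I can exhibit one torsor over $\bar X$ under a finite local $k$-group scheme that does \emph{not} descend to a torsor over $X$. The natural group to use is $G=\alpha_{p,k}=\Ker(F:\G_a\to\G_a)$, a finite local $k$-group scheme, and I would convert the descent question into a computation of the flat cohomology group $H^1_{\rm fppf}(X,\alpha_p)$ together with its behaviour under base change along $k\subseteq\bar k$. Since $k$ is perfect but not separably closed we have $\bar k=k^{\rm sep}\neq k$, so I may fix once and for all an element $\alpha\in\bar k\setminus k$; the entire point will be that $\alpha$ produces a class over $\bar X$ outside the image of the base-change map.

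The main tool is the exact sequence of fppf sheaves $0\to\alpha_p\to\G_a\xrightarrow{F}\G_a\to 0$, where $F$ is the $p$-power homomorphism, which is additive in characteristic $p$. Taking cohomology and using $H^i_{\rm fppf}(-,\G_a)=H^i(-,\sO)$ for quasi-coherent cohomology expresses $H^1(X,\alpha_p)$ through (co)kernels of Frobenius on $H^\bullet(X,\sO_X)$; because coherent cohomology commutes with the flat base change $k\to\bar k$, the induced map $H^1(X,\alpha_p)\to H^1(\bar X,\alpha_p)$ is identified with the scalar extension $-\otimes_k\bar k$ of the corresponding $k$-vector spaces. Non-surjectivity of this scalar extension then produces a non-descending torsor, and \ref{loccor} finishes the argument.

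Concretely, for $X=\A^1_k=\Spec(k[t])$ I would use $H^1(\A^1,\G_a)=0$ to get $H^1(\A^1_k,\alpha_p)=k[t]/F(k[t])=k[t]/k[t^p]$ (perfectness of $k$ identifies the image of Frobenius with $k[t^p]$), and likewise over $\bar k$; the base-change map is then induced by $k[t]\hookrightarrow\bar k[t]$, and the class of $\alpha t$ lies in $\bar k[t]/\bar k[t^p]$ but not in the $k$-span, so the $\alpha_p$-torsor $\Spec(\bar k[t][z]/(z^p-\alpha t))$ does not descend to $\A^1_k$. For the supersingular elliptic curve $E=X$ with $X[F]=\alpha_{p,k}$, I would note that $H^0(E,\sO_E)=k$ with $F$ bijective there, so the connecting term vanishes and $H^1(E,\alpha_p)=\Ker\!\big(F:H^1(E,\sO_E)\to H^1(E,\sO_E)\big)$; supersingularity forces the $p$-linear Frobenius (the Hasse--Witt map) on the one-dimensional space $H^1(E,\sO_E)$ to vanish, whence $H^1(E,\alpha_p)=H^1(E,\sO_E)$ is a one-dimensional $k$-line $V$. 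The base-change map becomes $V\to V\otimes_k\bar k$, which is never surjective, and for $0\neq v\in V$ the class $\alpha\cdot(v\otimes 1)$ gives a non-descending torsor. In both cases \ref{loccor} yields that $\pi_k^{\bar k}$ is not an isomorphism.

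The step I expect to require the most care is the cohomological bookkeeping rather than any single deep idea: checking that the identification of $H^1_{\rm fppf}(-,\alpha_p)$ with the (co)kernel of Frobenius on coherent cohomology is correct, that this identification is compatible with the pullback morphism so that it really becomes $-\otimes_k\bar k$, and—in the elliptic-curve case—invoking the standard fact that supersingularity is the vanishing of Frobenius on $H^1(\sO_E)$ and that this vanishing is preserved under passage to $\bar k$. Once these are in place, the reduction through \ref{loccor} and the final non-surjectivity observation are immediate.
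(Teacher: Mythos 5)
Your proposal is correct, but it proves non-descent by a mechanism genuinely different from the paper's. The paper also reduces to Corollary \ref{loccor}, but then stays entirely inside its own toolkit: it takes the relative Frobenius torsor $F:X\to X$ under $\alpha_{p,k}$, twists the action over $\bar k$ by the Hopf-algebra automorphism $x\mapsto ax$ of $\alpha_{p,\bar k}$ with $a\in\bar k\setminus k$, and then invokes Lemma \ref{localdescent} (descent of morphisms into reduced total spaces over a perfect field) to show that if the twisted torsor were pulled back from $X$, the twisted action map $\bar X\times_k\alpha_{p,k}\to\bar X$ would itself descend --- which is impossible since $a\notin k$. Your route replaces this twisting-plus-descent-of-morphisms argument by a direct computation of $H^1_{\mathrm{fppf}}(-,\alpha_p)$ via the sequence $0\to\alpha_p\to\G_a\xrightarrow{F}\G_a\to 0$, identifying the base-change map with $-\otimes_k\bar k$ on $k[t]/k[t^p]$ (resp.\ on $\ker F=H^1(\sO_E)$ in the supersingular case, where the hypothesis $X[F]=\alpha_{p,k}$ is exactly vanishing of the Hasse--Witt map, preserved under the $p$-linear extension to $\bar k$) and exhibiting $\alpha t$, resp.\ $\alpha\cdot(v\otimes 1)$, outside the image. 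Amusingly, your witness classes are essentially the same torsors the paper produces --- twisting the Frobenius torsor's class by $a$ is multiplication by $a$ in $H^1$ --- so the two arguments differ only in how non-descent is certified: yours buys a complete computation of the obstruction group and entirely avoids Lemma \ref{localdescent}, at the cost of importing standard but external fppf-cohomology facts (comparison of fppf with coherent cohomology, flat base change, the supersingularity criterion), whereas the paper's argument is softer and self-contained, needing no cohomology at all. Your cohomological bookkeeping is sound, including the one point worth stating explicitly: a torsor over $\bar X$ under the $k$-group $\alpha_{p,k}$ in the paper's sense is the same datum as an $\alpha_{p,\bar k}$-torsor, so classes in $H^1_{\mathrm{fppf}}(\bar X,\alpha_{p,\bar k})$ do classify the objects to which \ref{loccor} refers, and pullback from $X$ corresponds to the image of the base-change map.
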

\begin{proof} In any case, we have a non-trivial $\alpha_{p,k}$-torsor $F:X\to X$ defined by the relative Frobenius. Taking any $a\in \bar{k}\setminus k$ we can define a $\bar{k}-$automorphism of $\alpha_{p,\bar{k}}$ by the following map of Hopf-algebras:
$\bar{k}[x]/x^p\to \bar{k}[x]/x^p$ sending $x\mapsto ax$. This automorphism of $\alpha_{p,\bar{k}}$ defines a new action of $\alpha_{p,k}$ on $F:\bar{X}\to \bar{X}$ which makes $\bar{X}$ an $\alpha_{p,k}$-torsor over itself. But the new action $\bar{X}\times_k\alpha_{p,k}\to\bar{X}$ certainly does not descend to $X\times_k\alpha_{p,k}\to X$. However, if $\pi^L(\bar{X}/k,\bar{x})\to\pi^L(X/k,\bar{x})$ was an isomorphism, then by \ref {loccor} and \ref{localdescent} the morphism $\bar{X}\times_k\alpha_{p,k}\to\bar{X}$ descends to $X\times_k\alpha_{p,k}\to X$, a contradiction!
\end{proof}

\section{The Second Fundamental  Sequence}

\begin{thm} \label{exactness2} Let $X$ be a geometrically connected separable scheme  over a field $k$, and $\bar{x}:\Spec(\bar{k})\hookrightarrow X$ be a geometric point. Then there is a
 natural sequence of $\bar{k}$-group schemes \begin{equation}\label{seq2}1\to\pi^{I}(\bar{X}/\bar{k},\bar{x})\to \pi^{I}(X/k,\bar{x})\times_k\bar{k}\to  \pi^{I}(k/k,\bar{x})\times_k\bar{k}\to 1.\tag{2}\end{equation} It is a complex, always exact on the right, exact on the left if $k$ is perfect and if $X$ is quasi-compact and quasi-separated, but it is in general not  exact in the middle for $I=N,E,L$.
\end{thm}
\begin{proof} The homomorphism $\theta:\pi^{I}(\bar{X}/\bar{k},\bar{x})\to \pi^{I}(X/k,\bar{x})\times_k\bar{k}$ is obtained via composing $\chi_{\bar{k}/k}^I$ (cf. \ref{comparison0}) with the canonical morphism $$\delta:\hspace{10pt}\pi^I(\bar{X}/k,\bar{x})\times_k\bar{k}\to\pi^I({X}/k,\bar{x})\times_k\bar{k}$$ obtained by base-change from the morphism $\pi^I(\bar{X}/k,\bar{x}) \to\pi^I({X}/k,\bar{x})$ in the first fundamental sequence (\ref{seq1}). The fact that (\ref{seq2}) is a complex and that the right map is surjective follows from \ref{exactness}. As the image of $\theta$ is contained in the image of $\delta$ the failure of  exactness of (\ref{seq2}) follows from that of (\ref{seq1}). Now we show the left injectivity assuming $k$ perfect and $X$ q.c. and q.s..

Since $X$ is q.c. and q.s. and $k$ is perfect, any saturated triple $(P,G,p)\in I({\bar{X}}/\bar{k},\bar{x})$ is defined over some $X\times_kl$, where $l$ is a finite separable extension of $k$.  The Weil restriction $\Res_{X\times_kl/X}(P)$ is then a torsor under $\Res_{l/k}(G)$ over $X$ \cite[7.6, Theorem 4 and Proposition 5]{BLR}, and there are canonical adjunction maps $$\phi:\Res_{X\times_kl/X}(P)\times_k\bar{k}\to P\hspace{20pt}\text{and}\hspace{20pt}h:\Res_{l/k}(G)\times_k\bar{k}\to G$$ where $\phi$ has to be surjective for $P$ is a connected scheme. By choosing a $\bar{k}$-point $q$ in the fibre of $p\in P(\bar{k})$ we get a morphism $$(\phi,h): \ (\Res_{X\times_kl/X}(P)\times_k\bar{k}, \Res_{l/k}(G)\times_k\bar{k},q)\to (P,G,p)\in I({\bar{X}}/\bar{k},\bar{x}).$$ This means that for any surjective $\bar{k}$-homomorphism $\pi^I(\bar{X}/\bar{k},\bar{x})\twoheadrightarrow G$ we can find a ${k}$-group scheme $H$ (namely $ \Res_{l/k}(G)$) and a homomorphism $\pi^I(X/k)\to H$ with a commutative diagram 
$$\xymatrix{\pi^I(\bar{X}/\bar{k},\bar{x})\ar@{>>}[rr]\ar[dr]^-{\phi}&&G\\&H\times_k\bar{k}\ar@{>>}[ur]&}$$ where $\phi$ is the natural composition $\pi^I(\bar{X}/\bar{k},\bar{x})\xrightarrow{\theta}\pi^I(X/k)\times_k\bar{k}\to H\times_k\bar{k}$. Thus $\theta$ induces a surjection of the Hopf-algebras, and is therefore a closed imbedding.
\end{proof}

\section{Further Remarks}

Since Nori's original definition is very geometric, it is hard to adapt some arithmatic problems arised from the \'etale fundamental group to Nori's setting. One of such  arithmetic problems is the section conjecture:

Let $X$ be a smooth projective geometrically connected curve of genus $\geq 2$ over a field $k$ finitely generated over $\Q$. Let $\bar{x}$ be a geometric point of $X$. Consider the fundamental exact sequence. \begin{equation}1\to \pi_1^{\text{\rm\'et}}(\bar{X},\bar{x})\to\pi_1^{\text{\rm\'et}}({X},\bar{x})\xrightarrow{\pi} \Gal(\bar{k}/k)\to 1\tag{FES}\end{equation} Let $\sect_{}(k,X)$ be the set of sections of {$\pi$}, i.e. continous group homomorphisms from $\Gal(\bar{k}/k)\to\pi_1^{\text{\rm\'et}}({X},\bar{x})$ whose composition with $\pi$ is the identity of $\Gal(\bar{k}/k)$. In $\sect_{}(k,X)$ we define an equivalence relation: Two sections $f,g$ are equivalent if there exists an element $a\in\pi_1^{\text{\rm\'et}}(\bar{X},\bar{x})$ such that $f$ and $g$ differ by the inner automorphism of $\pi_1^{\text{\rm\'et}}({X},\bar{x})$ defined by $a$. We denote $\sect_{\sim}(k,X)$ the set of \textit{sections classes}. If $X$ has a rational point $y\in X(k)$, then we get a section class $y_*\in\sect_{\sim}(k,X)$ by the functoriality of $\pi_1^{\text{\rm\'et}}$. It can be shown that the so defined map $X(k)\to \sect_{\sim}(k,X)$ is injective. The section classes of the form $y_*$ are called \textit{geometric sections}.

\begin{conj}\label{section conjecture} (Grothendieck's section conjecture) 
All sections in $\sect_{\sim}(k,X)$ are geometric sections.
\end{conj}

Since in Nori's original definition the fundamental group scheme of a field is trivial, it is not possible to directly reformulate the section conjecture  in this setting. In \cite{EHai} and \cite{EHai2}, H. Esnault and P. H. Hai successfully used the language of fundamental groupoid scheme to get some arithmetic information from Nori's geometric fundamental group scheme, and using this they reformulated the section conjecture and proved the Packet conjecture. In \cite{BV}, N. Borne and A. Vistoli greatly generalized Nori's definition, and using the language of gerbes they also gave a reformulation of this conjecture. Here we would  like to suggest another thought on this conjecture.

\begin{lem} Let $X$ be a connected reduced scheme over a field $k$, and $\bar{x}\in X(\bar{k})$. Then we have a canonical map of sets $$\Delta:\sect^N(k,X)\longrightarrow\sect(k,X)$$ where $\sect^N(k,X)$ denotes the set of sections of the canonical surjection $\pi^N(X/k,\bar{x})\twoheadrightarrow\pi^N(k/k,\bar{x})$.
\end{lem}
\begin{proof} Suppose $f\in\sect^N(k,X)$ is a section.  Consider the following commutative diagram $$\xymatrix{\pi^N(k/k,\bar{x})\ar[r]^-f\ar@{>>}[d]^-{\pi_G^N}&\pi^N(X/k,\bar{x})\ar@{>>}[r]\ar@{>>}[d]^-{\pi_G^N}&\pi^N(k/k,\bar{x})\ar@{>>}[d]^-{\pi_G^N}\\\pi^G(k/k,\bar{x})\ar@{.>}[r]^-{\phi}&\pi^G(X/k,\bar{x})\ar@{>>}[r]&\pi^G(k/k,\bar{x})}$$
where $\phi$ is obtained by the universality of $\pi^N(k/k,\bar{x})\xrightarrow{\pi_G^N} \pi^G(k/k,\bar{x})$: For any homomorphism $\lambda:\pi^N(k/k,\bar{x})\to M$ where $M$ is a  pro-constant group scheme, there is a unique homomorphism $\delta:\pi^G(k/k,\bar{x})\to M$ such that $\delta\circ\pi_G^N=\lambda$. Clearly $\phi\in\sect(k,X)$. 
\end{proof}

\label{Nori section}Now denote $\sect_{\sim}^N(k,X)$ the image of the following composition. $$\sect^N(k,X)\xrightarrow{\Delta}\sect(k,X)\to \sect_{\sim}(k,X)$$
Then $\sect_{\sim}^N(k,X)\subseteq \sect_{\sim}(k,X)$ becomes a subset.

\begin{lem} The subset $\sect_{\sim}^N(k,X)$ contains all the geometric sections. 
\end{lem}
\begin{proof} Let $y\in X(k)$ be a rational point, and $\bar{y}\in X(\bar{k})$ be the composition of $y$ with $\Spec(\bar{k})\to\Spec(k)$. Then we have two fibre functors $\bar{F}_{\bar{x}}, \bar{F}_{\bar{y}}$ from the category of finite \'etale covers ${\rm Ecov}(\bar{X})$ to the category of sets and also the following  diagram of categories.
$$$$
$$\xymatrix{ {\rm Ecov}({X})\ar[r]\ar@/^3pc/[rr]^-{F_{\bar{x}}}\ar@/_3pc/[rr]_-{F_{\bar{y}}}& {\rm Ecov}(\bar{X})\ar@/^/[r]^-{\bar{F}_{\bar{x}}}\ar@/_/[r]_-{\bar{F}_{\bar{y}}}& (({\rm Sets}))}$$
$$$$
Now fix an isomorphism $\bar{F}_{\bar{x}}\xrightarrow{\cong} \bar{F}_{\bar{y}}$, it will then induce an isomorphism $\phi:{F}_{\bar{x}}\xrightarrow{\cong} {F}_{\bar{y}}$. Going through the proof of \ref{basepoint}, we get an isomorphism $\pi^N(X/k,\bar{x})\xrightarrow{\cong_{\phi}}\pi^N(X/k,\bar{y})$ which fits into the following commutative diagram. $$\xymatrix{\pi^N(X/k,\bar{x})\ar@{>>}[d]\ar[rr]^-{\cong_{\phi}}&&\pi^N(X/k,\bar{y})\ar@{>>}[d]\\\pi^G(X/k,\bar{x})\ar[rr]^-{\cong_{\phi}}&&\pi^G(X/k,\bar{y})}$$ This implies immediately that the geometric section $y_*$ comes from $\sect_{\sim}^N(k,X)$. 
\end{proof}

Thus $\sect_{\sim}^N(k,X)$ is a possibly smaller subset of $\sect_{\sim}(k,X)$,  and the section conjecture would immediately imply:

\begin{conj}\label{reform conj} If $X$ is a proper smooth and geometrically connected curve of genus $\geq 2$ over a  field $k$ finitely generated over $\Q$, then all sections in $\sect_{\sim}^N(k,X)$ are geometric sections.
\end{conj}

\begin{rmk}\label{char p} The above conjecture could also be formulated in the case when $k$ is a global field. If $k$ is of characteristic $p>0$, there is a little subtlety in the original formulation of the section conjecture, as there might be closed points in $X$ whose residue field are non-trivial purely inseparable extensions of $k$, and such points also contribute to sections of the fundamental exact sequence (because the Galois group is insensitive to purely inseparable extenisons). Thus in positive characteristic one may expect a smaller subset of sections which correspond to the rational points. In this situation, $\sect_{\sim}^N(k,X)$ might do this job as the Nori-Galois group does sensitive to purely inseparable extensions. There is another formualtion by F. Pop (see \href{https://www.math.upenn.edu/~pop/Research/files-Res/AnabPhen_20Dec10.pdf}{Anabelian Phenomena}, pp. 32), where he extended the set of rational points to all closed points whose residues fields are purely inseparable.
\end{rmk}

There is a more general philosophy behind this section conjecture, namely the anabelian conjecture:
\begin{conj} (Grothendieck's anabelian conjecture) Let $k$ be a field finitely generated over $\Q$. Let $X,Y$ be two (proper) anabelian schemes over $k$, and $\bar{x},\bar{y}$ are geometric points of $X$ and $Y$.  Then the natural map $${\Hom}_{\Sch/k}(X,Y)\to{\Hom}_{\Gal(k)}(\pi_1^{\text{\rm\'et}}(X,\bar{x}),\pi_1^{\text{\rm\'et}}(Y,\bar{y}))/{\rm Inn}(\pi_1^{\text{\rm\'et}}(\bar{Y},\bar{y}))$$
is bijective, where $\pi_1^{\text{\rm\'et}}(X,\bar{x})$ and $\pi_1^{\text{\rm\'et}}(Y,\bar{y})$ are viewed as groups over $\Gal(k)$ and the quotient is with respect to the action of $\pi_1^{\text{\rm\'et}}(\bar{Y},\bar{y})$ on the target via inner automorphisms.
\end{conj}

 Roughly speaking the conjecture predicts that there is a full subcategory of $k$-schemes, i.e. the conjectural \textit{anabelian scheme}, which are reconstrutible from their \'etale fundamental groups. If $X$ is taken to be the base field, then this is more or less just the section conjecture.
I my opinion, Nori's fundamental group scheme carries more information than the \'etale fundamental group, thus a scheme should be more reconstructible from its fundamental group scheme. To start with,  M. Romagny, G. Zalamansky and me,  we formulated the following conjecture which is an analog of the Neukirch-Uchida theorem (\cite[12.2.1, pp. 792]{NSW}) in the purely inseparable settings.

\begin{conj} Let $k=\bar{k}$ be an algebraically closed field.  Let $K/k$ be a field extension, and $\PI(K)$ be the category of finite purely inseparable extensions of $K$ whose morphisms are just $K$-algebra homomorphisms. Let $\text{Gr.Sch}_k/\pi^L(K/k)$ be the category of $k$-group schemes over the fixed $k$-group scheme $\pi^L{(K/k)}$, i.e. the category of $k$-homomorphisms from some $k$-group scheme to $\pi^L{(K/k)}$. Then the canonical contravariant functor
$$F: {\PI(K)} \longrightarrow {\text{Gr.Sch}}_k/{\pi^L(K/k)}$$
$$L/K \mapsto \pi^L(L/k)/\pi^L(K/k)$$
is fully faithful.
\end{conj}

Note that since here we   only consider  torsors under finite infinitesimal group schemes, by \ref{basepoint2}, the  fundamental group schemes are \textit{canonically} isomorphic when we choose different base points. Thus we don't need the quotient by the inner automorphisms to erase the effect brought by the choice of the base points.


\begin{thebibliography}{9999999}

\bibitem[AV]{AV} Gerard van der Geer, B. Moonen, \textit{Abelian Varieties}, preliminary version of the first chapters.
\bibitem[BLR]{BLR} S. Bosch, W.L\"utkebohmert, M.Raynaud, \textit{N\'eron Models}, Springer-Verlag Berlin Heidelberg, 1990.
\bibitem[BV]{BV} N. Borne, A. Vistoli, \textit{The Nori fundamental gerbe of a fibered category}, J. Algebraic Geometry, S 1056-3911, 00638-X, 2014.
\bibitem[Del]{Del} P. Deligne, \textit{Cat\'egories Tannakiannes}, The Grothendieck Festschrift, Vol. II, Progr. Math., vol. 87, Birkh\"auser Boston, Boston, MA, pp. 111-195, 1990.
\bibitem[DM]{DM} P. Deligne and J. S .Milne, \textit{Tannakian categories}, in \textit{Hodge Cycles, Motives, and Shimura Varieties} by P.Deligne, J.S.Milne, A.Ogus, K.Shih, Lecture Notes in Math. 900, Springer-Verlag, 1982, pp.414.
\bibitem[EH]{EH} H. Esnault, A. Hogadi, \textit{On the algebraic
fundamental group of smooth varieties in characteristic $p>
0$}, Trans. Amer. Math. Soc. 364, no. 5, 2429-2442, 2012.
\bibitem[EHai]{EHai} H. Esnault, P. H. Hai, \textit{The fundamental groupoid scheme and applications}, Annales de L'institut Fourier, Tome 58, $\text{n}^o$ 7, pp. 2381-2412, 2008.
\bibitem[EHai2]{EHai2} H. Esnault, P. H. Hai, \textit{Packets in Grothendieck's Section Conjecture}, Advances in Mathematics, 218,   pp. 395-416, Elsevier,  2008.
\bibitem[EHV]{EHV} H. Esnault, P. H. Hai,  E. Viehweg, \textit{On the
homotopy exact sequence for Nori's fundamental group}, \url{http://arxiv.org/abs/0908.0498}, 2010.
\bibitem[EGA]{EGA IV-1} A. Grothendieck, EGA IV: \textit{\'Etude locale des sch\'ema et des morphismes de sch\'emas}, Premi\`ere partie, Publications math\'ematiques de l'I.H.\'E.S., 20, 5-259, 1964.
\bibitem[EGA]{EGA IV-4} A. Grothendieck, EGA IV: \textit{ \'Etude locale des sch\'ema et des morphismes de sch\'emas}, Quatri\`eme partie, Publications math\'ematiques de l'I.H.\'E.S., 32, 5-361, 1967.
\bibitem[EPS]{EPS}  H. Esnault, P. H. Hai, X. Sun, \textit{On Nori's Fundamental Group Scheme}, Progress in Mathematics,
Vol.265, 366-398, Basel/Switzerland:  Birkh\"{a}user Verlag, 2007.
\bibitem[MS]{MS}V. B. Mehta, S. Subramanian, \textit{On the fundamental group scheme}, Invent. math. 148 , 143-150, 2002.
\bibitem[Nori]{Nori}Madhav V. Nori, \textit{The fundamental group schemes},
Proc.Indian Aacd.Sci. 91, 73-122, 1982.
\bibitem[Nori2]{Nori2}Madhav V. Nori, \textit{The fundamental group-scheme of an abelian variety},
Math.Ann. 263, 263-266, 1983.
\bibitem[NSW]{NSW} J. Neukirch, A. Schmidt, K. Wingberg, \textit{Cohomology of number fields}, Second Edition, Springer-Verlag, 2013.
\bibitem [Ro]{Ro} M. Romangny,  \textit{Le groupe fondamental du point},  \url{https://perso.univ-rennes1.fr/matthieu.romagny/exposes/groupe_fondamental_du_point.pdf}, 2014.
\bibitem[SGA1]{SGA1} A. Grothendieck, M. Raynaud, \textit{Rev\^{e}tements \'Etales
et Groupe Fondamental}, SGA 1, Springer-Verlag, 1971.
\bibitem[SGA3]{SGA3} M. Demazure, A. Grothendieck,  \textit{Sch\'ema en groupes}, Lecture notes in Mathematics, 151, 152, 153, Springer-Verlag, Berlin, 1970.
\bibitem[SGA4]{SGA4} M. Artin, A. Grothendieck, J-L. Verdier, \textit{Th\'eorie de Topos et Cohomologie Etale des Sch\'emas}, SGA 4, Springer-Verlag, 1963/64.
\bibitem[Sz]{Sz} T. Szamuely, \textit{Galois Groups and Fundamental Groups}, Cambridge Studies in Advanced Mathematics, Vol. 117, 2009.
\bibitem[Zh]{Zh} L.  Zhang, \textit{The Homotopy Sequence of Nori's Fundamental Group}, Journal of Algebra, Vol. 393, pp. 79–91, 2013.
\bibitem[Zh2]{Zh2} L.Zhang, \textit{The homotopy sequence of the algebraic fundamental group}, IMRN, doi: 10.1093/imrn/rnt163, 2013.
\end{thebibliography}
\end{document}